\def\thesection{\arabic{section}}
\def\theequation{\thesection.\arabic{equation}}
\newcommand{\fa} {\forall}
\newcommand{\ds} {\displaystyle}
\newcommand{\e}{\epsilon}
\newcommand{\al} {\alpha}
\newcommand{\ba} {\beta}
\newcommand{\de} {\delta}
\newcommand{\Om} {\Omega}
\newcommand{\ra} {\rightarrow}
\newcommand{\De} {\Delta}
\newcommand{\la} {\lambda}
\newcommand{\noi} {\noindent}
\newcommand{\oline} {\overline}
\newcommand{\mb} {\mathbb}
\newcommand{\mc} {\mathcal}
\newcommand{\lra} {\longrightarrow}
\newcommand{\ld} {\langle}
\newcommand{\rd} {\rangle}
\def\theequation{\@arabic{\c@section}.\@arabic{\c@equation}}
\def\proof{\noindent{\textbf{Proof. }}}
\def\QED{\hfill {$\square$}\goodbreak \medskip}
\newtheorem{Theorem}{Theorem}[section]
\newtheorem{Lemma}[Theorem]{Lemma}
\newtheorem{Corollary}[Theorem]{Corollary}
\newtheorem{Definition}[Theorem]{Definition}
\begin{document}
{\vspace{0.01in}}

\title
{Existence of multiple solutions of $p$-fractional Laplace operator
with sign-changing weight function}

\author{
{\bf  Sarika Goyal\footnote{email: sarika1.iitd@gmail.com}} and {\bf  K. Sreenadh\footnote{e-mail: sreenadh@gmail.com}}\\
{\small Department of Mathematics}, \\{\small Indian Institute of Technology Delhi}\\
{\small Hauz Khaz}, {\small New Delhi-16, India.}\\
 }

\date{}

\maketitle

\begin{abstract}

In this article, we study the following $p$-fractional Laplacian equation
 \begin{equation*}
 (P_{\la}) \left\{
\begin{array}{lr}
- 2\int_{\mb R^n}\frac{|u(y)-u(x)|^{p-2}(u(y)-u(x))}{|x-y|^{n+p\al}}
dy =  \la |u(x)|^{p-2}u(x) + b(x)|u(x)|^{\ba-2}u(x)\; \text{in}\;
\Om \\
 \quad \quad\quad\quad \quad\quad\quad\quad\quad \quad u = 0 \; \mbox{in}\; \mb R^n \setminus\Om,\quad u\in W^{\al,p}(\mb R^n).\\
\end{array}
\quad \right.
\end{equation*}
where $\Om$ is a bounded domain in $\mb R^n$ with smooth boundary,
$n> p\al$, $p\geq 2$, $\al\in(0,1)$, $\la>0$ and $b:\Om\subset\mb R^n \ra \mb R$ is a
sign-changing continuous function. We show the existence and multiplicity of non-negative solutions of $(P_{\la})$ with respect to the parameter $\la$, which changes according to whether $1<\ba<p$ or $p< \ba< p^{*}=\frac{np}{n-p\al}$ respectively. We discuss both the cases separately. Non-existence results are also obtained.
\medskip

\noi \textbf{Key words:} Non-local operator, fractional Laplacian,
sign-changing weight function, Nehari manifold.

\medskip

\noi \textit{2010 Mathematics Subject Classification:} 35A15, 35B33,
35H39
\end{abstract}

\bigskip
\vfill\eject

\section{Introduction}
\noi The aim of this article is to study the existence and multiplicity of non-negative solutions of following equation which is driven by the
non-local operator $\mc L_{K}$ as
\begin{equation}\label{eq01}
\left\{
\begin{array}{lr}
 -\mc L_{K}(u) =  \la |u(x)|^{p-2}u(x) + b(x)|u(x)|^{\ba-2}u(x)\; \text{in}\;
\Om \\
\quad\quad u = 0 \;\text{on}\; \mb R^n \setminus \Om,
\end{array}
\right.
\end{equation}
\noi where $\mc L_{K}$ is defined as
\[\mc L_K u(x)= 2 \int_{\mb R^n}|u(y)- u(x)|^{p-2} (u(y)-u(x))K(x-y)
dy\;\;\text{for\; all}\;\; x\in \mb R^n,\]
and $K :\mb R^n\setminus\{0\}\ra(0,\infty)$ satisfying:\\
$(a)\; mK \in L^1(\mb R^n),\;\text{where}\; m(x) = \min\{1, |x|^p\}$,\\
$(b)$ there exist $\theta>0$ and $\al\in(0,1)$ such that $K(x)\geq
\theta |x|^{-(n+p\al)},$\\
$(c)\; K(x) = K(-x)$ for any $ x\in \mb R^n\setminus\{0\}$.\\
Here $\Om$ is a bounded domain in $\mb R^n$ with smooth boundary,
$n> p\al$, $p\geq 2$, $\al\in(0,1)$, $\la>0$ and $b:\Om \ra \mb R$ is a
sign-changing continuous function.\\
\noi In particular, if $K(x) = |x|^{-(n+p\al)}$ then $\mc L_{K}$ becomes $p$-fractional Laplacian operator and is denoted by $(-\De)^{\al}_{p}$.

\noi Recently a lot of attention is given to the study of fractional and
non-local operators of elliptic type due to concrete real world
applications in finance, thin obstacle problem, optimization,
quasi-geostrophic flow etc. Dirichlet boundary value problem in case
of fractional Laplacian with polynomial type nonlinearity using
variational methods is recently studied in
\cite{tan,mp,var,weak,ls,yu}. Also existence and multiplicity
results for non-local operators with convex-concave type
nonlinearity is shown in \cite{mul}. In case of square root of
Laplacian, existence and multiplicity results for sublinear and superlinear type of nonlinearity with sign-changing
weight function is studied in \cite{yu}. In \cite{yu}, author used
the idea of Caffarelli and Silvestre \cite{cs}, which gives a
formulation of the fractional Laplacian through Dirichlet-Neumann
maps. Recently eigenvalue problem related to $p-$fractional Laplacian is studied in \cite{gf,pt}.

\noi For $\al=1$, a lot of work has been done for
multiplicity of positive solutions of semilinear elliptic problems
with positive nonlinearities \cite{ag, ABC, AAP, TA}. Moreover
multiplicity results with polynomial type nonlinearity with
sign-changing weight functions using Nehari manifold and fibering
map analysis is also studied in many papers ( see refs.\cite{TA,
GA,DP,WU,WU6,WU9,WU10,WUFI, COA}). In this work we use fibering map analysis and Nehari manifold approach to solve the problem
\eqref{eq01}. The approach is not new but the results that we obtained are new. Our work is motivated by the work of Servadei and Valdinoci \cite{mp}, Brown and Zhang \cite{bz} and Afrouzi et al. \cite{GA}.% We give the complete details of the proof.

\noi First we define the space
{\small\[X_0= \left\{u|\;u:\mb R^n \ra\mb R \;\text{is measurable},
u|_{\Om} \in L^p(\Om), \left(u(x)- u(y)\right)\sqrt[p]{K(x-y)}\in
L^p(Q), u=0\;\mbox{on}\;\mb R^n\setminus \Om \right\},\]}
\noi where $Q=\mb R^{2n}\setminus(\mc C\Om\times \mc C\Om)$. In the next section, we study the properties of the $X_0$ in details.
\begin{Definition}\label{def1}
 A function $u \in X_0$ is a weak solution of \eqref{eq01}, if $u$
satisfies
\begin{align}\label{eq03}
\int_{Q}|u(x)-u(y)|^{p-2}&(u(x)-u(y))(v(x)-v(y)) K(x-y)dxdy\notag\\
&\quad\quad = \la \int_{\Om} |u|^{p-2} u v dx +  \int_{\Om}
b(x)|u|^{\ba-2}u v dx\;\; \fa\;\; v\in X_0.
\end{align}
\end{Definition}
\noi We define the Euler function $J_{\la}: X_0 \ra \mb R$ associated to the
problem \eqref{eq01} as
\[J_{\la}(u)= \frac{1}{p}\int_{Q}|u(x)-u(y)|^p K(x-y) dxdy -\frac{\la}{p}\int_{\Om}|u|^{p} dx -\frac{1}{\ba}\int_{\Om} b(x) |u|^{\ba}.\]
 Then $J_{\la}$ is Fr$\acute{e}$chet differentiable in $ X_0$ and
\begin{align*}
\ld J_{\la}^{\prime}(u),v
\rd=\int_{Q}&|u(x)-u(y)|^{p-2}(u(x)-u(y))(v(x)-v(y)) K(x-y)dxdy\\
&\quad- \la \int_{\Om} |u|^{p-2} u v dx -  \int_{\Om}b(x)
|u|^{\ba-2}u v dx,
\end{align*}
which shows that the weak solutions of \eqref{eq01} are exactly the
critical points of the functional $J_{\la}$.

In order to state our main result, we introduce some notations. The Nehari Manifold $\mc N_{\la}$ is defined by
\begin{equation*}
\mc N_{\la}= \left\{u\in X_0: \int_{Q}|u(x)-u(y)|^{p} K(x-y)dxdy- \la \int_{\Om} |u|^{p} dx -  \int_{\Om}b(x)
|u|^{\ba} dx =0
\right\}
\end{equation*}
and $\mc N_{\la}^{-}$, $\mc N_{\la}^{+}$ and $\mc N_{\la}^{0}$ are subset of $\mc N_{\la}$ corresponding to local minima,
local maxima and points of inflection of the fiber maps $t\mapsto J_{\la}(tu)$. For more details refer Section 2. Now we state the main result.
\noi In $p-$sublinear case$(1<\ba<p)$, we first studies the existence result for problem \eqref{eq01} with $\la<\la_1$ and the asymptotic behavior of these solutions as $\la\ra\la^{-}_{1}$. We have the following Theorem:
\begin{Theorem}\label{sb1}
For every $\la<\la_1$, problem \eqref{eq01} possesses at least one non-negative solution which is a minimizer for $J_{\la}$ on $\mc N_{\la}^{+}$. Moreover, if $\int_{\Om} b(x)\phi_{1}^{\ba} dx >0$, then
\begin{enumerate}
\item[(i)] $\ds{\lim_{\la\ra \la_{1}^{-}}\inf_{u\in\mc N_{\la}^{+}}J_{\la}(u)= -\infty}$.
\item[(ii)] If $\la_k\ra \la_{1}^{-}$ and $u_k$ is a minimizer of $J_{\la_k}$ on $\mc N_{\la}^{+}$, then $\ds{\lim_{k\ra\infty}} \|u_{k}\|= +\infty$.
\end{enumerate}
\end{Theorem}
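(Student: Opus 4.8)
The plan is to reduce everything to the elementary calculus of the fiber maps $\gamma_u(t):=J_\la(tu)$, $t>0$. Writing $\|u\|^p=\int_Q|u(x)-u(y)|^pK(x-y)\,dxdy$, $B(u)=\int_\Om|u|^p$ and $C(u)=\int_\Om b(x)|u|^\ba$, one has $\gamma_u(t)=\frac{t^p}{p}(\|u\|^p-\la B(u))-\frac{t^\ba}{\ba}C(u)$, so $tu\in\mc N_\la$ iff $t^{p-1}(\|u\|^p-\la B(u))=t^{\ba-1}C(u)$. The decisive structural fact is that the variational characterization $\la_1=\inf_{u\neq0}\|u\|^p/B(u)$ forces $\|u\|^p-\la B(u)\geq(1-\la/\la_1)\|u\|^p>0$ for every $u\neq0$ whenever $\la<\la_1$. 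First I would use this, together with $1<\ba<p$, to show that $\gamma_u$ has a positive critical point precisely when $C(u)>0$, that this critical point $t^*(u)=\big(C(u)/(\|u\|^p-\la B(u))\big)^{1/(p-\ba)}$ is then the unique global minimum, and that $\gamma_u$ is strictly increasing when $C(u)\leq0$. Since on $\mc N_\la$ one computes $\gamma_u''(1)=(p-\ba)(\|u\|^p-\la B(u))>0$, this gives $\mc N_\la^0=\mc N_\la^-=\emptyset$ and $\mc N_\la=\mc N_\la^+$; moreover $\mc N_\la^+\neq\emptyset$ because the sign change of $b$ produces a test function with $C(u)>0$.

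For the existence assertion I would minimize $J_\la$ over $\mc N_\la^+$. On $\mc N_\la$ the constraint yields $J_\la(u)=\left(\frac1p-\frac1\ba\right)C(u)=\left(\frac1p-\frac1\ba\right)(\|u\|^p-\la B(u))<0$; combining $C(u)\leq\|b\|_\infty\|u\|_{L^\ba}^\ba\leq c\|u\|^\ba$ with $(1-\la/\la_1)\|u\|^p\leq C(u)$ bounds $\|u\|$ on $\mc N_\la^+$, so $m_\la:=\inf_{\mc N_\la^+}J_\la$ is finite and negative. Taking a minimizing sequence $u_k$, it is bounded, hence (along a subsequence) $u_k\rightharpoonup u$ in $X_0$ and, by the compact embeddings $X_0\hookrightarrow L^p(\Om),L^\ba(\Om)$ from Section 2, $B(u_k)\to B(u)$ and $C(u_k)\to C(u)$. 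Since $J_\la(u_k)\to m_\la<0$ forces $C(u)>0$, the limit is nonzero and admits a projection $t^*(u)u\in\mc N_\la^+$. The key step is to upgrade to strong convergence: if $\|u\|^p<\lim\|u_k\|^p=\la B(u)+C(u)$ then $\gamma_u'(1)=(\|u\|^p-\la B(u))-C(u)<0$, so $t^*(u)>1$ and $\gamma_u(t^*(u))<\gamma_u(1)=J_\la(u)\leq\liminf J_\la(u_k)=m_\la$, contradicting $J_\la(t^*(u)u)\geq m_\la$. Hence $u_k\to u$ strongly, $u\in\mc N_\la^+$ and $J_\la(u)=m_\la$; because $\mc N_\la^0=\emptyset$, a Lagrange-multiplier argument shows $u$ is a free critical point of $J_\la$, i.e. a weak solution. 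Replacing $u$ by $|u|$ (which lowers $\|\cdot\|^p$ while leaving $B,C$ fixed) and reprojecting yields a non-negative minimizer.

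For the asymptotics I would test with the first eigenfunction $\phi_1$, using $\|\phi_1\|^p=\la_1 B(\phi_1)$ together with the hypothesis $C(\phi_1)=\int_\Om b\phi_1^\ba>0$. Its projection onto $\mc N_\la^+$ sits at $t^*(\la)=\big(C(\phi_1)/((\la_1-\la)B(\phi_1))\big)^{1/(p-\ba)}\to+\infty$ as $\la\to\la_1^-$, and there $J_\la(t^*(\la)\phi_1)=\left(\frac1p-\frac1\ba\right)t^*(\la)^\ba C(\phi_1)\to-\infty$; since $m_\la\leq J_\la(t^*(\la)\phi_1)$, this gives (i). For (ii), if $u_k$ minimizes $J_{\la_k}$ on $\mc N_{\la_k}^+$ with $\la_k\to\la_1^-$, then by (i) $\left(\frac1p-\frac1\ba\right)(\|u_k\|^p-\la_k B(u_k))=J_{\la_k}(u_k)=m_{\la_k}\to-\infty$, whence $\|u_k\|^p\geq\|u_k\|^p-\la_k B(u_k)\to+\infty$ and so $\|u_k\|\to+\infty$.

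The main obstacle is the strong-convergence step: minimization only returns weak convergence, and one must exploit the sublinear fiber-map geometry (the strict inequality $t^*(u)>1$ in the hypothetical non-convergent case) together with the compact embedding to exclude loss of mass. Everything else reduces to the single-variable analysis of the maps $\gamma_u$ once the sign condition $\|u\|^p-\la B(u)>0$ is secured for $\la<\la_1$.
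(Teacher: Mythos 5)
Your proposal is correct, and its core — the fiber-map analysis, the boundedness of $\mc N_{\la}^{+}$, the minimizing-sequence argument with the strict-inequality/reprojection contradiction ($t^*(u)>1$ forces $J_\la(t^*(u)u)<m_\la$), the Lagrange-multiplier step, and the asymptotics obtained by projecting $\phi_1$ onto $\mc N_{\la}^{+}$ — is essentially the paper's proof (its Theorem 4.1, Lemma 4.2, Theorem 4.3(i), Theorem 4.6 and Corollary 4.7). Two genuine differences are worth noting. First, you specialize directly to $\la<\la_1$, where $E_{\la}^{-}=\emptyset$, whereas the paper routes everything through the abstract hypothesis $E_{\la}^{-}\subset B^{-}$ (vacuous for $\la<\la_1$); the paper's extra generality is not wasted, since the same machinery is reused verbatim for the multiplicity result on $\la_1<\la<\la_1+\de_1$ (Theorem 1.3), while your version is leaner but single-purpose. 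Second, and more substantively, your treatment of non-negativity differs: the paper introduces the truncated functional $J_{\la}^{+}$ with $f_{+}(x,t)=b(x)(t^{+})^{\ba-1}$, reruns the whole minimization for it, and then tests the critical point with $u^{-}$; you instead symmetrize the minimizer, using $\bigl||u(x)|-|u(y)|\bigr|\le|u(x)-u(y)|$ so that $J_\la(t|u|)\le J_\la(tu)$ for every $t>0$ while $B$ and $C$ are unchanged, hence the reprojection $t^*(|u|)\,|u|$ lies in $\mc N_{\la}^{+}$ and still attains $m_\la$. This is cleaner and avoids duplicating the existence theory, though you should spell out the one-line inequality $J_\la\bigl(t^*(|u|)\,|u|\bigr)\le\min_{t>0}J_\la(tu)=J_\la(u)=m_\la$ that makes "reprojecting" legitimate. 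One small caveat: your inequality $\|u\|^p-\la B(u)\ge(1-\la/\la_1)\|u\|^p$ is only valid for $0\le\la<\la_1$ (for $\la<0$ it would require $B(u)\ge\|u\|^p/\la_1$, which is backwards); for $\la\le 0$ the positivity of $E_\la(u)$ and the bound on $\mc N_\la^+$ hold trivially since $-\la B(u)\ge 0$, and the asymptotic statements only concern $\la$ near $\la_1>0$, so nothing breaks — but the constant should be stated as the paper does, via $\|u\|^p-\la B(u)\ge(\la_1-\la)B(u)$.
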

Now, we state the multiplicity results for $\la>\la_1$ and the asymptotic behavior for these solutions as $\la\ra\la_{1}^{+}$.

\begin{Theorem}\label{sb2}
Suppose $\int_{\Om} b(x)\phi_{1}^{\ba} dx<0$, then there exists $\de_1>0$ such that the problem \eqref{eq01} has at least two non-negative solutions whenever $\la_1<\la<\la_1+ \de_1$, the two solutions are minimizers of $J_{\la}(u)$ on $\mc N_{\la}^{+}$ and $\mc N_{\la}^{-}$ respectively. Moreover, we have:
\begin{enumerate}
\item[(i)] $\ds{\lim_{\la\ra \la_{1}^{+}}\inf_{u\in\mc N_{\la}^{-}}}J_{\la}(u)= +\infty$.
\item[(ii)] If $\la_k\ra \la_{1}^{+}$ and $u_k$ is a minimizer of $J_{\la_k}$ on $\mc N_{\la}^{-}$, then $\ds{\lim_{k\ra\infty} \|u_{k}\|= +\infty}$.
\end{enumerate}
\end{Theorem}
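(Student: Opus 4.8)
Since we are in the $p$-sublinear regime $1<\ba<p$, the plan is to run the fibering-map analysis of $J_\la$ on $\mc N_\la$ exactly as for $\la<\la_1$, but now exploiting that for $\la>\la_1$ the set $\mc N_\la^-$ becomes non-empty. Writing $\|u\|^p=\int_Q|u(x)-u(y)|^pK(x-y)\,dxdy$, $\phi_u(t)=J_\la(tu)$, and $A(u)=\|u\|^p-\la\int_\Om|u|^p$, $B(u)=\int_\Om b|u|^\ba$, one has $\phi_u'(t)=t^{p-1}A(u)-t^{\ba-1}B(u)$, so on $\mc N_\la$ (where $A(u)=B(u)$) the sign of $\phi_u''(1)=(p-\ba)A(u)$ decides membership: $u\in\mc N_\la^+$ iff $A(u)>0$ (whence $B(u)>0$) and $u\in\mc N_\la^-$ iff $A(u)<0$ (whence $B(u)<0$), using $1<\ba<p$. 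Since $J_\la(u)=(\tfrac1p-\tfrac1\ba)B(u)$ on $\mc N_\la$, the energy is negative on $\mc N_\la^+$ and positive on $\mc N_\la^-$, so the two minimizers, once produced, are automatically distinct. For a unit direction $v$ with $A(v)<0$, $B(v)<0$ there is a unique $t^-(v)v\in\mc N_\la^-$ realizing $\max_{t>0}J_\la(tv)$, and $\phi_1/\|\phi_1\|$ is such a direction because $A(\phi_1/\|\phi_1\|)=1-\la/\la_1<0$ and $B(\phi_1/\|\phi_1\|)=\|\phi_1\|^{-\ba}\int_\Om b\phi_1^\ba<0$ by hypothesis; hence $\mc N_\la^-\neq\emptyset$.

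The decisive step, and the one I expect to be the real obstacle, is a localization lemma: there exist $\de_1>0$, $c_0>0$, $\e_0>0$ such that for every $\la\in(\la_1,\la_1+\de_1)$ and every $v$ with $\|v\|=1$ and $\int_\Om|v|^p\ge(1-\e_0)/\la$ one has $\int_\Om b|v|^\ba\le-c_0$. I would prove this by contradiction: a sequence $v_k$ with $\|v_k\|=1$ and $\int_\Om|v_k|^p\ra 1/\la_1$ is a maximizing sequence for $\int_\Om|v|^p$ on the unit sphere, and by the compact embedding $X_0\hookrightarrow\hookrightarrow L^q(\Om)$ ($q<p^*$), weak lower semicontinuity of $\|\cdot\|$, and the characterization of $\la_1$ as a simple eigenvalue with positive eigenfunction $\phi_1$, it converges strongly to $\pm\phi_1/\|\phi_1\|$; continuity of $v\mapsto\int_\Om b|v|^\ba$ in $L^\ba$ and $\int_\Om b\phi_1^\ba<0$ then force the sign. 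The hardest part is marshalling the eigenvalue theory (simplicity of $\la_1$ and strong convergence of Rayleigh-minimizing sequences) for the non-local operator $\mc L_K$, which I would invoke from the available $p$-fractional eigenvalue results. This single lemma is what makes $\de_1$ appear, and it controls both manifolds at once: by contraposition $B(v)\ge0\Rightarrow A(v)>\e_0$, whereas $A(v)\le0\Rightarrow B(v)\le-c_0$.

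With the localization lemma in hand the two minimizations are routine. On $\mc N_\la^+$, every unit direction with $B(v)>0$ has $A(v)\ge\e_0$, so $\|t^+(v)v\|=(B(v)/A(v))^{1/(p-\ba)}$ is uniformly bounded; thus $\mc N_\la^+$ is bounded, $\inf_{\mc N_\la^+}J_\la=:c^+\in(-\infty,0)$, and a minimizing sequence converges weakly in $X_0$ and strongly in $L^p,L^\ba$ to a limit $u_1\neq0$ which, by weak lower semicontinuity, lies in $\mc N_\la^+$ and attains $c^+$. On $\mc N_\la^-$ I use $J_\la(t^-(v)v)=(\tfrac1\ba-\tfrac1p)|B(v)|^{p/(p-\ba)}|A(v)|^{-\ba/(p-\ba)}$ with $c_0\le|B(v)|\le C$ and $|A(v)|\le\la/\la_1-1$ to see first that $c^-:=\inf_{\mc N_\la^-}J_\la\ge(\tfrac1\ba-\tfrac1p)c_0^{p/(p-\ba)}(\la/\la_1-1)^{-\ba/(p-\ba)}>0$, and second that along a minimizing sequence $|A(v_k)|$ stays bounded away from $0$, so $\|u_k\|$ remains in a compact subinterval of $(0,\infty)$. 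Extracting weak and strong limits and comparing $J_\la(t^-(u_0)u_k)\le J_\la(u_k)$ across the limit yields a minimizer $u_2\in\mc N_\la^-$ with $J_\la(u_2)=c^->0$. In both cases replacing $u$ by $|u|$ does not increase $\|u\|$ and leaves the lower-order terms unchanged, so re-projecting (and using the localization lemma to remain in the correct component) gives $u_1,u_2\ge0$. Since $\mc N_\la^0=\{0\}$ (immediate from the lemma, as $A(v)=B(v)=0$ is impossible), the Lagrange-multiplier relation evaluated on $u_i$ reads $0=\mu\,(p-\ba)A(u_i)$ with $A(u_i)\neq0$, forcing $\mu=0$, so $u_1,u_2$ are genuine weak solutions; as $c^+<0<c^-$ they are distinct.

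Finally, the asymptotics fall out of the same estimates. Part (i) is exactly the bound $\inf_{\mc N_\la^-}J_\la\ge(\tfrac1\ba-\tfrac1p)c_0^{p/(p-\ba)}(\la/\la_1-1)^{-\ba/(p-\ba)}$, whose right-hand side tends to $+\infty$ as $\la\ra\la_1^+$. For part (ii), if $u_k\in\mc N_{\la_k}^-$ minimizes and $\la_k\ra\la_1^+$, then writing $u_k=t^-(v_k)v_k$ with $\|v_k\|=1$ gives $\|u_k\|=(|B(v_k)|/|A(v_k)|)^{1/(p-\ba)}\ge(c_0/(\la_k/\la_1-1))^{1/(p-\ba)}\ra+\infty$, since $|A(v_k)|\le\la_k/\la_1-1\ra0$ while $|B(v_k)|\ge c_0$. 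Thus the only genuinely new ingredient beyond the $\la<\la_1$ analysis is the uniform localization near $\phi_1$, and everything else is the standard Nehari/fibering machinery.
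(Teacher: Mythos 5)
Your proposal is correct, and for the existence and asymptotic statements it follows the paper's route: your localization lemma is (a mild strengthening of) the paper's Lemma \ref{les3}, proved by the same contradiction/compactness argument using the simplicity of $\la_1$ and $\int_\Om b(x)\phi_1^\ba\,dx<0$; the two minimizations are the paper's Theorem \ref{t3}, which feeds Lemma \ref{les3} into the general machinery of Theorem \ref{tt1}, Lemma \ref{le1} and Theorem \ref{t2}; and your bound $\inf_{u\in\mc N_\la^-}J_\la(u)\ge\bigl(\tfrac1\ba-\tfrac1p\bigr)c_0^{p/(p-\ba)}\bigl(\la/\la_1-1\bigr)^{-\ba/(p-\ba)}$ is exactly the paper's Lemma \ref{t5} (your exponent $p/(p-\ba)$ on $|B|$ is in fact the correct one; the paper's display carries a harmless typo there). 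Two genuine differences are worth recording. First, you build the margin into the key lemma itself ($A(v)\le\e_0\Rightarrow B(v)\le-c_0$ on the unit sphere), whereas the paper states the lemma only for $v\in E_\la^-$ and recovers the margin indirectly, by exploiting that its hypothesis holds for a whole interval of parameters and comparing $\la$ with a larger $\mu$ (Theorem \ref{tt1}(1)); as a consequence the paper must prove boundedness of $\mc N_\la^+$, closedness of $\mc N_\la^-$, and $0\notin\overline{\mc N_\la^-}$ by separate contradiction/compactness arguments, while your margin makes these facts immediate and quantitative (e.g. $\|t^+(v)v\|\le(C/\e_0)^{1/(p-\ba)}$). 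Second, and more substantively, your non-negativity argument differs from the paper's: the paper introduces the truncated functional $J_\la^+$ with nonlinearity $f_+$, reruns the whole existence theory for it, and shows its critical points satisfy $u^-\equiv 0$ by testing the equation with $u^-$; you instead take the minimizer $u$, pass to $|u|$ using the pointwise inequality $\bigl||u(x)|-|u(y)|\bigr|\le|u(x)-u(y)|$ (so $\bigl\|\,|u|\,\bigr\|\le\|u\|$ while $B(|u|)=B(u)$), and re-project onto the Nehari set, using monotonicity of the fibering energy in $A$ and the localization lemma (needed only in the $\mc N_\la^+$ case, to keep $A>0$) to conclude the projection is again a minimizer in the same component. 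This is more elementary and avoids a second pass through the existence theory. The only glossed step is on $\mc N_\la^+$, where "by weak lower semicontinuity the limit lies in $\mc N_\la^+$" is not literally lower semicontinuity alone: if $A(u_1)<B(u_1)$ one needs the projection comparison $t(u_1)>1$, $J_\la(t(u_1)u_1)<c^+$ to reach a contradiction — the same device you spell out on $\mc N_\la^-$ and that the paper uses in Theorem \ref{t2}; since that mechanism is present in your write-up, this is a gloss, not a gap.
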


\noi Next, we study the $p-$superlinear case$(p<\ba<p^*)$, in which we first study the existence result for problem \eqref{eq01} with $\la<\la_1$ and the asymptotic behavior of these solutions as $\la\ra\la^{-}_{1}$. We have the following Theorem:
\begin{Theorem}\label{sp1}
For every $\la<\la_1$, problem \eqref{eq01} possesses at least one non-negative solution which is a minimizer for $J_{\la}$ on $\mc N_{\la}^{-}$. Moreover, if $\int_{\Om} b(x)\phi_{1}^{\ba} dx >0$, then
\begin{enumerate}
\item[(i)] $\ds{\lim_{\la\ra \la_{1}^{-}}\inf_{u\in\mc N_{\la}^{-}}J_{\la}(u)=0}$.
\item[(ii)] If $\la_k\ra \la_{1}^{-}$ and $u_k$ is a minimizer of $J_{\la_k}$ on $\mc N_{\la}^{-}$, then $\ds{\lim_{k\ra\infty} u_{k}=0}$.
\end{enumerate}
\end{Theorem}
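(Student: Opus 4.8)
The plan is to run a fibering-map analysis on $\mc N_{\la}^{-}$, tailored to the $p$-superlinear regime $p<\ba<p^*$. For $u\in X_0\setminus\{0\}$ write $\|u\|^p=\int_{Q}|u(x)-u(y)|^p K(x-y)\,dxdy$ and set $A_{\la}(u)=\|u\|^p-\la\int_{\Om}|u|^p\,dx$, $B(u)=\int_{\Om}b|u|^{\ba}\,dx$. The fiber map $\varphi_u(t)=J_{\la}(tu)$ satisfies $\varphi_u'(t)=t^{p-1}A_{\la}(u)-t^{\ba-1}B(u)$, so $tu\in\mc N_{\la}$ iff $t^{\ba-p}=A_{\la}(u)/B(u)$. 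Since $\la<\la_1$, the variational characterization of $\la_1$ gives $A_{\la}(u)\geq(1-\la/\la_1)\|u\|^p>0$ for all $u\neq0$; hence a positive root exists exactly when $B(u)>0$, it is unique, and $\varphi_u''(t)=t^{p-2}A_{\la}(u)(p-\ba)<0$ there, so the unique projection lands on $\mc N_{\la}^{-}$ and $\mc N_{\la}^{0}=\emptyset$. As $b$ is sign-changing one can pick $u$ with $B(u)>0$, so $\mc N_{\la}^{-}\neq\emptyset$; and on $\mc N_{\la}$ one has $A_{\la}(u)=B(u)$, whence $J_{\la}(u)=(\frac1p-\frac1\ba)A_{\la}(u)>0$.

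Next I would show $c_{\la}=\inf_{\mc N_{\la}^{-}}J_{\la}$ is attained. Combining the continuous embedding $X_0\hookrightarrow L^{\ba}(\Om)$ with $A_{\la}(u)=B(u)\leq C\|b\|_{\infty}\|u\|^{\ba}$ and $A_{\la}(u)\geq(1-\la/\la_1)\|u\|^p$ yields a uniform lower bound $\|u\|\geq\rho>0$ on $\mc N_{\la}^{-}$, so $c_{\la}>0$; the same identity makes $J_{\la}$ coercive on $\mc N_{\la}^{-}$, so a minimizing sequence $\{u_k\}$ is bounded. Up to a subsequence $u_k\rightharpoonup u_0$ in $X_0$ and, by the compact embedding $X_0\hookrightarrow\hookrightarrow L^{\ba}(\Om)$ (valid since $\ba<p^*$), strongly in $L^p$ and $L^{\ba}$, so $B(u_k)\to B(u_0)$. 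Since $c_{\la}=(\frac1p-\frac1\ba)\lim B(u_k)>0$ we get $B(u_0)>0$, the projection $t_0u_0\in\mc N_{\la}^{-}$ is defined, and weak lower semicontinuity gives $A_{\la}(u_0)\leq B(u_0)$, i.e. $t_0\leq1$. Then $J_{\la}(t_0u_0)=t_0^{\ba}c_{\la}\leq c_{\la}$ while $t_0u_0\in\mc N_{\la}^{-}$ forces $J_{\la}(t_0u_0)\geq c_{\la}$; hence $t_0=1$, $u_0\in\mc N_{\la}^{-}$ and $J_{\la}(u_0)=c_{\la}$. Finally $A_{\la}(u_0)=\lim A_{\la}(u_k)$ together with $L^p$-convergence gives $\|u_k\|\to\|u_0\|$, and uniform convexity of $X_0$ upgrades this to strong convergence.

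To produce a non-negative solution I would replace $u_0$ by $|u_0|$. Since $\||u_0|\|\leq\|u_0\|$ and the lower-order terms are unchanged, $A_{\la}(|u_0|)\leq A_{\la}(u_0)=B(u_0)=B(|u_0|)$, so the projection parameter of $|u_0|$ is $\leq1$; the same squeezing forces it to equal $1$, giving $|u_0|\in\mc N_{\la}^{-}$ with $J_{\la}(|u_0|)=c_{\la}$. Because $\mc N_{\la}^{0}=\emptyset$, the usual Lagrange-multiplier argument on the Nehari manifold shows any minimizer is a free critical point of $J_{\la}$, hence a non-negative weak solution of \eqref{eq01}.

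For the asymptotics assume $\int_{\Om}b\phi_{1}^{\ba}\,dx>0$. Testing with the first eigenfunction, $A_{\la}(\phi_1)=(\la_1-\la)\int_{\Om}\phi_1^{p}\,dx$ and $B(\phi_1)=\int_{\Om}b\phi_1^{\ba}>0$, so $t_{\la}=(A_{\la}(\phi_1)/B(\phi_1))^{1/(\ba-p)}\to0$ as $\la\ra\la_1^{-}$; since $0<c_{\la}\leq J_{\la}(t_{\la}\phi_1)=(\frac1p-\frac1\ba)t_{\la}^{\ba}B(\phi_1)\to0$, this proves (i). For (ii), let $\la_k\ra\la_1^{-}$ and $u_k$ minimize $J_{\la_k}$ on $\mc N_{\la_k}^{-}$, so $(\frac1p-\frac1\ba)A_{\la_k}(u_k)=c_{\la_k}\to0$ and $B(u_k)=A_{\la_k}(u_k)\to0$. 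If $\|u_k\|\not\to0$, then along a subsequence $\|u_k\|\geq\de>0$; setting $v_k=u_k/\|u_k\|$, the relation $A_{\la_k}(u_k)\to0$ forces $\la_k\int_{\Om}|v_k|^p\to1$, and after extracting a weakly convergent subsequence the equality case of the $\la_1$-characterization makes $v_k$ converge strongly to a normalized first eigenfunction $v_0$ with $|v_0|=\phi_1/\|\phi_1\|$. Then $\int_{\Om}b|v_0|^{\ba}=\|\phi_1\|^{-\ba}\int_{\Om}b\phi_1^{\ba}>0$, so $B(u_k)=\|u_k\|^{\ba}\int_{\Om}b|v_k|^{\ba}$ stays bounded away from $0$, contradicting $B(u_k)\to0$; hence $u_k\ra0$. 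The main obstacle throughout is the compactness argument and the squeezing that pins the weak limit exactly onto $\mc N_{\la}^{-}$; in (ii) the delicate point is that the lower bound $\rho$ degenerates as $\la\ra\la_1^{-}$, which is precisely why the rescaling $v_k$ and the eigenfunction limit are indispensable.
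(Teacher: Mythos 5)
Your proposal is correct, and at the top level it follows the same route as the paper (Lemma \ref{t31} together with Theorem \ref{t41}): minimize $J_{\la}$ on $\mc N_{\la}^{-}$, get compactness from the compact embedding into $L^{\ba}(\Om)$, and drive both asymptotic statements with the projection $t(\phi_1)\phi_1$ --- your part (i) is essentially the paper's computation verbatim. Three sub-arguments genuinely differ, and all three of yours hold up. (a) For attainment of $c_{\la}$, the paper argues by contradiction: if $u_k\not\ra u_0$ strongly then the projection parameter satisfies $t(u_0)<1$ and, since each fiber map $\phi_{u_k}$ peaks at $t=1$, one gets $J_{\la}(t(u_0)u_0)<\liminf_{k} J_{\la}(t(u_0)u_k)\le\lim_{k} J_{\la}(u_k)=c_{\la}$, which is impossible. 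Your squeezing argument ($t_0\le 1$ by weak lower semicontinuity, then $t_0^{\ba}c_{\la}=J_{\la}(t_0u_0)\ge c_{\la}$ forces $t_0=1$) lands the weak limit on $\mc N_{\la}^{-}$ directly, without comparing values along the sequence; your subsequent upgrade to strong convergence needs the Radon--Riesz property of $X_0$ (uniform convexity, via the isometric embedding $u\mapsto u(x)-u(y)$ of $X_0$ into $L^p(Q,K\,dx\,dy)$), a fact the paper also uses silently every time it asserts that failure of strong convergence yields a strict norm inequality. (b) For non-negativity the paper reruns the whole theory for the truncated functional $J^{+}_{\la}$ and tests with $u^{-}$; your replacement of $u_0$ by $|u_0|$, using $\||u_0|\|\le\|u_0\|$ and the same squeezing, is a different and shorter device, and it is legitimate because $B(|u_0|)=B(u_0)>0$ keeps $|u_0|$ off $\mc N_{\la}^{0}$, so the paper's Lemma \ref{le10} still makes it a free critical point. (c) In part (ii) the paper proves boundedness of $\{u_k\}$ first (normalize, show $v_k\ra v_0=k\phi_1$ strongly, then $\int_{\Om}b|v_0|^{\ba}dx=0$ forces $k=0$, contradicting $\|v_0\|=1$) and then repeats the argument on $u_k$ itself to get $u_0=0$; you instead run a single contradiction from $\|u_k\|\ge\de$ along a subsequence: since $A_{\la_k}(u_k)=\|u_k\|^p\bigl(1-\la_k\int_{\Om}|v_k|^p\,dx\bigr)\ra 0$ with a positive bracket, you obtain $\la_k\int_{\Om}|v_k|^p\,dx\ra 1$ whether or not $\|u_k\|$ blows up, identify the strong limit of $v_k$ as a normalized first eigenfunction, and then $\int_{\Om}b|v_k|^{\ba}\,dx$ stays bounded away from $0$ while $B(u_k)/\|u_k\|^{\ba}\ra 0$. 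The contradiction mechanism is the same as the paper's (in both, the hypothesis $\int_{\Om}b\,\phi_1^{\ba}\,dx>0$ is what rules out the eigenfunction limit), but your packaging handles bounded and unbounded subsequences in one stroke, at the price of leaning once more on Radon--Riesz to pass from $\|v_k\|\ra\|v_0\|$ to strong convergence.
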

Next, we state the multiplicity result for $\la>\la_1$ and the asymptotic behavior for these solutions as $\la\ra\la_{1}^{+}$.

\begin{Theorem}\label{sp2}
Suppose $\int_{\Om} b(x)\phi_{1}^{\ba} dx<0$, then there exists $\de_1>0$ such that the problem \eqref{eq01} has at least two non-negative solutions whenever $\la_1<\la<\la_1+ \de_1$, the two solutions are minimizers of $J_{\la}(u)$ on $\mc N_{\la}^{+}$ and $\mc N_{\la}^{-}$ respectively. Moreover, let $u_k$ be minimizer of $J_{\la_{k}}$ on $\mc N_{\la}^{+}$ with $\la_{k}\ra \la_{1}^{+}$, then
\begin{enumerate}
\item[(i)] $u_{k}\ra 0$ as $k\ra\infty$.
\item[(ii)] $\frac{u_k}{\|u_k\|}\ra \phi_1$ in $X_0$ as $k\ra\infty$.
\end{enumerate}
\end{Theorem}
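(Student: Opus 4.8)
The plan is to produce the two solutions as minimizers of $J_\la$ on the two pieces $\mc N_\la^{+}$ and $\mc N_\la^{-}$ of the Nehari manifold, and then to read off the asymptotics (i)--(ii) directly from membership in $\mc N_\la^{+}$. Write $\|u\|^p=\int_Q|u(x)-u(y)|^pK(x-y)\,dxdy$ and set $A(u)=\|u\|^p-\la\int_\Om|u|^p\,dx$. The fibering analysis of Section~2 gives, on $\mc N_\la$, the identities $A(u)=\int_\Om b|u|^\ba\,dx$ and $J_\la(u)=\left(\frac1p-\frac1\ba\right)A(u)$, with $\mc N_\la^{+}=\{A(u)<0\}$ and $\mc N_\la^{-}=\{A(u)>0\}$ (here $\ba>p$ is essential). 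For $\la>\la_1$ the direction $\phi_1$ has $A(t\phi_1)/t^p=\la_1-\la<0$ and $\int_\Om b\phi_1^\ba<0$, so its fiber has an interior minimum and $\mc N_\la^{+}\neq\emptyset$; taking instead a $v$ concentrated where $b>0$ with high Rayleigh quotient gives $A(v)>0$ and $\int_\Om b|v|^\ba>0$, so $\mc N_\la^{-}\neq\emptyset$.

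The crucial point is to control $\mc N_\la^{+}$ for $\la$ near $\la_1$. Writing $u=rv$ with $\|v\|=1$, the condition $A(u)<0$ forces $\int_\Om|v|^p>1/\la$; since $\int_\Om|v|^p\le1/\la_1$ always, the admissible directions are squeezed into the near-maximal set $\{\int_\Om|v|^p\ge1/\la\}$, which as $\la\ra\la_1^{+}$ shrinks to $\{\pm\phi_1\}$ by the compact embedding $X_0\hookrightarrow L^p(\Om)$ and the simplicity of $\la_1$ (Rayleigh-maximizing sequences converge). Hence I can fix $\de_1>0$ and a small neighbourhood $U$ of $\pm\phi_1$ so that, for $\la_1<\la<\la_1+\de_1$, every unit $v$ with $A(v)\le0$ lies in $U$, where continuity and $\int_\Om b\phi_1^\ba<0$ give $\int_\Om b|v|^\ba\le-c<0$. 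On $\mc N_\la^{+}$ the scale is then $r^{\ba-p}=A(v)/\int_\Om b|v|^\ba$ with numerator in $[\,1-\la/\la_1,0)$ and denominator $\le-c$, so $\mc N_\la^{+}$ is bounded and $m^{+}:=\inf_{\mc N_\la^{+}}J_\la\in(-\infty,0)$. A bounded minimizing sequence converges weakly; the subcritical compact embeddings into $L^p$ and $L^\ba$ make the $\la$-term and the $b$-term continuous, so the Nehari relation forces $\|u_k\|^p$ to converge to the norm of the weak limit, upgrading weak to strong convergence in the uniformly convex space $X_0$ ($p\ge2$). Thus $m^{+}$ is attained at some $u^{+}\in\mc N_\la^{+}$.

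For $\mc N_\la^{-}$ the same neighbourhood argument works in reverse: membership requires $\int_\Om b|v|^\ba>0$, which excludes $U$, so $\mc N_\la^{-}$-directions keep Rayleigh quotient $\ge\la_1+\eta$ for some $\eta>0$ (again by compactness and simplicity of $\la_1$), and shrinking $\de_1$ so that $\la<\la_1+\eta$ yields $A(v)\ge1-\la/(\la_1+\eta)>0$ bounded below. This bounds $r=\|u\|$ away from $0$ on $\mc N_\la^{-}$, so $m^{-}:=\inf_{\mc N_\la^{-}}J_\la>0$; a minimizing sequence is bounded (small $J_\la$ keeps $r^pA(v)$ bounded above) and the compact embedding $X_0\hookrightarrow\hookrightarrow L^\ba(\Om)$ delivers a minimizer $u^{-}\in\mc N_\la^{-}$ as before. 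Since the choice of $\de_1$ also forces $\mc N_\la^{0}=\{0\}$ (on $\{A=0\}$ one is inside $U$, where $\int_\Om b|v|^\ba<0\neq0$), each minimizer is an unconstrained critical point: the Lagrange multiplier vanishes precisely because $A(u)\neq0$ off $\mc N_\la^{0}$. Replacing $u$ by $|u|$ leaves $J_\la$ unchanged (the kernel is even and $||u(x)|-|u(y)||\le|u(x)-u(y)|$) and preserves the piece of $\mc N_\la$, so both solutions may be taken non-negative; as $J_\la(u^{+})<0<J_\la(u^{-})$ they are distinct.

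Finally, (i)--(ii) follow without re-minimizing. If $u_k=r_kv_k\in\mc N_{\la_k}^{+}$ with $\|v_k\|=1$ and $\la_k\ra\la_1^{+}$, the squeezing gives $\int_\Om|v_k|^p\ra1/\la_1$, so $v_k$ is Rayleigh-maximizing and $v_k\ra\phi_1$ strongly in $X_0$, which is (ii). Then $A(v_k)=1-\la_k\int_\Om|v_k|^p\ra0$ while $\int_\Om b|v_k|^\ba\ra\int_\Om b\phi_1^\ba<0$ stays bounded away from $0$, whence $r_k^{\ba-p}=A(v_k)/\int_\Om b|v_k|^\ba\ra0$ and $\|u_k\|=r_k\ra0$, which is (i). The main obstacle is exactly the below-boundedness and boundedness of $\mc N_\la^{+}$: without the hypothesis $\int_\Om b\phi_1^\ba<0$ and the restriction $\la<\la_1+\de_1$, the admissible directions need not cluster at $\phi_1$, the denominator $\int_\Om b|v|^\ba$ could vanish, and $\inf_{\mc N_\la^{+}}J_\la$ could fall to $-\infty$; pinning down $\de_1$ through the compactness of Rayleigh-maximizing sequences is the technical heart of the whole argument, and it simultaneously controls $\mc N_\la^{-}$ and kills $\mc N_\la^{0}$.
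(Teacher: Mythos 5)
Your route is essentially the paper's route: your ``squeezing'' statement (for $\la_1<\la<\la_1+\de_1$ every unit direction $v$ with $A(v)\le 0$ lies near $\pm\phi_1$, where $\int_\Om b|v|^\ba\le -c<0$, proved by compactness of Rayleigh-maximizing sequences and simplicity of $\la_1$) is exactly the paper's Lemma \ref{le3} ($\overline{E_\la^-}\cap\overline{B^+}=\emptyset$ near $\la_1^+$); from it you deduce, as the paper does in Theorem \ref{t1}, boundedness of $\mc N_\la^{+}$, $\mc N_\la^{0}=\{0\}$, the Lagrange-multiplier step, and then the asymptotics of Theorem \ref{tt4}. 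Your explicit use of the scale formula $r^{\ba-p}=A(v)/\int_\Om b|v|^\ba\,dx$ is a nice quantitative streamlining, but the skeleton is the same.

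There is, however, a genuine gap at the compactness step, which is the technical core of both existence claims. You assert that the compact embeddings plus ``the Nehari relation forces $\|u_k\|^p$ to converge to the norm of the weak limit.'' What the Nehari identity along the minimizing sequence actually gives is $\|u_k\|^p\to \la\int_\Om|u_0|^p\,dx+\int_\Om b|u_0|^\ba\,dx$; identifying this limit with $\|u_0\|^p$ presupposes that the weak limit $u_0$ itself satisfies the Nehari identity, i.e.\ that no mass is lost in the limit --- which is precisely what has to be proved. A priori one only has the weak lower semicontinuity inequality $\|u_0\|^p\le\lim_{k}\|u_k\|^p$, possibly strict. The paper closes this hole (Theorems \ref{tt2} and \ref{ttt1}) with a fibering-map comparison: if the inequality were strict, then $A(u_0)<\int_\Om b|u_0|^\ba\,dx<0$, so the projection $t(u_0)u_0\in\mc N_\la^{+}$ has $t(u_0)>1$ and
\begin{equation*}
J_\la\bigl(t(u_0)u_0\bigr)< J_\la(u_0)<\liminf_{k\ra\infty}J_\la(u_k)=\inf_{u\in\mc N_\la^{+}}J_\la(u),
\end{equation*}
a contradiction (and an analogous comparison with $t(u_0)<1$ on $\mc N_\la^{-}$). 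Your proof needs this, or an equivalent argument, inserted at both minimization steps; without it the existence of minimizers is not established. A secondary slip: replacing $u$ by $|u|$ does \emph{not} leave $J_\la$ unchanged, nor does it preserve membership in $\mc N_\la$, since $\||u|\|$ can be strictly smaller than $\|u\|$; one must re-project, i.e.\ pass to $t(|u|)\,|u|$ and use $J_\la\bigl(t(|u|)\,|u|\bigr)\le J_\la(u)$ (the paper instead truncates the nonlinearity via $J_\la^{+}$ and tests with $u^{-}$).
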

We should remark that the assumption $\int_{\Om} b(x)\phi_{1}^{\ba} dx<0$ is necessary for obtaining the existence result for problem \eqref{eq01}. In fact, the following theorem shows that we can't get a non-trivial solution by looking for minimizer of $J_{\la}$ on $\mc N_{\la}^{-}$ when $\int_{\Om} b(x)\phi_{1}^{\ba} dx>0$.
\begin{Theorem}\label{sp3}
Suppose $\int_{\Om} b(x)\phi_{1}^{\ba} dx>0$, then $\ds{\inf_{u\in \mc N_{\la}^{-}} J_{\la}(u)=0}$ for all $\la>\la_1$.
\end{Theorem}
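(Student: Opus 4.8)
The plan is to first record the value of $J_\la$ on the Nehari manifold and then build an explicit minimizing ray whose energy tends to $0$. For $u\in\mc N_\la$ one has $\int_Q|u(x)-u(y)|^pK(x-y)\,dxdy-\la\int_\Om|u|^p\,dx=\int_\Om b|u|^\ba\,dx$; writing $S(u):=\int_Q|u(x)-u(y)|^pK(x-y)\,dxdy-\la\int_\Om|u|^p\,dx$ and substituting into $J_\la$ gives $J_\la(u)=\left(\frac1p-\frac1\ba\right)S(u)$. The fiber-map computation of Section~2 shows that on $\mc N_\la^-$ one has $S(u)>0$ (this is exactly the condition $\phi_u''(1)=(p-\ba)S(u)<0$), so, since $\ba>p$, $J_\la>0$ on $\mc N_\la^-$ and therefore $\ds\inf_{\mc N_\la^-}J_\la\ge 0$. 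It remains to produce points of $\mc N_\la^-$ with arbitrarily small energy.

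For this I use that every direction $u$ with $S(u)>0$ and $C(u):=\int_\Om b|u|^\ba\,dx>0$ is carried by its fiber map to a unique point $t_uu\in\mc N_\la^-$, and a direct computation (the scale-invariant form of the fiber map) gives
\[J_\la(t_uu)=\left(\frac1p-\frac1\ba\right)\frac{S(u)^{\ba/(\ba-p)}}{C(u)^{p/(\ba-p)}}.\]
Thus it suffices to exhibit a family $u_s$ with $C(u_s)$ bounded below by a positive constant and $S(u_s)\to 0^+$.

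I would construct this family by perturbing $\phi_1$ in the region where $b>0$. Since $\int_\Om b\phi_1^\ba\,dx>0$, $\phi_1>0$ and $b$ is continuous, there are $x_0\in\Om$ and $r>0$ with $b>0$ on $\ov{B_r(x_0)}\subset\Om$ and $c_0:=\int_{\Om\setminus B_r(x_0)}b\phi_1^\ba\,dx>0$. Choosing $r$ small I take a rescaled nonnegative bump $w\ge0$ supported in $B_r(x_0)$ whose Rayleigh quotient $\|w\|^p/\int_\Om|w|^p$ is so large that $S(w)>0$; such $w$ exists because this quotient blows up like $r^{-p\al}$ under concentration (using the bounds $(a)$ and $(b)$ on $K$). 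Set $u_s=\phi_1+sw$, $s\ge0$. On $B_r(x_0)$ both $b>0$ and $\phi_1+sw\ge0$, while off $B_r(x_0)$ the integrand equals $b\phi_1^\ba$, so
\[C(u_s)=\int_{\Om\setminus B_r(x_0)}b\phi_1^\ba\,dx+\int_{B_r(x_0)}b(\phi_1+sw)^\ba\,dx\ge c_0>0\qquad\text{for all }s\ge0.\]

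Finally I control $S(u_s)$. It is continuous in $s$, $S(u_0)=(\la_1-\la)\int_\Om\phi_1^p\,dx<0$ because $\la>\la_1$, and by $p$-homogeneity $S(u_s)/s^p\to S(w)>0$, so $S(u_s)\to+\infty$. By the intermediate value theorem $S$ vanishes at some first $s_*$, and for $s\downarrow s_*$ we get $S(u_s)\to0^+$ while $C(u_s)\ge c_0$. Hence $t_{u_s}u_s\in\mc N_\la^-$ and
\[0<J_\la(t_{u_s}u_s)=\left(\frac1p-\frac1\ba\right)\frac{S(u_s)^{\ba/(\ba-p)}}{C(u_s)^{p/(\ba-p)}}\le\left(\frac1p-\frac1\ba\right)\frac{S(u_s)^{\ba/(\ba-p)}}{c_0^{p/(\ba-p)}}\longrightarrow 0,\]
which gives $\ds\inf_{\mc N_\la^-}J_\la\le0$ and, with the first paragraph, $\ds\inf_{\mc N_\la^-}J_\la=0$. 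The only genuinely delicate point is the construction of $w$: one must guarantee simultaneously that $w$ is supported where $b>0$ (to keep $C$ positive along the whole ray) and that its Rayleigh quotient exceeds $\la$ (to force $S(w)>0$); both are achieved by concentrating $w$ in a small ball about $x_0$, where the sign-preservation on $B_r(x_0)$ removes any need for a delicate path argument.
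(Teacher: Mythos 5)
Your argument is correct, and its core mechanism is the same as the paper's: the paper proves this statement through Lemma \ref{ttt2} and Corollary \ref{c2}, which likewise start from $\phi_1\in E_\la^-\cap B^+$, deform along a continuous path whose normalized directions cross from $E_\la^-$ into $E_\la^+$ while $\int_\Om b|\cdot|^\ba\,dx$ stays bounded below, and then let the crossing force $t(v)\to 0$, hence $J_\la(t(v)v)=\left(\frac1p-\frac1\ba\right)t(v)^\ba\int_\Om b|v|^\ba dx\to 0$; your scale-invariant energy formula is this same identity written for unnormalized $u$. Where you genuinely depart from the paper is in how the perturbation is built. The paper (in Lemma \ref{ttt2}) picks an abstract $h\in X_0$ with small $L^\infty$-norm but large $X_0$-norm and asserts, without detail, that $\int_\Om b|u+\e h|^\ba dx>\frac12\int_\Om b|u|^\ba dx$ for all $\e\in[0,1]$ while the direction of $u+h$ lies in $E_\la^+$; you instead take a nonnegative bump $w$ concentrated in a small ball where $b>0$, so the lower bound $C(u_s)\ge c_0$ becomes immediate by monotonicity (adding $sw\ge0$ where $b>0$ only increases the integrand), and $S(w)>0$ follows from an explicit H\"older--Sobolev concentration estimate; this replaces the paper's unproved selection of $h$ by a verifiable construction, at the mild extra cost of invoking positivity of $\phi_1$. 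You also make explicit the lower bound $\inf_{\mc N_\la^-}J_\la\ge0$, which the paper only records in passing. One slip to repair: the phrase ``for $s\downarrow s_*$ we get $S(u_s)\to0^+$'' is unjustified, since $S$ need not be positive immediately to the right of its first zero $s_*$; but since $S(u_{s_*})=0$ and $S(u_s)\to+\infty$ as $s\to\infty$, the intermediate value theorem on $[s_*,M]$ yields, for every small $\de>0$, some $s$ with $S(u_s)=\de$ and $C(u_s)\ge c_0$, which is all your final estimate requires.
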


\noi The paper is organized as follows: In section 2, we give some preliminaries results. In section 3, we study the behavior of Nehari manifold using
fibering map analysis for \eqref{eq01}. Section 4 contains the
existence of non-trivial solutions in $\mc N_{\la}^{+}$ and $\mc
N_{\la}^{-}$ and non-existence results in $p-$sublinear case. Section 5 contains the
existence and non-existence of solutions in $p-$superlinear case.

\noi We shall throughout use the following notations: The norm on
$X_0$ and $L^{p}(\Om)$ are denoted by $\|\cdot\|$ and $\|u\|_{p}$
respectively. The weak convergence is denoted by $\rightharpoonup$
and $\ra$ denotes strong convergence. We also define $u^{+}=\max(u,0)$ and $u^{-}=\max{(-u,0)}$.

\section{Functional Analytic Settings}

\noi In this section, we first define the function space and prove some properties which are useful to find the solution of the the problem \eqref{eq01}. For this we define $W^{\al,p}(\Om)$, the usual fractional Sobolev
space $W^{\al,p}(\Om):= \left\{u\in L^{p}(\Om); \frac{(u(x)-u(y))}{|x-y|^{\frac{n}{p}+\al}}\in L^{p}(\Om\times\Om)\right\}$ endowed with the norm
\begin{align}\label{2}
\|u\|_{W^{\al,p}(\Om)}=\|u\|_{L^p}+ \left(\int_{\Om\times\Om}
\frac{|u(x)-u(y)|^{p}}{|x-y|^{n+p\al}}dxdy \right)^{\frac 1p}.
\end{align}
To study fractional Sobolev space in details we refer \cite{hic}.

\noi Due to the non-localness of the operator $\mc L_K$ we define linear space as follows:
\[X= \left\{u|\;u:\mb R^n \ra\mb R \;\text{is measurable},
u|_{\Om} \in L^p(\Om)\;
 and\;  \left(u(x)- u(y)\right)\sqrt[p]{K(x-y)}\in
L^p(Q)\right\}\]

\noi where $Q=\mb R^{2n}\setminus(\mc C\Om\times \mc C\Om)$ and
 $\mc C\Om := \mb R^n\setminus\Om$. In case of $p=2$, the space $X$ was firstly introduced by Servadei and Valdinoci \cite{mp}. The space X is a normed linear space endowed with the norm
\begin{align}\label{1}
 \|u\|_X = \|u\|_{L^p(\Om)} +\left( \int_{Q}|u(x)- u(y)|^{p}K(x-y)dx
dy\right)^{\frac1p}.
\end{align}
 Then we define
 \[ X_0 = \{u\in X : u = 0 \;\text{a.e. in}\; \mb R^n\setminus \Om\}\]
with the norm
\begin{align}\label{01}
 \|u\|=\left(\int_{Q}|u(x)-u(y)|^{p}K(x-y)dx dy\right)^{\frac1p}
\end{align}
is a reflexive Banach space.
%%%%%%%%%%%%%%%%%%%%%%%%%%%%%%%%%%%%%%%%%%%%%%%%%%%%%%%%%%%%%%%%%%%%%%%%%%%%%%%%
%%%%%%%%%%%%%%%%%%%%%%%%%%%%%%%%%%%%%%%%%%%%%%%%%%%%%%%%%%%%%%%%%%%%%%%%%%%%%%%%%%%%%%%%%%%%%
We notice that, even in the model case in which $K(x)=|x|^{n+p\al}$, the
norms in \eqref{2} and \eqref{1} are not same because $\Om\times
\Om$ is strictly contained in $Q$. Now we prove some properties of the spaces $X$ and $X_0$. Proof of these are easy to extend as in \cite{mp} but for completeness, we give the detail of proof.

\begin{Lemma}\label{l2}
Let $K :\mb R^n\setminus \{0\}\ra (0,\infty)$ be a function
satisfying $(b)$. Then
\begin{enumerate}
\item[1.] If $u\in X$ then $u\in W^{\al, p}(\Om)$ and moreover
\[\|u\|_{W^{\al,p}(\Om)}\leq c(\theta) \|u\|_{X}.\]
\item[2.] If $u\in X_0$ then $u\in W^{\al, p}(\mb R^n)$ and moreover
\[\|u\|_{W^{\al,p}(\Om)}\leq \|u\|_{W^{\al,p}(\mb R^n)} \leq c(\theta) \|u\|_{X}.\]
\end{enumerate}
 In both the cases $c(\theta)=\max\{1,
\theta^{-1/p}\}$, where $\theta$ is given in $(b)$.
\end{Lemma}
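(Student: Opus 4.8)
The plan is to exploit two elementary facts: the set inclusion $\Om\times\Om\subseteq Q$ and the pointwise lower bound $(b)$, which rewrites as $|x-y|^{-(n+p\al)}\le \theta^{-1}K(x-y)$ for $x\neq y$. Both parts then reduce to comparing Gagliardo-type integrals over different integration domains and invoking this bound.

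For part 1, I would first note that $u|_{\Om}\in L^p(\Om)$ holds by the very definition of $X$, so only the seminorm needs to be controlled. Since $\Om\cap\mc C\Om=\emptyset$, the product $\Om\times\Om$ is disjoint from $\mc C\Om\times\mc C\Om$ and is therefore contained in $Q$. Using $(b)$ together with the nonnegativity of the integrand I would estimate
\[
\int_{\Om\times\Om}\frac{|u(x)-u(y)|^p}{|x-y|^{n+p\al}}\,dx\,dy \le \theta^{-1}\int_{\Om\times\Om}|u(x)-u(y)|^p K(x-y)\,dx\,dy \le \theta^{-1}\int_{Q}|u(x)-u(y)|^p K(x-y)\,dx\,dy.
\]
Taking $p$-th roots, adding $\|u\|_{L^p(\Om)}$, and factoring out $\max\{1,\theta^{-1/p}\}$ gives $\|u\|_{W^{\al,p}(\Om)}\le c(\theta)\|u\|_X$ with $c(\theta)=\max\{1,\theta^{-1/p}\}$, as claimed.

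For part 2, the extra ingredient is the decomposition $\mb R^{2n}=Q\cup(\mc C\Om\times\mc C\Om)$. The key observation is that for $u\in X_0$ one has $u\equiv 0$ a.e.\ on $\mc C\Om$, so on the piece $\mc C\Om\times\mc C\Om$ the difference $u(x)-u(y)$ vanishes a.e.; hence the Gagliardo integral over $\mb R^{2n}$ coincides with the one over $Q$. Applying $(b)$ exactly as above converts this into $\theta^{-1}\int_{Q}|u(x)-u(y)|^p K(x-y)\,dx\,dy$, while $u=0$ outside $\Om$ forces $\|u\|_{L^p(\mb R^n)}=\|u\|_{L^p(\Om)}$. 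Combining these two facts yields $\|u\|_{W^{\al,p}(\mb R^n)}\le c(\theta)\|u\|_X$. The remaining inequality $\|u\|_{W^{\al,p}(\Om)}\le\|u\|_{W^{\al,p}(\mb R^n)}$ is immediate, since $\Om\times\Om\subseteq\mb R^{2n}$ and $\|u\|_{L^p(\Om)}\le\|u\|_{L^p(\mb R^n)}$.

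The argument is essentially bookkeeping, so I do not anticipate a genuine obstacle. The one point requiring care is keeping the three integration domains $\Om\times\Om$, $Q$ and $\mb R^{2n}$ straight and verifying that the $\mc C\Om\times\mc C\Om$ contribution really drops out in the $X_0$ case. This is precisely where the exterior condition $u=0$ on $\mb R^n\setminus\Om$ enters, and it is what allows the full $\mb R^n$ norm (rather than merely the $\Om$ norm) to be controlled with the same constant $c(\theta)$.
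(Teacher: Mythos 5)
Your proposal is correct and follows essentially the same route as the paper's proof: part 1 uses the kernel bound $(b)$ together with the inclusion $\Om\times\Om\subseteq Q$, and part 2 uses the vanishing of $u$ on $\mc C\Om$ to identify the Gagliardo integral over $\mb R^{2n}$ with the one over $Q$ before applying the same bound. Your write-up is in fact slightly more explicit than the paper's (e.g., justifying why the $\mc C\Om\times\mc C\Om$ contribution drops out), but the underlying argument is identical.
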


\begin{proof}
\begin{enumerate}
\item[1.] Let $u\in X$, then by $(b)$ we have
\begin{align*}
\int_{\Om\times\Om}\frac{|u(x)-u(y)|^{p}}{|x-y|^{n+p\al}} dx dy
&\leq \frac{1}{\theta}\int_{\Om\times\Om} |u(x)-u(y)|^{p} K(x-y) dx
dy\\
& \leq \frac{1}{\theta}\int_{Q} |u(x)-u(y)|^{p} K(x-y) dx dy<\infty.
\end{align*}
Thus
\begin{align*}
\|u\|_{W^{\al,p}}=\|u\|_{p}+
\left(\int_{\Om\times\Om}\frac{|u(x)-u(y)|^{p}}{|x-y|^{n+p\al}} dx
dy\right)^{\frac{1}{p}}\leq c(\theta) \|u\|_X.
\end{align*}
\item[2.] Let $u\in X_0$ then $u=0$ on $\mb R^n\setminus \Om$. So
$\|u\|_{L^{2}(\mb R^n)}= \|u\|_{L^{2}(\Om)}$. Hence
\begin{align*}
\int_{\mb R^{2n}}\frac{|u(x)-u(y)|^{p}}{|x-y|^{n+p\al}} dx dy &=
\int_{Q} \frac{|u(x)-u(y)|^{p}}{|x-y|^{n+p\al}} dx dy\\
& \leq \frac{1}{\theta}\int_{Q} |u(x)-u(y)|^{p} K(x-y) dx dy<
+\infty,
\end{align*}
\end{enumerate}
as required.\QED
\end{proof}

%%%%%%%%%%%%%%%%%%%%%%%%%%%%%%%%%%%%%%%%%%%%%%%%%%%%%%%%%%%%%%%%%%%%%%%%%%%%%%%%%%%%%%%%%%%%%%%%%%%%
\begin{Lemma}\label{l3}
Let $K :\mb R^n\setminus \{0\}\ra (0,\infty)$ be a function
satisfying $(b)$. Then there exists a positive constant $c$
depending on $n$ and $\al$ such that for every $u\in X_0$, we have
\[\|u\|_{L^{p^*}(\Om)}^{p}= \|u\|_{L^{p^*}(\mb R^n)}^{p}\leq c\int_{\mb R^{2n}}\frac{|u(x)-u(y)|^{p}}{|x-y|^{n+p\al}} dx dy, \]
where $p^*=\frac{np}{n-p\al}$ is fractional critical Sobolev exponent.
\end{Lemma}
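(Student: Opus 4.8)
The plan is to reduce the estimate to the Sobolev-type inequality for the full-space fractional Sobolev space $W^{\al,p}(\mb R^n)$ recorded in \cite{hic}, since a function $u\in X_0$ is, after extension by zero, precisely a (compactly supported) element of $W^{\al,p}(\mb R^n)$. The two parts of the statement separate cleanly: the equality $\|u\|_{L^{p^*}(\Om)}=\|u\|_{L^{p^*}(\mb R^n)}$ is immediate, because $u=0$ a.e.\ on $\mb R^n\setminus\Om$, so only the inequality requires an argument.

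First I would note that the right-hand side is exactly the Gagliardo seminorm of $u$ regarded as a function on all of $\mb R^n$. Since $u=0$ on $\mc C\Om$, the integrand $|u(x)-u(y)|^p|x-y|^{-(n+p\al)}$ vanishes identically on $\mc C\Om\times\mc C\Om=\mb R^{2n}\setminus Q$; hence $\int_{\mb R^{2n}}=\int_{Q}$, which is the same reduction already used in Lemma \ref{l2}(2). By Lemma \ref{l2}(2) we also know that $u\in W^{\al,p}(\mb R^n)$, so $u$ is an admissible competitor for the embedding.

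Next I would apply the fractional Sobolev inequality: in the range $p\al<n$ there is a constant $c=c(n,\al,p)>0$ such that $\|u\|_{L^{p^*}(\mb R^n)}^p\leq c\int_{\mb R^{2n}}|u(x)-u(y)|^p|x-y|^{-(n+p\al)}\,dx\,dy$ for every compactly supported $u\in W^{\al,p}(\mb R^n)$; as $u$ is supported in $\ov\Om$, which is bounded, this form applies directly. Combining it with the two identities of the previous step produces the claimed inequality with the same constant, whose value depends only on $n$, $\al$ and the fixed exponent $p$, as asserted.

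The only genuinely substantive ingredient is the full-space fractional Sobolev inequality itself; the rest is bookkeeping about the region of integration. If one wished to prove that inequality instead of citing \cite{hic}, the main obstacle would be the absence of any pointwise fundamental-theorem-of-calculus bound for the fractional seminorm. The standard way around this is to estimate the measure of the super-level sets $\{|u|>t\}$ via a dyadic decomposition together with a covering argument, which reproduces exactly the critical scaling $p^*=np/(n-p\al)$; since this is precisely the statement quoted from \cite{hic}, I would simply invoke it.
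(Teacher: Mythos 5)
Your proposal is correct and follows essentially the same route as the paper: both deduce $u\in W^{\al,p}(\mb R^n)$ from Lemma \ref{l2}(2) and then invoke the fractional Sobolev embedding from \cite{hic}. Your write-up is simply more explicit about the points the paper leaves implicit, namely that $\|u\|_{L^{p^*}(\Om)}=\|u\|_{L^{p^*}(\mb R^n)}$ since $u$ vanishes outside $\Om$, and that the seminorm-only form of the embedding applies because $u$ is compactly supported.
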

\begin{proof}
Let $u\in X_0$ then by Lemma \ref{l2}, $u\in W^{\al,p}(\mb R^n)$. Also we know that $W^{\al,p}(\mb R^n)\hookrightarrow L^{p^*}(\mb
R^n)$ (see \cite{hic}). Then we have,
\[\|u\|_{L^{p^*}(\Om)}^{p}= \|u\|_{L^{p^*}(\mb R^n)}^{p}\leq c\int_{\mb R^{2n}}\frac{|u(x)-u(y)|^{p}}{|x-y|^{n+p\al}} dx dy  \]
and hence the result. \QED
\end{proof}
%%%%%%%%%%%%%%%%%%%%%%%%%%%%%%%%%%%%%%%%%%%%%%%%%%%%%%%%%%%%%%%%%%%%%%%%%%%%%%%%%%%%%%%%%%%%%%%%%%%%%%%%%%%%%%%%%%%%%%%%%%%%%%%%
\begin{Lemma}\label{l4}
Let $K :\mb R^n\setminus \{0\}\ra (0,\infty)$ be a function
satisfying $(b)$. Then there exists $C>1$, depending only on $n$,
$\al$, $p$, $\theta$ and $\Om$ such that for any $u\in X_0$,
\[\int_{Q} |u(x)-u(y)|^{p} K(x-y) dx dy\leq \|u\|_{X}^{p} \leq C\int_{Q} |u(x)-u(y)|^{p} K(x-y) dx dy.\]
i.e. \begin{align}\label{ee1} \|u\|^{p} = \int_{Q}
|u(x)-u(y)|^{p} K(x-y) dx dy
\end{align}
 is a norm on $X_0$ and equivalent to the norm on $X$.
\end{Lemma}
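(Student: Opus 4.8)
The plan is to prove the two inequalities separately; the left one is immediate, and the right one reduces to a Poincar\'e-type estimate. Throughout I write $\|u\|^{p}=\int_{Q}|u(x)-u(y)|^{p}K(x-y)\,dx\,dy$ for the candidate $X_0$-norm, so that by the definition of the $X$-norm one has $\|u\|_{X}=\|u\|_{L^p(\Om)}+\|u\|$ for every $u\in X_0$. The left inequality $\|u\|^{p}\le\|u\|_{X}^{p}$ then follows at once, since $\|u\|_{X}\ge\|u\|\ge 0$ and $t\mapsto t^{p}$ is increasing on $[0,\infty)$.

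For the right inequality it suffices to dominate $\|u\|_{L^p(\Om)}$ by a fixed multiple of $\|u\|$. First I would use that $u\equiv 0$ on $\mb R^n\setminus\Om$: on the set $\mc C\Om\times\mc C\Om$ the integrand $|u(x)-u(y)|^{p}$ vanishes, so $\int_{\mb R^{2n}}=\int_{Q}$ for each of the integrals below. Combining this with hypothesis $(b)$, namely $K(x-y)\ge\theta|x-y|^{-(n+p\al)}$, gives
\[
\int_{\mb R^{2n}}\frac{|u(x)-u(y)|^{p}}{|x-y|^{n+p\al}}\,dx\,dy\le\frac{1}{\theta}\int_{Q}|u(x)-u(y)|^{p}K(x-y)\,dx\,dy=\frac{1}{\theta}\|u\|^{p}.
\]

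Next I would invoke Lemma \ref{l3}, which bounds $\|u\|_{L^{p^*}(\Om)}^{p}$ by a constant times the Gagliardo integral just estimated, whence $\|u\|_{L^{p^*}(\Om)}^{p}\le (c/\theta)\|u\|^{p}$. Since $\Om$ is bounded and $p<p^{*}$, H\"older's inequality yields $\|u\|_{L^p(\Om)}\le|\Om|^{1/p-1/p^{*}}\|u\|_{L^{p^*}(\Om)}$, and substituting the previous bound produces a Poincar\'e inequality $\|u\|_{L^p(\Om)}\le c'\|u\|$ with $c'$ depending only on $n$, $\al$, $p$, $\theta$ and $\Om$. Finally $\|u\|_{X}=\|u\|_{L^p(\Om)}+\|u\|\le(1+c')\|u\|$, and raising to the $p$-th power gives $\|u\|_{X}^{p}\le C\|u\|^{p}$ with $C=(1+c')^{p}>1$, as required; this simultaneously shows that $\|\cdot\|$ is a norm on $X_0$ equivalent to $\|\cdot\|_{X}$.

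The only nontrivial ingredient is the Poincar\'e inequality $\|u\|_{L^p(\Om)}\le c'\|u\|$, and its proof rests entirely on the Sobolev embedding of Lemma \ref{l3} together with the boundedness of $\Om$. I would therefore expect the chaining of Lemma \ref{l3}, hypothesis $(b)$, and H\"older's inequality to be the heart of the argument, everything else being bookkeeping with the triangle inequality and monotonicity of $t\mapsto t^{p}$.
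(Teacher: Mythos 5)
Your proof is correct and follows essentially the same route as the paper: both arguments chain H\"older's inequality on the bounded domain $\Om$, the Sobolev embedding of Lemma \ref{l3}, and hypothesis $(b)$ to dominate $\|u\|_{L^p(\Om)}$ by the Gagliardo-type quantity, the only difference being bookkeeping (you extract a Poincar\'e inequality $\|u\|_{L^p(\Om)}\le c'\|u\|$ and get $C=(1+c')^p$, while the paper first expands with $(a+b)^p\le 2^{p-1}(a^p+b^p)$ and bounds each term). One minor divergence: for definiteness of $\|\cdot\|$ the paper argues directly that $\|u\|=0$ forces $u$ to be constant on $Q$ and hence zero, whereas you deduce it from the norm equivalence together with the fact that $\|\cdot\|_X$ is a norm on $X$; since the paper asserts that fact beforehand, your shortcut is legitimate.
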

\begin{proof}
Clearly $\|u\|_{X}^{p}\geq \int_{Q} |u(x)-u(y)|^{p} K(x-y) dx dy$. Now by Lemma \ref{l3} and $(b)$, we get
\begin{align*}
\|u\|_{X}^{p}&= \left(\|u\|_{p} + \left(\int_{Q}
|u(x)-u(y)|^{p} K(x-y) dx dy \right)^{1/p}\right)^p \\
&\leq 2^{p-1} \|u\|_{p}^{p} + 2^{p-1} \int_{Q} |u(x)-u(y)|^{p} K(x-y) dx dy\\
&\leq  2^{p-1} |\Om|^{1-\frac{p}{p^*}} \|u\|_{p^*}^{p} + 2^{p-1} \int_{Q} |u(x)-u(y)|^{p} K(x-y) dx dy\\
&\leq  2^{p-1} \;c|\Om|^{1-\frac{p}{p^*}} \int_{\mb R^{2n}}\frac{|u(x)-u(y)|^{p}}{|x-y|^{n+p\al}}dx dy + 2^{p-1} \int_{Q} |u(x)-u(y)|^{p} K(x-y) dx dy\\
&\leq 2^{p-1}
\left(\frac{c|\Om|^{1-\frac{p}{p^*}}}{\theta}+1\right)\int_{Q}
|u(x)-u(y)|^{p} K(x-y) dx dy\\
&= C \int_{Q} |u(x)-u(y)|^{p} K(x-y) dx dy,
\end{align*}
where $C>1$ as required. Now we show that \eqref{ee1} is a norm on
$X_0$. For this we need only to show that if $\|u\|=0$ then
$u=0$ a.e. in $\mb R^n$ as other properties of norm are obvious. Indeed, if $\|u\|=0$ then
 $ \int_{Q}|u(x)- u(y)|^{p}K(x-y)dx dy=0$ which implies that $u(x)=
u(y)$ a.e in $Q$. Therefore, $u$ is constant in $Q$ and hence $u=c\in \mb R$
a.e in $\mb R^n$. Also by definition of $X_0$, we have $u=0$ on $\mb
R^n\setminus \Om$. Thus $u=0$ a.e. in $\mb R^n$.\QED

\end{proof}
%%%%%%%%%%%%%%%%%%%%%%%%%%%%%%%%%%%%%%%%%%%%%%%%%%%%%%%%%%%%%%%%%%%%%%%%%%%%%%%%%%%%%%%%%%%%%%%%%%%%%%%%%%%%%%%%%%%%%%%
\begin{Lemma}
Let $K :\mb R^n\setminus \{0\}\ra (0,\infty)$ be a function
satisfying $(b)$ and let $\{u_k\}$ be a bounded sequence in $X_0$.
Then there exists $u\in L^{\ba}(\mb R^n)$ such that up to a
subsequence, $u_k\ra u$ strongly in $L^{\ba}(\mb R^n)$ as $k\ra \infty$ for any
$\ba\in [1, p^*)$.
\end{Lemma}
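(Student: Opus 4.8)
The plan is to prove a compact embedding result: that a bounded sequence in $X_0$ has a subsequence converging strongly in $L^\beta(\mb R^n)$ for every $\beta \in [1, p^*)$. The natural strategy is to transfer the problem to the standard fractional Sobolev space $W^{\al,p}$, where the relevant compactness is already known, and then exploit the boundary condition $u=0$ outside $\Om$ together with the boundedness of $\Om$ to upgrade local compactness to global compactness on $\mb R^n$.

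First I would use Lemma \ref{l2}(2): since $\{u_k\}$ is bounded in $X_0$, the inequality $\|u_k\|_{W^{\al,p}(\mb R^n)} \leq c(\theta)\|u_k\|$ shows that $\{u_k\}$ is bounded in $W^{\al,p}(\mb R^n)$. Because every $u_k$ vanishes outside the bounded set $\Om$, one can regard $\{u_k\}$ as a bounded sequence in $W^{\al,p}$ supported in a fixed bounded region. I would then invoke the compact embedding $W^{\al,p}(\Om) \hookrightarrow\hookrightarrow L^{\beta}(\Om)$ for $\beta \in [1, p^*)$, which holds for bounded domains $\Om$ with smooth boundary (the fractional Rellich-Kondrachov theorem; see \cite{hic}). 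By reflexivity of $X_0$ (Lemma \ref{l4}) and boundedness, up to a subsequence $u_k \rightharpoonup u$ weakly in $X_0$, hence weakly in $W^{\al,p}$, and by compactness $u_k \ra u$ strongly in $L^\beta(\Om)$.

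The final step is to pass from convergence on $\Om$ to convergence on all of $\mb R^n$. This is where the Dirichlet condition does the essential work: since each $u_k = 0$ a.e.\ on $\mb R^n \setminus \Om$, the weak limit $u$ also vanishes there (the zero-outside-$\Om$ constraint is preserved under weak convergence in $X_0$, as $u \in X_0$), so $\|u_k - u\|_{L^\beta(\mb R^n)}^\beta = \|u_k - u\|_{L^\beta(\Om)}^\beta \ra 0$. This identifies the limit as an element of $L^\beta(\mb R^n)$ and gives the strong convergence claimed.

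The main obstacle, and the only point requiring care, is the compact embedding of the fractional Sobolev space on the bounded domain $\Om$; everything else is bookkeeping built on Lemmas \ref{l2}--\ref{l4}. I expect the authors to simply cite \cite{hic} for the fractional Rellich-Kondrachov compactness rather than reprove it, so the proof should be short, with the substantive content being the reduction to $W^{\al,p}(\Om)$ via Lemma \ref{l2} and the use of $u_k = 0$ outside $\Om$ to extend the $L^\beta$-convergence from $\Om$ to $\mb R^n$.
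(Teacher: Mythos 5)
Your proposal is correct and follows essentially the same route as the paper: reduce to boundedness in $W^{\al,p}$ via Lemma \ref{l2}, invoke the fractional compact embedding on the bounded domain $\Om$ (the paper cites the corresponding compactness result from \cite{hic}), and use the condition $u_k=0$ on $\mb R^n\setminus\Om$ to extend the $L^{\ba}(\Om)$ convergence to $L^{\ba}(\mb R^n)$. The only cosmetic difference is that you extract a weak limit in $X_0$ first, whereas the paper obtains the limit directly from the compactness statement and then defines $u:=0$ outside $\Om$; both are valid.
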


\begin{proof}
Let $\{u_k\}$ is bounded in $X_0$. Then by Lemmas \ref{l2} and \ref{l4},
$\{u_k\}$ is bounded in $W^{\al,p}(\Om)$ and in $L^{p}(\Om)$. Also
by assumption on $\Om$ and [4, Corollary 7.2], there exists $u\in
L^{\ba}(\Om)$ such that up to a subsequence $u_k\ra u$ strongly in $
L^{\ba}(\Om)$ as $k\ra\infty$ for any $\ba\in[1,p^*)$. Since $u_k=0$ on
$\mb R^n\setminus \Om$, we can define $u:=0$ in $\mb R^n\setminus
\Om$. Then we get $u_k\ra u$ in $ L^{\ba}(\mb R^n)$.\QED
\end{proof}

%%%%%%%%%%%%%%%%%%%%%%%%%%%%%%%%%%%%%%%%%%%%%%%%%%%%%%%%%%%%%%%%%%%%%%%%%%%%%%%%%%%%%%%%%%%%%%%%%%%%%%%%%%%
%%%%%%%%%%%%%%%%%%%%%%%%%%%%%%%%%%%%%%%%%%%%%%%%%%%%%%%%%%%%%%%%%%%%%%%%%%%%%%%%%%%%%%%%%%%%%%%%%%%%%%%
\section{Nehari Manifold and fibering map analysis}
In this section, we introduce the Nehari Manifold and exploit the relationship between Nehari Manifold and fibering map.
\noi Now the Euler functional $J_{\la}: X_0\ra \mb R$ is defined as
\[J_{\la}(u)= \frac{1}{p}\int_{Q}|u(x)-u(y)|^p K(x-y) dxdy -\frac{\la}{p}\int_{\Om}|u|^{p} dx -\frac{1}{\ba}\int_{\Om} b(x) |u|^{\ba}.\]
If $J_{\la}$ is bounded below on $X_0$ then minimizers of $J_{\la}$ on $X_0$ become the critical point of $J_{\la}$. Here $J_{\la}$ is not
bounded below on $X_0$ but is bounded below on appropriate subset of $X_0$ and minimizer on this set(if it exists) give rise to solutions of the problem \eqref{eq01}.
Therefore in order to obtain the existence results, we
introduce the Nehari manifold
\begin{equation*}
\mc N_{\la}= \left\{u\in X_0: \ld J_{\la}^{\prime}(u),u\rd=0
\right\}=\left\{u\in X_0 : \phi_{u}^{\prime}(1)=0\right\}
\end{equation*}
where $\ld\;,\; \rd$ denotes the duality between $X_0$ and its dual
space. Thus $u\in \mathcal N_{\la}$ if and only if
\begin{equation}\label{eq2}
\int_{Q} |u(x)-u(y)|^p K(x-y) dxdy - \la \int_{\Om} |u|^{p} dx-
\int_{\Om} b(x)|u|^{\ba}dx =0 .
\end{equation}
We note that $\mathcal N_{\la}$ contains every solution of
\eqref{eq01}. Now as we know that the Nehari manifold is closely
related to the behavior of the functions $\phi_u: \mb R^+\ra \mb R$
defined as $\phi_{u}(t)=J_{\la}(tu)$. Such maps are called fiber
maps and were introduced by Drabek and Pohozaev in \cite{DP}. For
$u\in X_0$, we have
\begin{align*}
\phi_{u}(t) &= \frac{t^p}{p} \|u\|^p- \frac{\la
 t^{p}}{p}\int_{\Om}|u|^{p} dx - \frac{t^{\ba}}{\ba}\int_{\Om} b(x) |u|^{\ba} dx ,\\
\phi_{u}^{\prime}(t) &=  t^{p-1}\|u\|^{p}- {\la
  t^{p-1}}\int_{\Om} |u|^{p} dx  - t^{\ba-1} \int_{\Om} b(x) |u|^{\ba} dx,\\
\phi_{u}^{\prime\prime}(t) &= (p-1) t^{p-2}\|u\|^{p}-  \la(p-1)
 t^{p-2} \int_{\Om} |u|^{p} dx - (\ba-1) t^{\ba-2}\int_{\Om} b(x) |u|^{\ba} dx.
\end{align*}
Then it is easy to see that $tu\in \mathcal N_{\la}$ if and only if
$\phi_{u}^{\prime}(t)=0$ and in particular, $u\in \mc N_{\la}$ if
and only if $\phi_{u}^{\prime}(1)=0$. Thus it is natural to split
$\mathcal N_{\la}$ into three parts corresponding to local minima,
local maxima and points of inflection. For this we set
\begin{align*}
\mathcal N_{\la}^{\pm}&:= \left\{u\in \mc N_{\la}:
\phi_{u}^{\prime\prime}(1)
\gtrless0\right\} =\left\{tu\in X_0 : \phi_{u}^{\prime}(t)=0,\; \phi_{u}^{''}(t)\gtrless  0\right\},\\
\mathcal N_{\la}^{0}&:= \left\{u\in \mc N_{\la}:
\phi_{u}^{\prime\prime}(1) = 0\right\}=\left\{tu\in X_{0} :
\phi_{u}^{\prime}(t)=0,\; \phi_{u}^{''}(t)= 0\right\}.
\end{align*}
We also observe that if $tu\in\mc N_{\la}$ then
$\phi_{u}^{\prime\prime}(t)=(p-\ba) t^{\ba-2}\int_{\Om}
b(x)|u|^{\ba}dx$.
%%%%%%%%%%%%%%%%%%%%%%%%%%%%%%%%%%%%%%%%%%%%%%%%%%%%%%%%%%%%%%%%%%%%%%%%%%%%%%%%%%%%%%%%%%%%
\noi Now we describe the behavior of the fibering map $\phi_{u}$
according to the sign of $E_{\la}(u):= \|u\|^{p}- \la\int_{\Om} |u|^{p} dx$ and $B(u):=
\int_{\Om} b(x)|u|^{\ba} dx$. Define
\[E^{\pm}_{\la}:= \{u\in X_0: \|u\|=1, E_{\la}(u)\gtrless 0\}, \quad \; B^{\pm}:= \{u\in X_0: \|u\|=1, B(u)\gtrless 0\}, \]
\[E^{0}_{\la}:= \{u\in X_0: \|u\|=1, E_{\la}(u)= 0\},\;\quad\; B^{0}:= \{u\in X_0: \|u\|=1, B(u)= 0\}.\]

\noi Case 1: $u\in E_{\la}^{-}\cap B^{+}$.\\
In this case $\phi_{u}(0)=0$, $\phi_{u}^{\prime}(t)<0$ $\fa$ $t>0$
which means that $\phi_{u}$ is strictly decreasing and so it has no
critical point.

\noi Case 2: $u\in E_{\la}^{+}\cap B^{-}$.\\
In this case $\phi_{u}(0)=0$, $\phi_{u}^{\prime}(t)>0$ $\fa$ $t>0$
which implies that $\phi_{u}$ is strictly increasing and hence no
critical point.\\
Now the other cases depend on $\ba$ as the behavior of $\phi_{u}$ changes according to $1<\ba<p$ or $p<\ba<p^{*}$.\\
\noi Case 3: $u\in E_{\la}^{+}\cap B^{+}$.\\
In {\bf $p-$sublinear} case$(1<\ba<p)$, $\phi_{u}(0)=0$, $\phi_{u}(t)\ra +\infty$ as $t\ra \infty$ and $\phi_{u}(t)<0$ for small $t>0$ as $u\in E_{\la}^{+}\cap B^{+}$. Also $\phi_{u}^{\prime}(t)=0$ when $t(u)=
\left[\frac{\int_{\Om} b(x)|u|^{\ba} dx}{\|u\|^{p} - \la \int_{\Om}
|u|^{p} dx}\right]^{\frac{1}{p-\ba}}$. Thus $\phi_{u}$
has exactly one critical point $t(u)$, which is a global minimum
point. Hence $t(u)u \in \mc N^{+}_{\la}$.\\

\noi In $p-$superlinear case$(p <\ba < p^{*})$, $\phi_{u}(0)=0$, $\phi_{u}(t)>0$ for small $t>0$ as $u\in E_{\la}^{+}\cap B^{+}$, $\phi_{u}(t)\ra -\infty$ as $t\ra
\infty$ and $\phi_{u}^{\prime}(t)=0$ when \[t(u)= \left[\frac{\|u\|^{p}
- \la \int_{\Om} |u|^{p} dx}{\int_{\Om} b(x)|u|^{\ba}
dx}\right]^{\frac{1}{\ba-p}}.\]
This implies that $\phi_{u}$ has exactly one critical
point $t(u)$, which is a global maximum point. Hence $t(u)u \in \mc
N^{-}_{\la}$.\\

\noi Case 4: $u\in E_{\la}^{-}\cap B^{-}$.\\
In {\bf $p-$sublinear} case, $\phi_{u}(0)=0$, $\phi_{u}(t)>0$ for small $t>0$ as $u\in E_{\la}^{-}\cap B^{-}$, $\phi_{u}(t)\ra -\infty$ as $t\ra
\infty$ and $\phi_{u}^{\prime}(t)=0$ when \[t(u)=
\left[\frac{\int_{\Om} b(x)|u|^{\ba} dx}{\|u\|^{p} - \la \int_{\Om}
|u|^{p} dx}\right]^{\frac{1}{p-\ba}}.\] This implies that
$\phi_{u}$ has exactly one critical point $t(u)$, which is a global
maximum point. Hence $t(u)u \in \mc N^{-}_{\la}$.\\
%%%%%%%%%%%%%%%%%%%%%%%%%%%%%%%%%%%%%%%%%%%%%%%%%%%%%%%%%%%%%%%%%%%%%%%%%%%
In {\bf $p-$superlinear} case, $\phi_{u}(0)=0$, $\phi_{u}(t)<0$ for small $t>0$ as $u\in E_{\la}^{-}\cap B^{-}$, $\phi_{u}(t)\ra +\infty$ as $t\ra \infty$ and $\phi_{u}^{\prime}(t)=0$, when $t(u)=
\left[\frac{\|u\|^{p} - \la \int_{\Om} |u|^{p} dx}{\int_{\Om}
b(x)|u|^{\ba} dx}\right]^{\frac{1}{\ba-p}}$. Thus $\phi_{u}$
has exactly one critical point $t(u)$, which is a global minimum
point. Hence $t(u)u \in \mc N^{+}_{\la}$.

%%%%%%%%%%%%%%%%%%%%%%%%%%%%%%%%%%%%%%%%%%%%%%%%%%%%%%%%%%%%%%%%%%%%%%%%%%%%%%%%%%%%%%%%%%%%%%%%
\noi The following Lemma shows that the minimizers for $J_{\la}$ on $\mc N_{\la}$ are often critical points of $J_{\la}$.
\begin{Lemma}\label{le10}
Let $u$ be a local minimizer for $J_{\la}$ on any of above subsets of $\mc
N_{\la}$ such that $u\notin \mc N_{\la}^{0}$, then $u$ is a critical
point for $J_{\la}$.
\end{Lemma}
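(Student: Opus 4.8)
The plan is to run the standard Lagrange-multiplier argument for constrained minimization, exploiting the identity $\ld G'(u),u\rd=\phi_{u}''(1)$, where $G$ is the function cutting out $\mc N_{\la}$. First I would set $G(u):=\ld J_{\la}'(u),u\rd=\|u\|^{p}-\la\int_{\Om}|u|^{p}\,dx-\int_{\Om}b(x)|u|^{\ba}\,dx$, so that $\mc N_{\la}=G^{-1}(0)$; since $J_{\la}$ is Fr\'echet differentiable on $X_0$, $G$ is $C^{1}$. I would also observe that each of $\mc N_{\la}^{+}$ and $\mc N_{\la}^{-}$ is relatively open in $\mc N_{\la}$ (the defining conditions $\phi_{u}''(1)\gtrless 0$ are open and $u\notin\mc N_{\la}^{0}$), so a local minimizer of $J_{\la}$ on one of these subsets is in fact a local minimizer of $J_{\la}$ constrained to all of $\mc N_{\la}$. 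This reduces the statement to the single situation of a constrained local minimizer on $\mc N_{\la}$ with $\phi_{u}''(1)\neq 0$.

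The next step is the key computation $\ld G'(u),u\rd=\phi_{u}''(1)$. Differentiating the three terms of $G$ in the direction $u$ gives $\ld G'(u),u\rd=p\|u\|^{p}-\la p\int_{\Om}|u|^{p}\,dx-\ba\int_{\Om}b(x)|u|^{\ba}\,dx$; substituting the Nehari relation $\|u\|^{p}-\la\int_{\Om}|u|^{p}\,dx=\int_{\Om}b(x)|u|^{\ba}\,dx$ collapses this to $(p-\ba)\int_{\Om}b(x)|u|^{\ba}\,dx$, which is exactly $\phi_{u}''(1)$ as recorded just above the Lemma. In particular, because $u\notin\mc N_{\la}^{0}$ we have $\ld G'(u),u\rd=\phi_{u}''(1)\neq 0$, so $G'(u)\neq 0$ in the dual of $X_0$; thus $0$ is a regular value of $G$ near $u$ and the constraint set is a smooth manifold there.

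With $G'(u)\neq 0$ in hand, the Lagrange multiplier rule yields $\mu\in\mb R$ with $J_{\la}'(u)=\mu\,G'(u)$. Testing this identity against $u$ and using $\ld J_{\la}'(u),u\rd=0$ (as $u\in\mc N_{\la}$) gives $0=\mu\ld G'(u),u\rd=\mu\,\phi_{u}''(1)$. Since $\phi_{u}''(1)\neq 0$, we conclude $\mu=0$, hence $J_{\la}'(u)=0$, i.e. $u$ is a free critical point of $J_{\la}$.

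The step I expect to be the main obstacle---more a point requiring care than a genuine difficulty---is the rigorous invocation of the Lagrange multiplier theorem in the Banach-space setting: one must check that $G\in C^{1}(X_0,\mb R)$ and that $G'(u)\neq 0$, the latter being guaranteed precisely by the hypothesis $u\notin\mc N_{\la}^{0}$ via the identity $\ld G'(u),u\rd=\phi_{u}''(1)$. Verifying the $C^{1}$-regularity of $G$ (equivalently of $J_{\la}$) relies on the continuity of the Nemytskii-type maps associated with the $L^{p}$ and $L^{\ba}$ terms together with the compact embeddings from the preceding lemmas, which ensure that all the integrals and their derivatives are well defined for $\ba\in(1,p^{*})$.
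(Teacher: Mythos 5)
Your proof is correct and follows essentially the same route as the paper: a Lagrange-multiplier argument with the constraint functional $I_{\la}(u)=\ld J_{\la}'(u),u\rd$, testing the identity $J_{\la}'(u)=\mu I_{\la}'(u)$ against $u$ and using $\ld I_{\la}'(u),u\rd=\phi_{u}''(1)\neq 0$ to force $\mu=0$. The only difference is that you make explicit two points the paper leaves implicit --- the relative openness of $\mc N_{\la}^{\pm}$ in $\mc N_{\la}$ and the verification $G'(u)\neq 0$ needed to invoke the multiplier rule --- which strengthens rather than changes the argument.
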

\proof Since  $u$ is a minimizer for $J_{\la}$
under the constraint $I_{\la}(u):=\ld J_{\la}^{\prime}(u),u\rd = 0$, by the theory of Lagrange multipliers, there exists $\mu \in
\mb R$ such that $ J _{\la}^{\prime}(u)= \mu I_{\la}^{\prime}(u)$.
Thus $\ld J_{\la}^{\prime}(u),u\rd= \mu\;\ld
I_{\la}^{\prime}(u),u\rd = \mu \phi_{u}^{\prime\prime}(1)= 0$, but
$u\notin \mc N_{\la}^{0}$ and so $\phi_{u}^{\prime\prime}(1) \ne 0$.
Hence $\mu=0$ completes the proof.\QED
%%%%%%%%%%%%%%%%%%%%%%%%%%%%%%%%%%%%%%%%%%%%%%%%%%%%%%%%%%%%%%%%%%%%%%%%%%%%%%%%%%%%%%%%%%%%%%%%%%

%%%%%%%%%%%%%%%%%%%%%%%%%%%%%%%%%%%%%%%%%%%%%%%%%%%%%%%%%%%%%%%%%%%%%%%%%%%%%%%%%%%%%%%%%%%%%%%%%%

\noi Let $\la_1$ be the smallest eigenvalue of $-\mc L_K$ which is characterized as
\[\la_1 = \inf_{u\in
X_{0}}\left\{\int_{Q}|u(x)-u(y)|^p K(x-y)dxdy : \int_{\Om}
|u|^p=1\right\}.\]
Let $\phi_1$ denotes the eigenfunction corresponding to the the eigenvalue $\la_1$. That is $(\la_1, \phi_1)$ satisfies
\begin{equation*}%\label{eq02}
 \quad \left.
\begin{array}{lr}
 \quad -\mc L_{K}u(x) = \la |u(x)|^{p-2}u(x) \; \text{in}\;
\Om \\
 \quad \quad u = 0 \; \mbox{in}\; \mb R^n \setminus\Om.\\
\end{array}
\quad \right\}
\end{equation*}
 Then
\begin{align}\label{a1}
\int_{Q}|u(x)-u(y)|^{p}K(x-y)dx
dy -\la \int_{\Om}|u|^{p}dx \geq (\la_1-\la) \int_{\Om}|u|^{p}dx\;
\mbox{for all}\; u\in X_0.
\end{align}
\noi Moreover, in \cite{gf}, it is proved that $\la_1$ is simple.
\noi We distinguish the $p-$sublinear and $p-$superlinear case respectively. In the following section we first study the $p-$sublinear case.
\section{$p-$Sublinear Case($1<\ba<p$)}
In this section, we give the detail proof of Theorem \ref{sb1} and \ref{sb2}.
Using \eqref{a1} we have
\begin{align*}
J_{\la}(u)&\geq \frac{1}{p}(\la_1 -\la)\int_{\Om}|u|^p dx
-\frac{1}{\ba}\int_{\Om} b(x)|u|^{\ba} dx\\
&\geq \frac{1}{p}(\la_1 -\la)\int_{\Om}|u|^p dx
-\frac{\overline{b}}{\ba}|\Om|^{1-\frac{\ba}{p}}\left(\int_{\Om}|u|^{p}
dx\right)^{\frac{\ba}{p}}
\end{align*}
where $\overline{b}=\ds{\sup_{x\in \Om} b(x)}$. Hence $J_{\la}$ is bounded below on
$X_0$, when $\la<\la_1$. When $\la>\la_1$, it is easy to see that $J_{\la}(t\phi_1)\ra -\infty$ as $t\ra \infty$. Therefore $J_{\la}$ is not
bounded below on $X_0$. But we show that it is bounded below on the some subset of $\mc N_{\la}$. Also in this case i.e. ($1<\ba<p$), from the definition of $\mc N_{\la}^{\pm}$ and $\mc N_{\la}^{0}$, it is not difficult to see that
\[\mc N_{\la}^{\pm}= \left\{u\in \mc N_{\la}: \int_{\Om} b(x)|u|^{\ba}dx\gtrless0\right\},\;\; \mc N_{\la}^{0}= \left\{u\in \mc N_{\la}: \int_{\Om} b(x)|u|^{\ba}dx=0\right\}.\]
Now on $\mc N_{\la}$, $J_{\la}(u)= \left(\frac{1}{p}-\frac{1}{\ba}\right) \int_{\Om}b(x)|u|^{\ba} dx =\left(\frac{1}{p}-\frac{1}{\ba}\right)(\|u\|^{p}- \la \int_{\Om}|u|^p dx)$.
Then we note that $J_{\la}(u)$ changes sign in $\mc N_{\la}$ but this is true only if both $\mc N_{\la}^{+}$ and $\mc N_{\la}^{-}$ are nonempty. We
have $J_{\la}(u)>0$ on $\mc N_{\la}^{-}$ and $J_{\la}(u)<0$ on $\mc
N_{\la}^{+}$.\\

%%%%%%%%%%%%%%%%%%%%%%%%%%%%%%%%%%%%%%%%%%%%%%%%%%%%%%%%%%%%%%%%%%%%%%%%%%%%%%%%%%%%%
 \noi When $0<\la<\la_1$, $\|u\|^p - \la \int_{\Om} |u|^{p} dx>0$ for all $u\in X_0$. This implies that
$E_{\la}^{+}=\{u\in X_0 : \|u\|=1\}$, $E_{\la}^{-}$ and
$E_{\la}^{0}$ are empty sets. Thus $\mc N_{\la}^{-}=\emptyset= \mc N_{\la}^{0}$ and $\mc N_{\la}=\mc N_{\la}^{+}\cup
\{0\}$. If $\la>\la_1$ then
$$\int_{Q} |\phi_{1}(x)-\phi_{1}(y)|^p K(x-y) dxdy - \la
\int_{\Om} |\phi_{1}|^{p} dx = (\la_1 -\la) \int_{\Om}
|\phi_{1}|^{p} dx <0$$ and so $\phi_1\in E^{-}_{\la}$. Hence, for $\la=\la_1$, we have $E_{\la}^{-}=\emptyset$ and $E_{\la}^{0}=\{\phi_1\}$. And moreover when $\la>\la_1$, $E_{\la}^{-}$ is non-empty and gets bigger as
$\la$ increases. Now we discuss the vital role played by the condition $E^{-}_{\la}\subset B^{-}$ to determine the nature of Nehari manifold. In view of above discussion, this condition is always satisfied when $\la<\la_1$ and may or may not be satisfied when $\la>\la_1$.

%%%%%%%%%%%%%%%%%%%%%%%%%%%%%%%%%%%%%%%%%%%%%%%%%%%%%%%%%%%%%%%%%%%%%%%%%%%%%%%%%%%%%
\begin{Theorem}\label{tt1}
Suppose there exists $\la_0$ such that for all $\la<\la_0$,
$E^{-}_{\la}\subset B^{-}$. Then for all $\la<\la_0$ we have the
following
\begin{enumerate}
\item[(1)] $E^{0}_{\la}\subseteq B^{-}$ and so $E^{0}_{\la}\cap B^{0}=
\emptyset$.
\item[(2)]  $\mc N_{\la}^{+}$ is bounded.
\item[(3)] $0\not\in \overline{\mc N_{\la}^{-}}$ and $\mc N_{\la}^{-}$ is closed.
\item[(4)] $\overline{\mc N_{\la}^{+}}\cap\mc N_{\la}^{-}=\emptyset$.
\end{enumerate}
\end{Theorem}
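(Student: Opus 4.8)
The plan is to exploit the homogeneity $E_\la(tu)=t^pE_\la(u)$ and $B(tu)=t^\ba B(u)$, so that the signs of $E_\la$ and $B$ depend only on the direction $w=u/\|u\|$, and to reduce everything to estimates on the unit sphere $\{\|w\|=1\}$. The analytic tools I would use repeatedly are the compact embedding $X_0\hookrightarrow\hookrightarrow L^\ba(\Om)$ (and the continuous embedding $X_0\hookrightarrow L^p(\Om)$) from the previous lemmas, so that along bounded sequences weak convergence in $X_0$ forces strong convergence in $L^p$ and $L^\ba$ and hence passage to the limit in $\int_\Om|u|^p$ and in $B$, together with the weak lower semicontinuity of $\|\cdot\|$. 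Part (1) is a perturbation in $\la$; parts (2)--(3) both reduce to a single uniform separation estimate proved by contradiction and compactness; part (4) is then immediate from continuity of $B$.

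For (1), take $u\in E_\la^0$, so $\|u\|=1$ and $\la\int_\Om|u|^p\,dx=1$, in particular $\int_\Om|u|^p\,dx=1/\la>0$. Since $\la<\la_0$ I may choose $\la'\in(\la,\la_0)$; then $E_{\la'}(u)=1-\la'\int_\Om|u|^p\,dx=(\la-\la')/\la<0$, so $u\in E_{\la'}^{-}$. As $\la'<\la_0$, the hypothesis gives $E_{\la'}^{-}\subset B^{-}$, hence $B(u)<0$ and $u\in B^{-}$. Thus $E_\la^0\subseteq B^{-}$, and since $B^{-}\cap B^{0}=\emptyset$ we obtain $E_\la^0\cap B^0=\emptyset$.

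The heart of the argument is the following two claims, which I would prove together by contradiction and compactness. (a) There is $\de>0$ with $E_\la(w)\ge\de$ for every $w$ with $\|w\|=1$ and $B(w)\ge0$; (b) there is $c>0$ with $-B(w)\ge c$ for every $w$ with $\|w\|=1$, $E_\la(w)\le0$ and $B(w)\le0$. In each case a minimizing sequence $w_k$ is bounded, so up to a subsequence $w_k\rightharpoonup w$ in $X_0$ with $w_k\ra w$ strongly in $L^p$ and $L^\ba$, giving $\int_\Om|w_k|^p\ra\int_\Om|w|^p$ and $B(w_k)\ra B(w)$, while weak lower semicontinuity gives $\|w\|^p\le1$. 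In (a), $E_\la(w_k)\ra0$ yields $\la\int_\Om|w|^p=1$, so $w\ne0$ and $E_\la(w)=\|w\|^p-1\le0$ while $B(w)\ge0$; normalizing $\hat w=w/\|w\|$ gives $\hat w\in E_\la^{-}\cup E_\la^{0}$ with $B(\hat w)\ge0$, contradicting the hypothesis and part (1). Claim (b) is analogous: here $E_\la(w_k)\le0$ forces $\|w\|_p^p\ge1/\la$, so $w\ne0$ automatically, and the limit satisfies $B(w)=0$ with $\hat w\in E_\la^{-}\cup E_\la^{0}\subset B^{-}$, again a contradiction. With these in hand, write $u\in\mc N_\la$ as $u=tw$, $\|w\|=1$; the Nehari relation $E_\la(u)=B(u)$ reads $t^{p-\ba}=B(w)/E_\la(w)$. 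On $\mc N_\la^{+}$ one has $w\in E_\la^{+}\cap B^{+}$, and since $0<B(w)\le\ov{b}|\Om|^{1-\ba/p}\|w\|_p^\ba$ is bounded above (by the embedding $\|w\|_p\le C$) while $E_\la(w)\ge\de$, the ratio and hence $\|u\|$ is bounded, proving (2). On $\mc N_\la^{-}$ one has $w\in E_\la^{-}\cap B^{-}$, and $t^{p-\ba}=(-B(w))/(-E_\la(w))\ge c/(\la\|w\|_p^p)$ is bounded below by a positive constant, so $\|u\|$ is bounded away from $0$, giving $0\notin\ov{\mc N_\la^{-}}$.

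Closedness of $\mc N_\la^{-}$ then follows since $E_\la$ and $B$ are continuous on $X_0$, so $\mc N_\la$ is closed: if $u_k\in\mc N_\la^{-}$ and $u_k\ra u$, then $u\in\mc N_\la$ with $B(u)=\lim B(u_k)\le0$, and $u\ne0$ because $0\notin\ov{\mc N_\la^{-}}$; were $B(u)=0$, then $E_\la(u)=0$ and $u/\|u\|\in E_\la^0\subset B^{-}$ would force $B(u)<0$, a contradiction, so $B(u)<0$ and $u\in\mc N_\la^{-}$. Finally (4) is immediate: $B$ is continuous and $B>0$ on $\mc N_\la^{+}$, so $\ov{\mc N_\la^{+}}\subseteq\{B\ge0\}$, whereas $\mc N_\la^{-}\subseteq\{B<0\}$, and these sets are disjoint. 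The main obstacle is the uniform separation estimate of the third paragraph: the compactness step must be combined carefully with the dichotomy $w=0$ versus $w\ne0$, and it is precisely there that the standing hypothesis $E_\la^{-}\subset B^{-}$ together with the already-established part (1) are indispensable.
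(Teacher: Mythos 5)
Your proof is correct, and it rests on the same compactness mechanism as the paper's (normalized sequences, the compact embedding into $L^{p}(\Om)$ and $L^{\ba}(\Om)$, weak lower semicontinuity, the strong-versus-nonstrong convergence dichotomy, and contradictions with the hypothesis $E^{-}_{\la}\subset B^{-}$ or with part (1)), but it is organized in a genuinely different way. The paper argues by contradiction directly inside the Nehari sets: assuming $\mc N_{\la}^{+}$ unbounded (resp.\ $0\in\overline{\mc N_{\la}^{-}}$), it normalizes the offending sequence $v_k=u_k/\|u_k\|$ and uses the Nehari identity $E_{\la}(v_k)=\|u_k\|^{\ba-p}B(v_k)\ra 0$ to run the dichotomy sequence by sequence. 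You instead isolate two uniform separation constants on the unit sphere, namely $E_{\la}\geq\de$ on $\{B\geq0\}$ and $-B\geq c$ on $\{E_{\la}\leq0,\ B\leq0\}$, and then read off (2) and (3) from the fibering scaling $t^{p-\ba}=B(w)/E_{\la}(w)$; this buys explicit quantitative bounds, $\|u\|\leq\left(\ov{b}\,|\Om|^{1-\ba/p}C^{\ba}/\de\right)^{1/(p-\ba)}$ on $\mc N_{\la}^{+}$ and $\|u\|\geq\left(c/(\la C^{p})\right)^{1/(p-\ba)}$ on $\mc N_{\la}^{-}$, where the paper obtains only the qualitative conclusions. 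Your part (4) is also cleaner: continuity of $B$ gives $\overline{\mc N_{\la}^{+}}\subseteq\{B\geq0\}$ while $\mc N_{\la}^{-}\subseteq\{B<0\}$, whereas the paper routes the argument through $E^{0}_{\la}\cap B^{0}=\emptyset$. One expository point you should make explicit: when you take a minimizing sequence in claim (a) and assert $E_{\la}(w_k)\ra0$, you are tacitly in the case where the infimum equals $0$; if instead some $w_k$ satisfies $E_{\la}(w_k)\leq 0$ with $B(w_k)\geq0$, that single element already contradicts the hypothesis together with part (1), so no limit argument is needed there. With that remark (and the standing assumption $\la>0$, used when you divide by $\la$, exactly as in the paper), your argument is complete.
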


\begin{proof} $(1)$ Suppose this is not true. Then there exists $u\in
E^{0}_{\la}$ such that $u\not\in B^{-}$. If we take $\mu$ such
that $\la<\mu<\la_0$, then $u\in E^{-}_{\mu}$ and so
$E^{-}_{\mu}\not\subseteq B^{-}$ which gives a contradiction. Thus $E^{0}_{\la}\subseteq B^{-}$ and so $E^{0}_{\la}\cap B^{0}=
\emptyset$.
\begin{enumerate}
\item[$(2)$] Suppose $\mc N_{\la}^{+}$ is not bounded. Then there exists a
sequence $\{u_k\}\subseteq \mc N_{\la}^{+}$ such that
$\|u_k\|\ra\infty$ as $k\ra \infty$. Let $v_k=\frac{u_k}{\|u_k\|}$.
Then we may assume that up to a subsequence $v_k\rightharpoonup v_0$ weakly in $X_0$ and
so $v_k\ra v_0$ strongly in $L^{p}(\Om)$ for every $1\leq p<p^*$.
Also $\int_{\Om}b|v_k|^{\ba}>0$ as $u_k\in \mc N_{\la}^{+}$ and
so $\int_{\Om}b|v_0|^{\ba}\geq 0$. Since $u_k\in \mc N_{\la}$, we
have
\begin{align*}
\int_{Q} |u_k(x)-u_k(y)|^p K(x-y) dxdy - \la \int_{\Om} |u_k|^{p} dx
= \int_{\Om} b(x)|u_k|^{\ba}dx,
\end{align*}
which implies
\[\|v_k\|^p  - \la \int_{\Om} |v_k|^{p} dx
= \frac{1}{\|u_k\|^{p-\ba}}\int_{\Om} b(x)|v_k|^{\ba}dx \lra
0\;\mbox{as}\; k\ra\infty.\]
\noi Suppose $v_k\not\ra v_0$ strongly
in $X_0$. Then $\|v_0\|^p <\ds\liminf_{k\ra\infty} \|v_k\|^p$
and so
\[\|v_0\|^p -\la\int_{\Om}|v_0|^{p}dx<\lim_{k\ra\infty} \int_{Q} |v_k(x)-v_k(y)|^p K(x-y) dxdy-\la\int_{\Om}|v_k|^{p}dx =0,\]
which implies that $\|v_0\|\not= 0$. If not, then we get $0<0$, a contradiction. Thus $\frac{v_0}{\|v_0\|}\in E^{-}_{\la} \subset B^{-}$ which is
a contradiction as $\int_{\Om} b(x)|v_0|^{\ba}dx\geq 0$. Hence $v_k\ra
v_0$ strongly in $X_0$. Thus $\|v_0\|=1$ and
\[\|v_0\|^p -\la\int_{\Om}|v_0|^{p}dx= \lim_{k\ra\infty} \int_{Q} |v_k(x)-v_k(y)|^p K(x-y) dxdy-\la\int_{\Om}|v_k|^{p}dx =0.\]
So $v_0\in E^{0}_{\la}\subseteq B^{-}$ by $(1)$, which is again a contradiction as $\int_{\Om} b |v_0|^{\ba} dx \geq 0$.
Hence $\mc N^{+}_{\la}$ is bounded.

\item[$(3)$] Suppose $0\in\overline{\mc N_{\la}^{-}}$. Then there exists
a sequence $\{u_k\}\subseteq \mc N_{\la}^{-}$ such that $\ds\lim_{k\ra\infty} u_k
=0$ in $X_0$. Let $v_k=\frac{u_k}{\|u_k\|}$. Then up to a subsequence
$v_k\rightharpoonup v_0$ weakly in $X_0$ and $v_k\ra v_0$ strongly in
$L^{p}(\Om)$. As $u_k\in \mc N_{\la}^{-}$, we have
\[\int_{Q} |v_k(x)-v_k(y)|^p K(x-y) dxdy - \la \int_{\Om} |v_k|^{p} dx
= \frac{1}{\|u_k\|^{p-\ba}}\int_{\Om} b(x)|v_k|^{\ba}dx\leq 0.\]
Since the left hand side is bounded, it follows that
$\int_{\Om} b(x)|v_0|^{\ba} = \ds\lim_{k\ra\infty}\int_{\Om} b(x)|v_k|^{\ba} =0$.
Now suppose that $v_k\ra v_0$ strongly in $X_0$. Then $\|v_0\|=1$ and so
$v_0\in B_0$. Moreover $\|v_0\|^p - \la \int_{\Om} |v_0|^{p} dx = \ds\lim_{k\ra\infty} \|v_k\|^p - \la \int_{\Om} |v_k|^{p} dx\leq 0,$
which implies that $v_0\in E_{\la}^{0}$ or $E_{\la}^{-}$. Hence
$v_0\in B^{-}$ which is a contradiction. Hence we must have $v_k\not\ra
v_0$ in $X_0$. Thus $\|v_0\|^p - \la \int_{\Om} |v_0|^{p} dx <\ds\lim_{k\ra \infty}\|v_k\|^p - \la \int_{\Om} |v_k|^{p} dx\leq 0,$ which implies that $\|v_0\|\not= 0$. If $\|v_0\|=0$, then we get $0<0$, a contradiction.
Hence $\frac{v_0}{\|v_0\|}\in E^{-}_{\la}\cap B^0$, which is
impossible so $0\not\in \oline{\mc N_{\la}^{-}}$.

\noi We now show that $\mc N_{\la}^{-}$ is a closed set. Let
$\{u_k\}\subseteq \mc N_{\la}^{-}$ be such that $u_k \ra u$ strongly in $X_0$. Then
$u\in \overline{\mc N_{\la}^{-}}$ and so $u\not\equiv 0$. Moreover,
 $\|u\|^p - \la \int_{\Om} |u|^{p} dx= \int_{\Om} b(x)|u|^{\ba}dx \leq 0.$ If both the integral  equal to
zero, then $\frac{u}{\|u\|}\in E^{0}_{\la}\cap B^{0}$, which gives a
contradiction to $(1)$. Hence both the integral must be negative, so $u\in \mc N^{-}_{\la}$. Thus $\mc N_{\la}^{-}$ is closed.

\item[$(4)$] Let $u\in \overline{\mc N_{\la}^{+}}\cap\mc N_{\la}^{-}$. Then
$0\not\equiv u\in \mc N_{\la}^{-}$ and moreover
\begin{align*}
\int_{Q} |u(x)-u(y)|^p K(x-y) dxdy - \la \int_{\Om} |u|^{p} dx =
\int_{\Om} b(x)|u|^{\ba}dx =0 .
\end{align*}
Thus $\frac{u}{\|u\|}\in E^{0}_{\la}\cap B^{0}$, which is a
contradiction and hence the result.\QED
\end{enumerate}
\end{proof}
%%%%%%%%%%%%%%%%%%%%%%%%%%%%%%%%%%%%%%%%%%%%%%%%%%%%%%%%%%%%%%%%%%%%%%%%%%%%%%%%%%%%%%%%%%%%%%%%%%%%%%%%%%%%%%%%%%

%%%%%%%%%%%%%%%%%%%%%%%%%%%%%%%%%%%%%%%%%%%%%%%%%%%%%%%%%%%%%%%%%%%%%%%%%%%%%%%%%%%%%%%%%%%%%%%%%%%%%%%%%%%%%%%%%%%%%%%%%%%%%%%%%%%%%%%
\begin{Lemma}\label{le1}
Suppose there exists $\la_0$ such that for all $\la<\la_0$,
$E^{-}_{\la}\subset B^{-}$. Then for all $\la<\la_0$ we have,
\begin{enumerate}
\item[$(i)$] $J_{\la}$ is bounded below on $\mc N_{\la}^{+}$.
\item[$(ii)$] $J_{\la}$ is bounded below on $\mc N_{\la}^{-}$ and moreover
$\ds\inf_{u\in \mc N_{\la}^{-}}J_{\la}(u)>0$ provided $\mc N_{\la}^{-}$
is non-empty.
\end{enumerate}
\end{Lemma}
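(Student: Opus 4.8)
The plan is to use the identity, valid for every $u\in\mc N_\la$, that
\[J_\la(u)=\Big(\tfrac1p-\tfrac1\ba\Big)\big(\|u\|^p-\la\textstyle\int_\Om|u|^p\,dx\big)=\Big(\tfrac1p-\tfrac1\ba\Big)\int_\Om b(x)|u|^\ba\,dx,\]
together with the fact that $\tfrac1p-\tfrac1\ba<0$ (since $1<\ba<p$). For part $(i)$ I would invoke Theorem \ref{tt1}$(2)$, which says $\mc N_\la^+$ is bounded, say $\|u\|\le M$ there. As $\la>0$ forces $\int_\Om|u|^p\,dx\ge0$, on $\mc N_\la^+$ one has $0<E_\la(u)=\|u\|^p-\la\int_\Om|u|^p\,dx\le\|u\|^p\le M^p$; multiplying by the negative constant $\tfrac1p-\tfrac1\ba$ gives $J_\la(u)\ge\big(\tfrac1p-\tfrac1\ba\big)M^p$. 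Hence $(i)$ is essentially a corollary of the boundedness of $\mc N_\la^+$.

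For part $(ii)$, the same identity gives $J_\la(u)=\big(\tfrac1p-\tfrac1\ba\big)B(u)>0$ on $\mc N_\la^-$, where $B(u)<0$, so only the strict positivity of the infimum requires work. I would argue by contradiction: assuming $\mc N_\la^-\neq\emptyset$ and $\inf_{\mc N_\la^-}J_\la=0$, choose $\{u_k\}\subset\mc N_\la^-$ with $J_\la(u_k)\to0$ and set $v_k:=u_k/\|u_k\|$. The Nehari relation $\phi_{v_k}'(\|u_k\|)=0$ yields $\|u_k\|^{p-\ba}=B(v_k)/E_\la(v_k)$, and substituting this back produces the closed form
\[J_\la(u_k)=\Big(\tfrac1\ba-\tfrac1p\Big)\frac{|B(v_k)|^{p/(p-\ba)}}{|E_\la(v_k)|^{\ba/(p-\ba)}},\]
which is valid because $v_k\in E_\la^-\cap B^-$ makes both $B(v_k)<0$ and $E_\la(v_k)<0$. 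From the variational characterisation of $\la_1$ one has $\la_1\int_\Om|v_k|^p\,dx\le\|v_k\|^p=1$, so $|E_\la(v_k)|=\la\int_\Om|v_k|^p\,dx-1\le\la/\la_1-1$ stays bounded; therefore $J_\la(u_k)\to0$ forces $|B(v_k)|\to0$.

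The crux, which I expect to be the main obstacle, is then a weak/strong convergence dichotomy. Since $\|v_k\|=1$, up to a subsequence $v_k\rightharpoonup v_0$ in $X_0$ and $v_k\to v_0$ strongly in $L^p(\Om)$ and $L^\ba(\Om)$ by the compactness lemma, so continuity of $b$ gives $B(v_0)=\lim_k B(v_k)=0$, i.e.\ $v_0\in B^0$. Moreover $E_\la(v_k)<0$ means $\la\int_\Om|v_k|^p\,dx>1$, and passing to the limit gives $\la\int_\Om|v_0|^p\,dx\ge1$, so $v_0\not\equiv0$. If $v_k\to v_0$ strongly, then $\|v_0\|=1$ and $E_\la(v_0)=\lim_k E_\la(v_k)\le0$, whence $v_0\in(E_\la^-\cup E_\la^0)\cap B^0$: the first alternative contradicts the hypothesis $E_\la^-\subset B^-$ and the second contradicts $E_\la^0\cap B^0=\emptyset$ from Theorem \ref{tt1}$(1)$. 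If instead the convergence is only weak, then $\|v_0\|^p<1$ while $\la\int_\Om|v_0|^p\,dx\ge1$, so $E_\la(v_0)<0$ and $v_0/\|v_0\|\in E_\la^-\subset B^-$, forcing $B(v_0)<0$ and again contradicting $B(v_0)=0$. Either way we reach a contradiction, which establishes $\inf_{\mc N_\la^-}J_\la>0$.
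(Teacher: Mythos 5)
Your argument is correct, and for part $(i)$ it coincides with the paper's, which simply cites the boundedness of $\mc N_{\la}^{+}$ from Theorem \ref{tt1}; for part $(ii)$ the overall skeleton is also the same --- contradiction, normalization $v_k=u_k/\|u_k\|$, weak/strong dichotomy, and final contradictions against $E^{-}_{\la}\subset B^{-}$ and $E^{0}_{\la}\cap B^{0}=\emptyset$ --- but you reach the key intermediate fact $\int_{\Om}b(x)|v_k|^{\ba}dx\to 0$ by a genuinely different route. The paper deduces it from Theorem \ref{tt1}(3): since $0\notin\overline{\mc N_{\la}^{-}}$, the norms $\|u_k\|$ stay bounded away from zero, so $\int_{\Om}b(x)|u_k|^{\ba}dx\to0$ and $E_{\la}(u_k)\to0$ (both of which follow from $J_{\la}(u_k)\to0$) transfer to the normalized sequence. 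You instead use the closed-form fibering identity
\[
J_{\la}(u_k)=\Big(\tfrac{1}{\ba}-\tfrac{1}{p}\Big)\frac{|B(v_k)|^{\frac{p}{p-\ba}}}{|E_{\la}(v_k)|^{\frac{\ba}{p-\ba}}},
\]
together with the eigenvalue bound $|E_{\la}(v_k)|\le \la/\la_1-1$ (legitimate, since $\mc N_{\la}^{-}\ne\emptyset$ forces $\la>\la_1$), so that $J_{\la}(u_k)\to0$ alone forces $|B(v_k)|\to0$. This buys two things: your part $(ii)$ never invokes Theorem \ref{tt1}(3), and you never need $E_{\la}(v_k)\to0$, since in the dichotomy you obtain $v_0\ne0$ from $\la\int_{\Om}|v_0|^{p}dx\ge1$ and only use that any limit of $E_{\la}(v_k)$ is $\le 0$, treating the alternatives $v_0\in E^{-}_{\la}$ and $v_0\in E^{0}_{\la}$ separately (the paper, by contrast, pins $v_0\in E^{0}_{\la}$ exactly in the strong-convergence branch). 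The paper's version is shorter given that Theorem \ref{tt1} is already in hand; yours is more self-contained and quantitative, at the cost of the sign bookkeeping needed to justify the closed-form formula when both $B(v_k)$ and $E_{\la}(v_k)$ are negative.
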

\begin{proof}$(i)$ It follows from the fact that $\mc N_{\la}^{+}$ is bounded.\\
$(ii)$ Suppose $\ds\inf_{u\in \mc N_{\la}^{-}}J_{\la}(u)=0$. Then there exists
a sequence $\{u_k\}\subseteq \mc N_{\la}^{-}$ such that
$J_{\la}(u_k)\ra 0$ as $k\ra\infty$, i.e.
\[\|u_k\|^p - \la \int_{\Om} |u_k|^{p}
dx\ra 0\; \mbox{and}\;  \int_{\Om} b(x)|u_k|^{\ba}dx \ra 0\;\mbox{ as}\; k\ra\infty.\]
Let $v_k=\frac{u_k}{\|u_k\|}$. Then, since $0\not\in \overline{\mc
N_{\la}^{-}}$, $\{\|u_k\|\}$ is bounded away from zero, so
\[\lim_{k\ra\infty} \int_{\Om} b(x)|v_k|^{\ba} dx =0\;\mbox{ and}\; \lim_{k\ra\infty}\left(\|v_k\|^p - \la
\int_{\Om} |v_k|^{p} dx\right) = 0.\]
As $v_k$ is bounded in $X_0$, we may assume that up to a subsequence still denoted by $v_k$ such that $v_k\rightharpoonup v_0$ weakly in $X_0$
and $v_k\ra v_0$ strongly in $L^{p}(\Om)$. Then $\int_{\Om}
b(x)|v_0|^{\ba} dx =0$.\\
If $v_k\ra v_0$ strongly in $X_0$ then we
have $\|v_0\|=1$ and $\|v_0\|^p - \la \int_{\Om} |v_0|^{p} dx = 0$.
i.e. $v_0\in E^{0}_{\la}$. Whereas if, $v_k\not\ra v_0$ then $\|v_0\|^p - \la \int_{\Om} |v_0|^{p} dx < 0$
 i.e $\frac{v_0}{\|v_0\|}\in E^{-}_{\la}$.
In both the cases, we also have $\frac{v_0}{\|v_0\|}\in B^0$, which is
a contradiction. Hence $\ds\inf_{u\in \mc N_{\la}^{-}}J_{\la}(u)>0$.\QED
\end{proof}
%%%%%%%%%%%%%%%%%%%%%%%%%%%%%%%%%%%%%%%%%%%%%%%%%%%%%%%%%%%%%%%%%%%%%%%%%%%%%%%%%%%%%%%%%%%%%%%%%%%

\begin{Theorem}\label{t2}
Suppose there exists $\la_0$ such that $E^{-}_{\la}\subseteq B^{-}$ for all $\la<\la_0$. Then for all
$\la<\la_0$, we have the following \\
$(i)$ there exists a minimizer for $J_{\la}$ on $\mc N_{\la}^{+}$.\\
$(ii)$ there exists a minimizer for $J_{\la}$ on $\mc N_{\la}^{-}$
provided $E^{-}_{\la}$ is non empty.
\end{Theorem}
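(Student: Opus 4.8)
The plan is to run the direct method of the calculus of variations separately on $\mc N_{\la}^{+}$ and $\mc N_{\la}^{-}$, translating the constrained minimization into the behaviour of the fibering maps $\phi_{u}$ and feeding in the structural facts already established: $\mc N_{\la}^{+}$ is bounded and $\mc N_{\la}^{-}$ is closed with $0\notin\overline{\mc N_{\la}^{-}}$ (Theorem \ref{tt1}), while $J_{\la}$ is bounded below on both with $\inf_{\mc N_{\la}^{-}}J_{\la}>0$ (Lemma \ref{le1}). Throughout I would use that a bounded sequence in $X_0$ has a subsequence converging weakly in $X_0$ and strongly in $L^{p}(\Om)$ and $L^{\ba}(\Om)$ (the compact embedding lemma, since $1<\ba<p<p^{*}$), so that $\int_{\Om}|u_k|^{p}$ and $\int_{\Om}b|u_k|^{\ba}$ pass to the limit while $\|\cdot\|$ is only weakly lower semicontinuous.

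For $(i)$, set $m^{+}=\inf_{\mc N_{\la}^{+}}J_{\la}$. Since $b$ is sign-changing, $B^{+}\neq\emptyset$, so $\mc N_{\la}^{+}\neq\emptyset$, and because the global minimum value of each fiber map over $E_{\la}^{+}\cap B^{+}$ is negative (Case $3$), $m^{+}<0$. Take a minimizing sequence $\{u_k\}\subseteq\mc N_{\la}^{+}$; it is bounded by Theorem \ref{tt1}(2), so I pass to a weak limit $u_0$ with strong $L^{p},L^{\ba}$ convergence. From $J_{\la}(u_k)=\left(\frac1p-\frac1\ba\right)\int_{\Om}b|u_k|^{\ba}\to m^{+}<0$ and $\frac1p-\frac1\ba<0$ I obtain $\int_{\Om}b|u_0|^{\ba}>0$, so $u_0\neq0$ and $u_0/\|u_0\|\in B^{+}$. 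The hypothesis $E_{\la}^{-}\subseteq B^{-}$ together with Theorem \ref{tt1}(1) ($E_{\la}^{0}\subseteq B^{-}$) rules out $u_0/\|u_0\|\in E_{\la}^{-}\cup E_{\la}^{0}$, so $u_0\in E_{\la}^{+}\cap B^{+}$ as a direction and there is a unique $t_0>0$ with $t_0u_0\in\mc N_{\la}^{+}$. I then claim $u_k\to u_0$ strongly: otherwise $\|u_0\|^{p}<\liminf\|u_k\|^{p}$, whence $\phi_{u_0}(t_0)<\liminf\phi_{u_k}(t_0)$, while $\phi_{u_k}(t_0)\ge\phi_{u_k}(1)=J_{\la}(u_k)$ because $t=1$ is the global minimum of $\phi_{u_k}$; this forces $J_{\la}(t_0u_0)=\phi_{u_0}(t_0)<m^{+}$, contradicting the definition of $m^{+}$. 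Hence convergence is strong, $u_0\in\mc N_{\la}$ by passing to the limit in the constraint, $\int_{\Om}b|u_0|^{\ba}>0$ places $u_0\in\mc N_{\la}^{+}$, and $J_{\la}(u_0)=m^{+}$.

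For $(ii)$, assuming $E_{\la}^{-}$ nonempty, set $m^{-}=\inf_{\mc N_{\la}^{-}}J_{\la}>0$. A minimizing sequence $\{u_k\}\subseteq\mc N_{\la}^{-}$ must first be shown bounded by an argument in the spirit of Theorem \ref{tt1}(3): if $\|u_k\|\to\infty$, then normalizing $v_k=u_k/\|u_k\|$ and dividing the Nehari identity by $\|u_k\|^{p}$ forces both $\int_{\Om}b|v_k|^{\ba}\to0$ and $\|v_k\|^{p}-\la\int_{\Om}|v_k|^{p}\to0$, yielding a limit direction in $E_{\la}^{0}\cap B^{0}$ or $E_{\la}^{-}\cap B^{0}$, which the hypothesis excludes. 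With boundedness I pass to $u_0$ weakly and strongly in $L^{p},L^{\ba}$; from $\int_{\Om}b|u_k|^{\ba}=m^{-}/(\tfrac1p-\tfrac1\ba)+o(1)<0$ I get $\int_{\Om}b|u_0|^{\ba}<0$, so $u_0\neq0$ and $u_0/\|u_0\|\in B^{-}$, while weak lower semicontinuity gives $\|u_0\|^{p}-\la\int_{\Om}|u_0|^{p}\le\int_{\Om}b|u_0|^{\ba}<0$, i.e.\ $u_0/\|u_0\|\in E_{\la}^{-}$. Thus there is a unique $t_0>0$ with $t_0u_0\in\mc N_{\la}^{-}$. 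Since $t=1$ is the global maximum of each $\phi_{u_k}$, $\phi_{u_k}(t)\le\phi_{u_k}(1)=J_{\la}(u_k)$, so weak lower semicontinuity of the norm gives $\phi_{u_0}(t)\le\liminf\phi_{u_k}(t)\le m^{-}$ for every $t$; evaluating at $t_0$ and combining with $J_{\la}(t_0u_0)\ge m^{-}$ yields $J_{\la}(t_0u_0)=m^{-}$, so $t_0u_0$ is the desired minimizer (strong convergence $u_k\to u_0$ can then be recovered using that $\mc N_{\la}^{-}$ is closed).

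The main obstacle is ensuring the weak limit does not degenerate and actually lands in the correct component of the Nehari manifold; this is precisely where the standing hypothesis $E_{\la}^{-}\subseteq B^{-}$, and its corollaries $E_{\la}^{0}\subseteq B^{-}$ and $E_{\la}^{0}\cap B^{0}=\emptyset$, do the work by forbidding the dangerous limit directions in $E_{\la}^{0}\cap B^{0}$ and $E_{\la}^{\pm}\cap B^{0}$. For $(i)$ the delicate point is the strong-convergence step, since the fiber map is minimized at $t=1$ and one must rule out loss of norm in the limit; for $(ii)$ the delicate point is instead the a priori boundedness of the minimizing sequence on the possibly unbounded set $\mc N_{\la}^{-}$, after which the maximization structure of the fiber maps lets the lower-semicontinuity estimate deliver the minimizer directly.
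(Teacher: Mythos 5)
The strong-convergence step in your part $(i)$ is a non sequitur. You argue: $\phi_{u_0}(t_0)<\liminf_k\phi_{u_k}(t_0)$ (weak lower semicontinuity at $t_0$) and $\phi_{u_k}(t_0)\ge\phi_{u_k}(1)=J_{\la}(u_k)$ (since $t=1$ is the global minimum of $\phi_{u_k}$), and from these conclude $\phi_{u_0}(t_0)<m^{+}$. But the two premises only say that $\phi_{u_0}(t_0)$ lies strictly below a quantity which is itself $\ge m^{+}$; that is perfectly compatible with $\phi_{u_0}(t_0)>m^{+}$ and yields no contradiction. In the $\mc N_{\la}^{+}$ case the fiber maps are \emph{minimized} at the critical point, so the inequality $\phi_{u_k}(t)\ge\phi_{u_k}(1)$ points the wrong way for this chaining (it is exactly the opposite situation from the one that makes your part $(ii)$ work). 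The correct chain — which is the paper's — uses minimality of $t_0$ for $\phi_{u_0}$, not minimality of $1$ for $\phi_{u_k}$: if $u_k\not\to u_0$ strongly, then weak lower semicontinuity applied at $t=1$ gives
\[
J_{\la}(t_0u_0)=\phi_{u_0}(t_0)\le\phi_{u_0}(1)=J_{\la}(u_0)<\liminf_{k\ra\infty}J_{\la}(u_k)=m^{+},
\]
contradicting $t_0u_0\in\mc N_{\la}^{+}$ and the definition of $m^{+}$. (The paper sharpens the first inequality to a strict one by observing that the strict lsc inequality forces $t_0=t(u_0)>1$.) With this one-line repair, which uses only facts you already had in hand, your part $(i)$ coincides with the paper's proof; the surrounding steps — boundedness of $\mc N_{\la}^{+}$ from Theorem \ref{tt1}(2), nondegeneracy of the limit via $\int_\Om b|u_0|^{\ba}>0$, and placement of $u_0/\|u_0\|$ in $E_{\la}^{+}\cap B^{+}$ using the hypothesis and Theorem \ref{tt1}(1) — are all sound.

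Your part $(ii)$, by contrast, is correct and takes a slightly different (and arguably cleaner) route than the paper. The paper forces strong convergence of the minimizing sequence by deriving the contradiction $J_{\la}(t(u_0)u_0)<\inf_{\mc N_{\la}^{-}}J_{\la}$; you instead use the maximum property $\phi_{u_k}(t)\le\phi_{u_k}(1)=J_{\la}(u_k)$ together with weak lower semicontinuity to get $\phi_{u_0}(t)\le m^{-}$ for every $t>0$, then evaluate at $t_0$ and sandwich $J_{\la}(t_0u_0)$ between $m^{-}$ and $m^{-}$, exhibiting the minimizer $t_0u_0$ directly without any contradiction argument. This works precisely because for $u_k\in\mc N_{\la}^{-}$ the relevant inequality points the right way, and it is the reason the same trick cannot be transplanted to $(i)$. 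Your boundedness argument for the minimizing sequence in $(ii)$ matches the paper's.
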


\begin{proof}
$(i)$ By Lemma \ref{le1}, $J_{\la}$ is bounded below on $\mc N^{+}_{\la}$.
Let $\{u_k\}\subseteq \mc N_{\la}^{+}$ be a minimizing sequence,
i.e. $\ds\lim_{k\ra\infty} J_{\la}(u_k)=\ds\inf_{u\in \mc
N_{\la}^{+}}J_{\la}(u)<0$ as $J_{\la}(u)<0$ on $\mc N_{\la}^{+}$. Since $\mc N_{\la}^{+}$ is bounded,
we may assume that up to a subsequence still denoted by $\{u_k\}$ such
that $u_k\rightharpoonup u_0$ weakly in $X_0$ and $u_k\ra u_0$ strongly in
$L^{p}(\Om)$. Since $J_{\la}(u_k)= \left(\frac{1}{p}
-\frac{1}{\ba}\right)\int_{\Om} b(x)|u_k|^{\ba} dx $. It follows
that $\int_{\Om} b(x)|u_0|^{\ba} dx= \ds\lim_{k\ra\infty}\int_{\Om}
b(x)|u_k|^{\ba} dx>0$ and so $u_0 \not\equiv 0$ a.e. in $\mb R^n$ and
$\frac{u_0}{\|u_0\|}\in B^{+}$. Also by Theorem \ref{tt1}, $\frac{u_0}{\|u_0\|}\in
E^{+}_{\la}$. Thus by the fibering map analysis, $\phi_{u_0}$ has a unique minimum at $t(u_0)$ such that
$t(u_0)u_0\in \mc N_{\la}^{+}$. Now we claim that $u_k\ra u_0$ strongly in $X_0$. Suppose $u_k\not\ra u_0$ in $X_0$. Then
\begin{align*}
\|u_0\|^p - \la \int_{\Om} |u_0|^{p}< \lim_{k\ra\infty}(\|u_k\|^p - \la
\int_{\Om} |u_k|^{p} dx)= \lim_{k\ra\infty} \int_{\Om} b(x)|u_k|^{\ba}dx= \int_{\Om}
b(x)|u_0|^{\ba} dx
\end{align*}
and so $t(u_0)>1$. Hence
\[J_{\la}(t(u_0) u_{0}) < J_{\la}(u_{0})< \lim_{k\ra \infty} J_{\la}(u_k)= \inf_{u\in\mc N_{\la}^{+}}J_{\la}(u),\]
which is a contradiction. Thus we must have $u_k\ra u_{0}$ in
$X_0$, $u_{0}\in \mc N_{\la}$ and $u_{0}\in \mc
N_{\la}^{+}$. If $u_0\in \mc N_{\la}^{0}$ then $\int_{\Om}
b(x)|u_0|^{\ba} dx =0$ and $\|u_0\|^p -
\la \int_{\Om} |u_0|^{p} dx=0$. This implies that $0\not\equiv u_0\in E^{0}_{\la}\cap B^{0}$, a contradiction as $E^{-}_{\la}\cap B^0=\emptyset$, which is proved in Theorem \ref{tt1} $(1)$.\\

%%%%%%%%%%%%%%%%%%%%%%%%%%%%%%%%%%%%%%%%%%%%%%%%%%%%%%%%%%%%%%%%%%%%%%%%%%%%%

\noi $(ii)$ Let $\{u_k\}$ be a minimizing sequence for $J_{\la}$ on $\mc
N_{\la}^{-}$. Then by Lemma \ref{le1}, we must have
$\ds\lim_{k\ra\infty} J_{\la}(u_k)=\inf_{u\in \mc
N_{\la}^{-}}J_{\la}(u)>0.$ Now we claim that $\{u_k\}$ is a bounded
sequence. Suppose this is not true. Then there exists a subsequence $\{u_k\}$ such that $\|u_k\|\ra \infty$ as
$k\ra\infty$. Let $v_k=\frac{u_k}{\|u_k\|}$. Since $\{J_{\la}(u_k)\}$ is
bounded, it follows that $\{\int_{\Om} b(x)|u_k|^{\ba} dx\}$ and
$\{\|u_k\|^p -\la \int_{\Om}|u_k|^{p} dx\}$
are bounded and so
\[\lim_{k\ra\infty}\int_{Q} |v_k(x)- v_k(y)|^p K(x-y) dxdy -\la \int_{\Om} |v_k|^{p} dx =\lim_{k\ra\infty} \int_{\Om}b(x)|v_k|^{\ba} dx=0.\]
Since $\{v_k\}$ is bounded, we may assume that
$v_k\rightharpoonup v_0$ weakly in $X_0$ and $v_k\ra v_0$ strongly in
$L^{p}(\Om)$ so that $\int_{\Om} b(x)|v_0|^{\ba}=0$. If $v_k\ra v_0$
strongly in $X_0$ then it is easy to see that $v_0\in
E^{0}_{\la}\cap B^0$ which gives a contradiction by Theorem
\ref{tt1} $(1)$. Hence $v_k\not\ra v_0$ in $X_0$ and so
$$\|v_0\|^p -\la \int_{\Om}|v_0|^{p} dx<\lim_{k\ra\infty}\int_{Q} |v_k(x)-v_k(y)|^p K(x-y) dxdy-\la \int_{\Om}|v_k|^{p} dx=0.$$
Hence $v_0\not\equiv 0$ and $\frac{v_0}{\|v_0\|}\in E^{-}_{\la}\cap B^0$,
which is again a contradiction. Thus $u_k$ is bounded. So we may assume that up to a subsequence $u_k\rightharpoonup
u_0$ weakly in $X_0$ and $u_k\ra u_0$ strongly in $L^{p}(\Om)$. Suppose
$u_k\not\ra u_0$ in $X_0$, then
$$\int_{\Om} b(x)|u_0|^{\ba}dx=\lim_{k\ra\infty} \int_{\Om} b(x)|u_k|^{\ba}dx =\left(\frac{1}{p}-\frac{1}{\ba}\right)^{-1}\lim_{k\ra\infty} J_{\la}(u_k)<0$$
and
\begin{align*}
\|u_0\|^p-\la \int_{\Om}|u_0|^{p} dx&
<\lim_{k\ra\infty}\int_{Q} |u_k(x)-u_k(y)|^p K(x-y) dxdy-\la \int_{\Om}|u_k|^{p} dx\\
&=\lim_{k\ra\infty} \int_{\Om} b(x)|u_k|^{\ba} dx = \int_{\Om}
b(x)|u_0|^{\ba} dx.
\end{align*}
Hence $\frac{u_0}{\|u_0\|}\in E^{-}_{\la}\cap B^{-}$ and so
$t(u_0)u_0\in \mc N_{\la}^{-}$, where
\[t(u_0)=\left[\frac{\int_{\Om} b(x)|u_0|^{\ba} dx}{\|u_0\|^{p} - \la \int_{\Om}
|u_0|^{p} dx}\right]^{\frac{1}{p-\ba}}<1.\]
 Moreover, $t(u_0)u_k \rightharpoonup t(u_0)u_0$ weakly in $X_0$ but $t(u_0)u_k
\not\ra t(u_0)u_0$ strongly in $X_0$ and so
\[J_{\la}(t({u_0}) u_{0}) <   \liminf_{k\ra \infty} J_{\la}(t(u_0)u_k).\]
Since the map $t\longmapsto J_{\la}(tu_k)$ attains its maximum at
$t=1$, we have
\[\liminf_{k\ra \infty} J_{\la}(t(u_0)u_k) \leq \lim_{k\ra \infty} J_{\la}(u_k)= \inf_{u\in\mc N_{\la}^{-}}J_{\la}(u).\]
Hence $J_{\la}(t(u_0) u_{0})< \ds\inf_{u\in\mc N_{\la}^{-}}J_{\la}(u)$,
which is impossible. Thus $u_k\ra u_{0}$ strongly in $X_0$, and it
follows easily that $u_0$ is a minimizer for $J_{\la}$ on $\mc
N_{\la}^{-}$.\QED
\end{proof}

%%%%%%%%%%%%%%%%%%%%%%%%%%%%%%%%%%%%%%%%%%%%%%%%%%%%%%%%%%%%%%%%%%%%%%%%%%%%%%%%%%%%%%%%%%%%%%%%%%%%%%%%%%%%%%%%%
\noi In order to prove the existence of non-negative solutions, we first define some
notations.
\[F_{+}=\int_{0}^{t}f_{+}(x,s) ds,\]
where $$ f_{+}(x,t)= \left\{
\begin{array}{lr}
f(x,t)\quad\mbox{if}\quad t\geq0\\
 0\quad\quad\quad\mbox{if}\quad t<0.\\
\end{array}
\right.
$$
In particular, $f(x,t):= b(x) |t|^{\ba-2}t$. Let $J_{\la}^{+}(u)=\|u\|^{p} - \int_{\Om}F_{+}(x,u) dx $.
Then the functional $J_{\la}^{+}(u)$ is well defined and it is
Fre$\acute{c}$het differentiable at $u\in X_0$ and for any $v\in
X_0$
\begin{align}\label{s1}
\ld J_{\la}^{+\prime }(u),v\rd= \int_{Q}
|u(x)-u(y)|^{p-2}(u(x)-u(y))(v(x)-v(y))K(x-y) dxdy -\int_{\Om}
f_{+}(x,u) v dx.
\end{align}
Moreover $J_{\la}^{+}(u)$ satisfies all the above Lemmas and Theorems. So for
$\la\in (0,\la_0)$, there exists two non-trivial critical points
$u_{\la}\in \mc N_{\la}^{+}$ and $v_{\la}\in \mc N_{\la}^{-}$ respectively.

\noi Now we claim that $u_{\la}$ is non-negative in $\mb R^{n}$.
Take $v=u^{-}\in X_0 ($see Lemma $12$ of \cite{ls} in case of $p=2)$, in \eqref{s1}, where $u^{-}=\max(-u,0)$. Then
\begin{align*}
0=& \ld J_{\la}^{+\prime}(u), u^{-}\rd\\
=& \int_{Q} |u(x)-u(y)|^{p-2}(u(x)-u(y))(u^{-}(x)-u^{-}(y))K(x-y)
dxdy -\int_{\Om} f_{+}(x,u) u^{-}(x) dx\\
=&\int_{Q} |u(x)-u(y)|^{p-2}(u(x)-u(y))(u^{-}(x)-u^{-}(y))K(x-y) dxdy\\
=&\int_{Q} |u(x)-u(y)|^{p-2}((u^{-}(x)-u^{-}(y))^2 + 2 u^{-}(x)u^{+}(y))K(x-y) dxdy\\
\geq& \int_{Q} |u^{-}(x)-u^{-}(y)|^{p}K(x-y) dxdy\\
=&\|u^{-}\|^{p}
\end{align*}
Thus $\|u^{-}\|=0$ and hence $u= u^{+}$. So by taking
$u=u_{\la}$ and $u=v_{\la}$ respectively, we get the non-negative
solutions of \eqref{eq01}.

%%%%%%%%%%%%%%%%%%%%%%%%%%%%%%%%%%%%%%%%%%%%%%%%%%%%%%%%%%%%%%%%%%%%%%%%%%%%%%%%%%%%%%%%%%%%%%%%%%%%%%%%%%%%%%%%%%%%%%%%

\noi Next we study the asymptotic behavior of the minimizers on $\mc N_{\la}^{+}$ as $\la\ra\la_{1}^{-}$.
%%%%%%%%%%%%%%%%%%%%%%%%%%%%%%%%%%%%%%%%%%%%%%%%%%%%%%%%%%%%%%%%%%%%%%%%%%%%%%%%%%%%%%%%%%
\begin{Theorem}\label{t4}
Suppose $\int_{\Om}b(x)\phi_{1}^{\ba} dx >0$. Then
$\ds\lim_{\la\ra\la_{1}^{-}} \inf_{u\in \mc N_{\la}^{+}} J_{\la}(u)=
-\infty$.
\end{Theorem}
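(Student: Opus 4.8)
The plan is to bound the infimum from above by evaluating $J_{\la}$ at a single, explicitly chosen element of $\mc N_{\la}^{+}$, namely a suitable rescaling of the first eigenfunction $\phi_1$, and to show that this value tends to $-\infty$ as $\la\ra\la_1^{-}$. The point is that for $\la<\la_1$ the hypothesis $\int_{\Om}b(x)\phi_1^{\ba}\,dx>0$ places $\phi_1$ in $E_{\la}^{+}\cap B^{+}$, so Case 3 of the fibering map analysis (in the $p$-sublinear regime) guarantees a unique $t(\phi_1)>0$ with $t(\phi_1)\phi_1\in\mc N_{\la}^{+}$, and the explicit formula for $t(\phi_1)$ will let me track its blow-up.

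First I would normalize $\phi_1$ so that $\int_{\Om}|\phi_1|^{p}\,dx=1$, whence $\|\phi_1\|^{p}=\la_1$ from the variational characterization of $\la_1$. Writing $A:=\int_{\Om}b(x)\phi_1^{\ba}\,dx>0$, the denominator appearing in $t(\phi_1)$ becomes $\|\phi_1\|^{p}-\la\int_{\Om}|\phi_1|^{p}\,dx=\la_1-\la>0$, so that
\[
t(\phi_1)=\left[\frac{A}{\la_1-\la}\right]^{\frac{1}{p-\ba}}.
\]
Next I would use the identity valid on $\mc N_{\la}$, namely $J_{\la}(u)=\big(\tfrac1p-\tfrac1\ba\big)\int_{\Om}b(x)|u|^{\ba}\,dx$, applied to $u=t(\phi_1)\phi_1$, together with $\int_{\Om}b|t(\phi_1)\phi_1|^{\ba}=t(\phi_1)^{\ba}A$, to obtain
\[
J_{\la}\big(t(\phi_1)\phi_1\big)=\left(\frac1p-\frac1\ba\right)A\left[\frac{A}{\la_1-\la}\right]^{\frac{\ba}{p-\ba}}.
\]

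Finally I would let $\la\ra\la_1^{-}$. Since $1<\ba<p$ gives $\frac1p-\frac1\ba<0$, while $A>0$ and $[A/(\la_1-\la)]^{\ba/(p-\ba)}\ra+\infty$ (because $\la_1-\la\ra0^{+}$ and the exponent $\ba/(p-\ba)$ is positive), the displayed quantity tends to $-\infty$. As $t(\phi_1)\phi_1\in\mc N_{\la}^{+}$, we have $\inf_{u\in\mc N_{\la}^{+}}J_{\la}(u)\le J_{\la}(t(\phi_1)\phi_1)$, and passing to the limit yields the claim. There is no serious obstacle here: the only things to verify are that $\phi_1\in E_{\la}^{+}\cap B^{+}$ for $\la<\la_1$ (immediate, since $(\la_1-\la)>0$ and $A>0$), so that the rescaled function indeed lands in $\mc N_{\la}^{+}$, and that the negative sign of $\frac1p-\frac1\ba$ combines with the blow-up of $t(\phi_1)$ in the correct direction — both of which are transparent once the explicit formula for $t(\phi_1)$ is in hand.
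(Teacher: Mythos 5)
Your proposal is correct and follows essentially the same route as the paper: both evaluate $J_{\la}$ at the point $t(\phi_1)\phi_1\in\mc N_{\la}^{+}$ supplied by Case 3 of the fibering map analysis, use the Nehari-manifold identity $J_{\la}(u)=\left(\frac1p-\frac1\ba\right)\int_{\Om}b(x)|u|^{\ba}\,dx$ together with the explicit formula for $t(\phi_1)$, and let the factor $(\la_1-\la)^{-\ba/(p-\ba)}$ blow up with the negative constant $\frac1p-\frac1\ba$ in front. Your normalization $\int_{\Om}|\phi_1|^{p}\,dx=1$ is only a cosmetic simplification of the paper's computation.
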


\begin{proof}
Clearly we have $\phi_1\in E^{+}_{\la}\cap B^{+}$ for all $\la<\la_1$ and
hence $t(\phi_1)\phi_1\in \mc N^{+}_{\la}$. Now
\begin{align*}
J_{\la}(t(\phi_1)\phi_1)=& \left(\frac{1}{p}-\frac{1}{\ba}\right)
|t(\phi_1)|^{p}\left(\int_{Q}|\phi_1(x)- \phi_1(y)|^p K(x-y)dxdy -
\la \int_{\Om} |\phi_1|^p dx\right)\\
=&\left(\frac{1}{p}-\frac{1}{\ba}\right)
\frac{(\int_{\Om}b(x)|\phi_1|^{\ba}dx)^{\frac{p}{p-\ba}}}{\left(\int_{Q}|\phi_1(x)-
\phi_1(y)|^p K(x-y)dxdy - \la \int_{\Om}|\phi_1|^p
dx\right)^{\frac{\ba}{p-\ba}}}\\
=&\left(\frac{1}{p}-\frac{1}{\ba}\right)
\frac{1}{(\la_1-\la)^{\frac{\ba}{p-\ba}}}\frac{(\int_{\Om}b(x)|\phi_1|^{\ba}dx)^{\frac{p}{p-\ba}}}{\left(
\int_{\Om}|\phi_1|^p dx\right)^{\frac{\ba}{p-\ba}}}
\end{align*}
 Then $\ds\inf_{u\in\mc N_{\la}^{+}} J_{\la}(u)\leq J_{\la}(t(\phi_1)\phi_1)\ra -\infty$ as $\la\ra \la_{1}^{-}$. Hence the result. \QED
\end{proof}
%%%%%%%%%%%%%%%%%%%%%%%%%%%%%%%%%%%%%%%%%%%%%%%%%%%%%%%%%%%%%%%%%%%%%%%%%%%%%%%%%%%%%%%%%%%%%%55

\begin{Corollary}\label{c1}
Let $\int_{\Om} b(x)\phi_{1}^{\ba}dx >0$. Then for every $\la<\la_1$,
there exists a minimizer $u_{\la}$ on $\mc N_{\la}^{+}$ such that
$\ds\lim_{\la\ra\la_1^{-}} \|u_{\la}\|=\infty$.
\end{Corollary}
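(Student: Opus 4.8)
The plan is to read off this Corollary as a direct consequence of the existence result in Theorem \ref{t2} and the divergence of the infimum in Theorem \ref{t4}. First I would fix the family of minimizers. For $\la<\la_1$ we have $\|u\|^p-\la\int_\Om|u|^p\,dx>0$ for all nonzero $u$, so $E^{-}_\la=\emptyset$ and in particular the hypothesis $E^{-}_\la\subseteq B^{-}$ of Theorem \ref{t2} holds trivially with $\la_0=\la_1$. Hence for each $\la<\la_1$ Theorem \ref{t2}(i) supplies a minimizer $u_\la$ of $J_\la$ on $\mc N_\la^{+}$, and these are the functions whose blow-up I want to establish.

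Next I would convert the energy divergence into divergence of $B(u_\la)=\int_\Om b(x)|u_\la|^\ba\,dx$. By Theorem \ref{t4}, the hypothesis $\int_\Om b(x)\phi_1^\ba\,dx>0$ gives $J_\la(u_\la)=\inf_{u\in\mc N_\la^{+}}J_\la(u)\to-\infty$ as $\la\to\la_1^{-}$. The key algebraic identity is that on $\mc N_\la$ one has $J_\la(u)=\left(\frac1p-\frac1\ba\right)B(u)$, and since $1<\ba<p$ the prefactor $\frac1p-\frac1\ba$ is strictly negative. Consequently $J_\la(u_\la)\to-\infty$ forces $B(u_\la)\to+\infty$ as $\la\to\la_1^{-}$.

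Finally I would transfer this divergence to the norm by bounding $B$ from above. Writing $\overline{b}=\sup_{x\in\Om}b(x)>0$, Hölder's inequality with exponents $p/\ba$ and $p/(p-\ba)$ gives $B(u_\la)\le\overline{b}\,|\Om|^{1-\frac{\ba}{p}}\|u_\la\|_p^{\ba}$, and the variational characterization of $\la_1$ yields the Poincaré-type bound $\|u_\la\|_p^{p}\le\la_1^{-1}\|u_\la\|^{p}$. Combining these, $B(u_\la)\le\overline{b}\,|\Om|^{1-\frac{\ba}{p}}\la_1^{-\frac{\ba}{p}}\|u_\la\|^{\ba}$, so the divergence $B(u_\la)\to+\infty$ propagates to $\|u_\la\|\to\infty$, which is the claim.

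I do not expect a genuine obstacle, since the statement essentially repackages Theorems \ref{t2} and \ref{t4}. The one point deserving care is the direction of the final estimate: one must bound $B(u_\la)$ \emph{above} by a multiple of $\|u_\la\|^{\ba}$ (via Hölder and Poincaré), so that the blow-up of $B(u_\la)$ forces $\|u_\la\|\to\infty$; the reverse inequality would be useless here.
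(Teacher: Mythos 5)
Your proposal is correct and is essentially the argument the paper intends: the corollary is stated without proof precisely because it is the combination of Theorem \ref{t2} (existence of the minimizer, valid since $E^{-}_{\la}=\emptyset$ for $\la<\la_1$) with Theorem \ref{t4} (divergence of the infimum), converted into norm blow-up via the Nehari identity $J_{\la}(u)=\bigl(\tfrac1p-\tfrac1\ba\bigr)\int_{\Om}b(x)|u|^{\ba}\,dx$ and the H\"older--Poincar\'e bound. Your care about the direction of the final estimate (bounding $B(u_{\la})$ \emph{above} by $C\|u_{\la}\|^{\ba}$) is exactly the right point, and the sign of the prefactor $\tfrac1p-\tfrac1\ba<0$ in the sublinear range $1<\ba<p$ is handled correctly.
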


\noi {\bf Proof of Theorem \ref{sb1}:} Theorem \ref{sb1} follows easily from Theorem \ref{t2}, \ref{t3} and Corollary \ref{c1}.\\

\noi Now we discuss the $p-$sublinear problem with $\la>\la_{1}^{+}$ and $\int_{\Om} b(x)\phi_{1}^{\ba} dx <0$. In this case the hypotheses of Theorem \ref{tt1} hold some way to the right of $\la=\la_1$. More precisely,
\begin{Lemma}\label{les3}
Suppose $\int_{\Om}b(x)\phi_{1}^{\ba} dx <0$. Then there exists
$\de_1, \de_2>0$ such that $u\in E_{\la}^{-}$ implies $\int_{\Om} b|u|^{\ba}dx \leq -\de_2$ whenever
$\la_1< \la \leq \la_1+\de_1$.
\end{Lemma}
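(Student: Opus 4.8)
The plan is to argue by contradiction. Suppose the conclusion fails; then no pair $\de_1,\de_2>0$ works, so taking $\de_1=\de_2=\frac1k$ I may extract sequences $\la_k\ra\la_1^{+}$ with $\la_k>\la_1$ and $u_k\in E_{\la_k}^{-}$ (thus $\|u_k\|=1$ and $\|u_k\|^p-\la_k\int_\Om|u_k|^p\,dx<0$) satisfying $\int_\Om b(x)|u_k|^{\ba}\,dx>-\frac1k$. In particular $\liminf_{k\ra\infty}\int_\Om b(x)|u_k|^{\ba}\,dx\geq0$. The aim is to show that $\{u_k\}$ must concentrate on the first eigenfunction, which forces $\int_\Om b|u_k|^{\ba}\,dx$ to converge to a strictly negative limit, contradicting the displayed bound.

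Since $\|u_k\|=1$, the sequence is bounded in $X_0$, so by reflexivity and the compact embedding of $X_0$ into $L^{\ba}(\Om)$ established above, up to a subsequence $u_k\rightharpoonup u_0$ weakly in $X_0$ and $u_k\ra u_0$ strongly in $L^p(\Om)$ and in $L^{\ba}(\Om)$. I would then pass to the limit in the inequality defining $E_{\la_k}^{-}$: from $1=\|u_k\|^p<\la_k\int_\Om|u_k|^p\,dx$, using $\la_k\ra\la_1$ and $\int_\Om|u_k|^p\,dx\ra\int_\Om|u_0|^p\,dx$, one gets $\la_1\int_\Om|u_0|^p\,dx\geq1$. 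On the other hand, weak lower semicontinuity of the norm gives $\|u_0\|^p\leq\liminf_k\|u_k\|^p=1$, while \eqref{a1} yields the Poincar\'e-type bound $\|u_0\|^p\geq\la_1\int_\Om|u_0|^p\,dx$. Chaining these three estimates forces $1\geq\|u_0\|^p\geq\la_1\int_\Om|u_0|^p\,dx\geq1$, so all the inequalities are in fact equalities.

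The decisive point is reading off what these equalities mean. Since $\la_1\int_\Om|u_0|^p\,dx=1>0$, we have $u_0\not\equiv0$, and the equality $\|u_0\|^p=\la_1\int_\Om|u_0|^p\,dx$ says exactly that $u_0$ realizes the infimum defining $\la_1$; hence $u_0$ is a first eigenfunction, and by the simplicity of $\la_1$ proved in \cite{gf} we conclude $u_0=c\,\phi_1$ for some $c\neq0$. Because $b$ is continuous and bounded on $\oline\Om$ and $u_k\ra u_0$ in $L^{\ba}(\Om)$, the functional $u\mapsto\int_\Om b|u|^{\ba}\,dx$ is continuous along the sequence, so $\int_\Om b|u_k|^{\ba}\,dx\ra\int_\Om b|u_0|^{\ba}\,dx=|c|^{\ba}\int_\Om b\phi_1^{\ba}\,dx<0$, using the hypothesis $\int_\Om b\phi_1^{\ba}\,dx<0$. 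This contradicts $\liminf_k\int_\Om b|u_k|^{\ba}\,dx\geq0$, proving that such $\de_1,\de_2>0$ must exist.

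The step I expect to be the main obstacle is the identification $u_0=c\phi_1$: one must squeeze the three inequalities into simultaneous equalities and then invoke simplicity of $\la_1$ to exclude any other weak limit, while also checking that $u_0\not\equiv0$ (which here comes for free from $\la_1\int_\Om|u_0|^p\,dx=1$). It is worth emphasizing that the whole argument uses only the compact (strong $L^p$ and $L^{\ba}$) convergence together with weak lower semicontinuity, and never requires strong convergence in $X_0$.
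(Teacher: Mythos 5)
Your proof is correct and is essentially the paper's own argument: the paper likewise argues by contradiction with sequences $\la_k\ra\la_1^{+}$ and $u_k\in E_{\la_k}^{-}$ with $\|u_k\|=1$, passes to a weak limit $u_0$ (strong in $L^p(\Om)$ and $L^{\ba}(\Om)$ by compactness), identifies $u_0$ as a first eigenfunction by combining weak lower semicontinuity of the norm with the Poincar\'e-type inequality \eqref{a1}, invokes simplicity of $\la_1$ to get $u_0=\phi_1$ (your $u_0=c\phi_1$), and derives a contradiction with $\int_{\Om}b(x)\phi_{1}^{\ba}\,dx<0$. The differences are only presentational and, if anything, in your favor: the paper packages the same three facts (weak LSC, the limiting constraint inequality, and \eqref{a1}) as a proof that $u_k\ra u_0$ strongly in $X_0$ before identifying the limit, whereas you force equality in the inequality chain directly and never need strong convergence, and you treat the negation precisely ($\int_{\Om}b|u_k|^{\ba}\,dx>-1/k$, hence $\liminf_{k}\int_{\Om}b|u_k|^{\ba}\,dx\geq 0$) where the paper somewhat loosely asserts $\int_{\Om}b(x)|u_k|^{\ba}\,dx\ra 0$.
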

 \begin{proof}
We will prove this by a contradiction argument. Suppose there exist sequences
$\{\la_k\}$ and $\{u_k\}$ such that $\|u_k\|=1$, $\la_k\ra
\la_{1}^{+}$ and
\[\int_{Q} |u_k(x)-u_k(y)|^p K(x-y) dxdy-\la_k \int_{\Om}|u_k|^{p} dx< 0 \; \mbox{and}\; \int_{\Om} b(x)|u_k|^{\ba}\ra 0.\]
Since $\{u_k\}$ is bounded, we may assume that $u_k\rightharpoonup
u_0$ weakly in $X_0$ and $u_k\ra u_0$ strongly in $L^{p}(\Om)$ for
$1\leq p<\frac{np}{n-p\al}$. We show that $u_k\ra u_0$ strongly in
$X_0$. Suppose this is not true then
$\|u_0\|<\ds\liminf_{k\ra\infty}\|u_k\|$ and
\[\|u_0\|^p -\la_1 \int_{\Om}|u_0|^{p} dx< \liminf_{k\ra\infty}\left(\|u_k\|^p -\la_k \int_{\Om}|u_k|^{p} dx\right)\leq 0,\]
which is impossible. Hence $u_k\ra u_0$ strongly in $X_0$ and so
$\|u_0\|=1$. It follows that
\[(i) \;\|u_0\|^p -\la_1 \int_{\Om}|u_0|^{p}
dx\leq 0\; \;\;\; (ii)\; \int_{\Om}b(x)|u_{0}|^{\ba} dx = 0.\] But
$(i)$ implies that $u_0= \phi_1$ and then from $(ii)$ we get a contradiction as $\int_{\Om}b(x)\phi_{1}^{\ba} dx<0$. \QED
\end{proof}
%%%%%%%%%%%%%%%%%%%%%%%%%%%%%%%%%%%%%%%%%%%%%%%%%%%%%%%%%%%%%%%%%%%%%%%%%%%%%%%%%%%%%%%%%%%%%%%%%%%

%%%%%%%%%%%%%%%%%%%%%%%%%%%%%%%%%%%%%%%%%%%%%%%%%%%%%%%%%%%%%%%%%%%%%%%%%%%%%%%%%%%%%%%%%%%%%%%%%%%%%%%%%%%%%%%%%%
\begin{Theorem}\label{t3}
Suppose $\int_{\Om}b(x)\phi_{1}^{\ba} dx <0$ and $\de_1>0$ is as in
Lemma \ref{les3}. Then for  $\la_1< \la \leq \la_1+\de_1$, there
exist minimizers $u_\la$ and $v_\la$ of $J_{\la}$ on $\mc
N_{\la}^{+}$ and $\mc N_{\la}^{-}$ respectively.
\end{Theorem}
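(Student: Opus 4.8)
The key observation is that Theorem~\ref{t2} produces minimizers on $\mc N_{\la}^{+}$ and $\mc N_{\la}^{-}$ under the single standing assumption that there is some $\la_0$ with $E_{\la}^{-}\subseteq B^{-}$ for all $\la<\la_0$. So the entire task reduces to checking that, under the hypothesis $\int_{\Om}b(x)\phi_{1}^{\ba}\,dx<0$, this inclusion $E_{\la}^{-}\subseteq B^{-}$ persists on an interval $(\,\cdot\,,\la_1+\de_1]$ lying to the \emph{right} of $\la_1$. That is exactly what Lemma~\ref{les3} delivers: for $\la_1<\la\le\la_1+\de_1$, every $u\in E_{\la}^{-}$ satisfies $\int_{\Om}b|u|^{\ba}\,dx\le-\de_2<0$, i.e. $u\in B^{-}$. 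Hence $E_{\la}^{-}\subseteq B^{-}$ throughout that range.

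\textbf{First I would fix $\la$ with $\la_1<\la\le\la_1+\de_1$ and record the two consequences of Lemma~\ref{les3}.} The inclusion $E_{\la}^{-}\subseteq B^{-}$ means the standing hypothesis of Theorem~\ref{t2} (with the role of $\la_0$ played by $\la_1+\de_1$, or rather its application at this individual $\la$) is in force. Consequently part~$(i)$ of Theorem~\ref{t2} yields a minimizer $u_\la$ of $J_{\la}$ on $\mc N_{\la}^{+}$ with no further hypotheses. For part~$(ii)$ of Theorem~\ref{t2} I additionally need $E_{\la}^{-}$ to be non-empty; but for $\la>\la_1$ the computation in the text preceding Theorem~\ref{tt1} shows $\phi_1\in E_{\la}^{-}$, since $\int_{Q}|\phi_1(x)-\phi_1(y)|^pK(x-y)\,dxdy-\la\int_{\Om}|\phi_1|^p\,dx=(\la_1-\la)\int_{\Om}|\phi_1|^p\,dx<0$. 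Thus $E_{\la}^{-}\ne\emptyset$, and Theorem~\ref{t2}$(ii)$ provides a minimizer $v_\la$ of $J_{\la}$ on $\mc N_{\la}^{-}$.

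\textbf{The only subtlety worth flagging is the direction of the inclusion hypothesis in Theorem~\ref{t2} versus Lemma~\ref{les3}.} Theorem~\ref{t2} is phrased with ``$E_{\la}^{-}\subseteq B^{-}$ for all $\la<\la_0$,'' a left-neighbourhood condition, whereas Lemma~\ref{les3} establishes the inclusion on a right-neighbourhood $(\la_1,\la_1+\de_1]$. The point I would emphasize is that the proofs of Theorem~\ref{tt1}, Lemma~\ref{le1} and Theorem~\ref{t2} never use the interval structure of $\{\la:\la<\la_0\}$ beyond the pointwise inclusion $E_{\la}^{-}\subseteq B^{-}$ at the relevant $\la$ (and, in the proof of Theorem~\ref{tt1}$(1)$, the inclusion at a slightly larger parameter $\mu$ to handle $E_{\la}^{0}$). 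Since Lemma~\ref{les3} gives the inclusion on the \emph{whole} interval $(\la_1,\la_1+\de_1]$, the argument of Theorem~\ref{tt1}$(1)$ still applies: given $\la$ in this range and $u\in E_{\la}^{0}$, choosing $\mu$ with $\la<\mu\le\la_1+\de_1$ still forces $u\in E_{\mu}^{-}\subseteq B^{-}$, so $E_{\la}^{0}\cap B^0=\emptyset$. Therefore every ingredient of Theorem~\ref{t2} is available verbatim.

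\textbf{I expect no genuine obstacle here — the content was front-loaded into Lemma~\ref{les3}.} The proof is essentially a bookkeeping exercise: establish $E_{\la}^{-}\subseteq B^{-}$ (Lemma~\ref{les3}) and $E_{\la}^{-}\ne\emptyset$ (the $\phi_1$ computation), then quote Theorem~\ref{t2}$(i)$ and $(ii)$. The proof should read simply: ``By Lemma~\ref{les3}, $E_{\la}^{-}\subseteq B^{-}$ for $\la_1<\la\le\la_1+\de_1$, and since $\phi_1\in E_{\la}^{-}$ for such $\la$, the set $E_{\la}^{-}$ is non-empty; the conclusion then follows immediately from Theorem~\ref{t2}.'' \QED
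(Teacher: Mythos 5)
Your proposal is correct and takes essentially the same route as the paper: its proof likewise notes that $\phi_1\in E^{-}_{\la}$ for $\la>\la_1$ (so $E^{-}_{\la}\neq\emptyset$) and that Lemma \ref{les3} makes the hypotheses of Theorem \ref{t2} hold with $\la_0=\la_1+\de_1$, then quotes Theorem \ref{t2}. Your extra paragraph reconciling the left-neighbourhood phrasing of Theorem \ref{t2} with the right-neighbourhood conclusion of Lemma \ref{les3} merely makes explicit a bookkeeping point the paper passes over silently (the inclusion is vacuous for $\la\le\la_1$ since $E^{-}_{\la}=\emptyset$ there).
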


\begin{proof}
Clearly $\phi_1\in E^{-}_{\la}$ and so $E^{-}_{\la}$ is non-empty
whenever $\la> \la_1$. By Lemma \ref{les3}, the hypotheses of
Theorem \ref{t2} are satisfied with $\la_0= \la_1+\de_1$ and hence
the result follows.\QED
\end{proof}

%%%%%%%%%%%%%%%%%%%%%%%%%%%%%%%%%%%%%%%%%%%%%%%%%%%%%%%%%%%%%%%%%%%%%%%%%%%%%%%%%%%%%%%%%%%%%%%%%%%%%%%%%%%%%%%%%%%%

\begin{Lemma}\label{t5}
Suppose $\int_{\Om} b(x)\phi_{1}^{\ba}dx <0$, then we have
\begin{enumerate}
\item[(i)] $\lim_{\ds \la\ra \la_{1}^{+}}\inf_{u\in\mc N_{\la}^{-}}J_{\la}(u)= +\infty$.
\item[(ii)] If $\la_k\ra \la_{1}^{+}$ and $u_k$ is a minimizer of $J_{\la_k}$ on $\mc N_{\la}^{-}$, then $\ds\lim_{k\ra\infty} \|u_{k}\|= +\infty$.
\end{enumerate}

\end{Lemma}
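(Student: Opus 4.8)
The plan is to prove both statements of Lemma \ref{t5} by exploiting the key estimate from Lemma \ref{les3}, namely that for $\la_1<\la\leq\la_1+\de_1$, every $u\in E_\la^-$ satisfies $\int_\Om b(x)|u|^\ba\,dx\leq-\de_2$ for a fixed $\de_2>0$ independent of $\la$. The strategy rests on the observation that on $\mc N_\la^-$ the functional has the clean form $J_\la(u)=\left(\frac{1}{p}-\frac{1}{\ba}\right)\int_\Om b(x)|u|^\ba\,dx$, and since $1<\ba<p$ the prefactor $\frac1p-\frac1\ba$ is negative, so controlling $\int_\Om b(x)|u|^\ba\,dx$ from below by a large-in-magnitude negative quantity forces $J_\la$ to be large and positive.

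For part $(i)$, first I would take any $u\in\mc N_\la^-$ and write $v=u/\|u\|\in E_\la^-$, so that Lemma \ref{les3} gives $\int_\Om b(x)|v|^\ba\,dx\leq-\de_2$. The defining relation for $\mc N_\la$ yields $\|u\|^p-\la\int_\Om|u|^p\,dx=\int_\Om b(x)|u|^\ba\,dx=\|u\|^\ba\int_\Om b(x)|v|^\ba\,dx$, and combining this with the eigenvalue inequality \eqref{a1}, which gives $\|u\|^p-\la\int_\Om|u|^p\,dx\geq(\la_1-\la)\int_\Om|u|^p\,dx$, I would extract a lower bound on $\|u\|$. Concretely, since the left side is negative and controlled by $(\la_1-\la)$ times a bounded quantity, while the right side equals $\|u\|^\ba$ times something $\leq-\de_2$, rearranging produces an estimate of the shape $\|u\|^{p-\ba}\geq \de_2/(\la-\la_1)\cdot C$, forcing $\|u\|\to\infty$ as $\la\to\la_1^+$. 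Then $J_\la(u)=\left(\frac1p-\frac1\ba\right)\|u\|^\ba\int_\Om b(x)|v|^\ba\,dx\geq\left(\frac1\ba-\frac1p\right)\de_2\,\|u\|^\ba$, and since the infimum is taken over $u\in\mc N_\la^-$ all of which obey this bound with $\|u\|$ blowing up uniformly, I conclude $\inf_{u\in\mc N_\la^-}J_\la(u)\to+\infty$.

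For part $(ii)$, I would apply the same norm estimate directly to the minimizing sequence: if $\la_k\to\la_1^+$ and $u_k$ minimizes $J_{\la_k}$ on $\mc N_{\la_k}^-$, then $u_k\in\mc N_{\la_k}^-$ already satisfies the lower bound $\|u_k\|^{p-\ba}\geq C\de_2/(\la_k-\la_1)$ derived above, so $\|u_k\|\to+\infty$ as $k\to\infty$.

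The main obstacle will be making the lower bound on $\|u\|$ genuinely uniform and correctly oriented. The delicate point is that for $u\in\mc N_\la^-$ the quantity $\|u\|^p-\la\int_\Om|u|^p\,dx$ is negative, so I must handle signs carefully when combining it with \eqref{a1}; the cleanest route is to note that $\|u\|=1$ normalization gives $E_\la(v)=\|v\|^p-\la\int_\Om|v|^p\,dx<0$ while $E_{\la_1}(v)\geq0$ by \eqref{a1}, so $0>E_\la(v)=E_{\la_1}(v)-(\la-\la_1)\int_\Om|v|^p\,dx\geq-(\la-\la_1)\int_\Om|v|^p\,dx$, bounding $|E_\la(v)|\leq(\la-\la_1)\int_\Om|v|^p\,dx$ by a quantity that vanishes as $\la\to\la_1^+$. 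Since $\|u\|^{p-\ba}|E_\la(v)|=|\int_\Om b(x)|u|^\ba\,dx|/\|u\|^\ba\cdot\|u\|^p\cdots$ — here one must track the homogeneity carefully, using $\int_\Om b(x)|v|^\ba\,dx\leq-\de_2<0$ to see that $\|u\|^{p-\ba}=\dfrac{\int_\Om b(x)|v|^\ba\,dx}{E_\la(v)}\geq\dfrac{\de_2}{(\la-\la_1)\int_\Om|v|^p\,dx}$, and bounding $\int_\Om|v|^p\,dx$ above via the embedding $\|v\|_p^p\leq C\|v\|^p=C$, I obtain the uniform blow-up. Verifying the direction of each inequality through this homogeneity bookkeeping is where the real care is needed.
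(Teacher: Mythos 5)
Your proposal is correct and takes essentially the same approach as the paper: both arguments rest on Lemma \ref{les3} together with the homogeneity identity $\|u\|^{p-\ba}=\frac{\int_\Om b(x)|v|^\ba\,dx}{E_\la(v)}$ for $u\in\mc N_\la^{-}$, $v=u/\|u\|$ (the paper packages this as the explicit formula for $t(u)$ applied to a normalized $u\in E_\la^{-}\cap B^{-}$), and both arrive at the same lower bound of order $(\la-\la_1)^{-\ba/(p-\ba)}$ for $J_\la$ on $\mc N_\la^{-}$. The only harmless difference is in part (ii): the paper deduces the blow-up of $\|u_k\|$ as a consequence of (i) (via the Sobolev embedding, since $J_{\la_k}(u_k)\leq C\|u_k\|^{\ba}$), whereas you read it off directly from the uniform estimate $\|u_k\|^{p-\ba}\geq C\de_2/(\la_k-\la_1)$.
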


\begin{proof}
$(i)$ Let $v\in \mc N_{\la}^{-}$. Then $v=t(u)u$ for some $u\in
E^{-}_{\la}\cap B^{-}$. Now $\int_{\Om} b(x) |u|^{\ba} dx<-\de_2$
provided $\la_1< \la\leq \la_1+\de_1$ and
\[0>\|u\|^p - \la\int_{\Om}|u|^p dx \geq \left(1-\frac{\la}{\la_1}\right)\int_{Q} |u(x)-u(y)|^p K(x-y)dxdy = \frac{\la_1-\la}{\la_1},\]
 so that $|\|u\|^p - \la\int_{\Om}|u|^{p}dx|\leq
 \frac{\la-\la_1}{\la_1}$. Hence
\begin{align*}
J_{\la}(v)&=J_{\la}(t(u)u)=
\left(\frac{1}{p}-\frac{1}{\ba}\right)|t(u)|^p \left(\|u\|^p -
\la\int_{\Om}|u|^p dx \right)\\
&=\left(\frac{1}{\ba}-\frac{1}{p}\right) \frac{|\int_{\Om} b
|u|^{\ba}dx|^{\frac{\ba}{p-\ba}}}{|\|u\|^p-\la_1\int_{\Om}|u|^p
dx|^{\frac{\ba}{p-\ba}}}\geq
\left(\frac{1}{\ba}-\frac{1}{p}\right)\frac{\la_{1}^{\frac{\ba}{p-\ba}}\de_{2}^{\frac{\ba}{p-\ba}}}{(\la-\la_1)^{\frac{\ba}{p-\ba}}}.
\end{align*}
Hence $\ds\inf_{v\in \mc N_{\la}^{-}} J_{\la}(v)\ra + \infty$ as
$\la\ra\la_1^{+}$. This proofs $(i)$.

\noi $(ii)$ is a direct consequence of $(i)$. \QED
\end{proof}
\noi {\bf Proof of Theorem \ref{sb2}:} The proof of Theorem \ref{sb2} follows from Theorem \ref{t3} and Lemma \ref{t5}.

%%%%%%%%%%%%%%%%%%%%%%%%%%%%%%%%%%%%%%%%%%%%%%%%%%%%%%%%%%%%%%%%%%%%%%%%%%%%%%%%%%%%%%%%%%%%%%%%%%%%
%\section{Non-existence of solutions}
At the end of this section we obtained some non-existence results for $p-$sublinear case.
\begin{Lemma}\label{les6}
Suppose $E_{\la}^{-}\cap B^{+}\ne\emptyset$. Then there exists
$m >0$ such that for every $\e>0$, there exists $u_{\e}\in
E_{\la}^{-}\cap B^{+}$ such that
\[\int_{Q}
|u_{\e}(x)- u_{\e}(y)|^p K(x-y) dxdy - \la \int_{\Om} |u_{\e}|^{p}
dx<\e \;\mbox{and}\; \int_{\Om} b(x)|u_{\e}|^{\ba} dx > m.\]
\end{Lemma}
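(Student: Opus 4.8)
The plan is to produce the required family by deforming the given direction of $E_\la^-\cap B^+$ across the cone $E_\la^0=\{u:\|u\|=1,\ E_\la(u)=0\}$ while keeping $B$ bounded away from $0$, and then reading off each $u_\e$ from the normalized deformation just before it reaches $E_\la^0$. Fix once and for all a $u^*\in E_\la^-\cap B^+$, so that $\|u^*\|=1$, $E_\la(u^*)<0$ and $B(u^*)=\int_\Om b(x)|u^*|^\ba\,dx>0$. Since $b$ is continuous and sign-changing, $\{b>0\}$ is open and nonempty, so I may fix a small ball $B_r(x_0)\subset\{b>0\}$, a bump $\zeta\in C_c^\infty(B_r(x_0))$ and a fixed amplitude $\eta>0$, and set $\psi_s(x):=\eta\,\zeta(x)\sin(s\,x_1)$ for $s\ge 0$ (so $\psi_0\equiv0$). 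I then study the curve $v_s:=u^*+\psi_s$ in $X_0$, which is continuous in $s$.

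The three estimates I would establish for $v_s$ are: (i) $\|v_s\|_p$ stays bounded, since $\|\psi_s\|_p^p\le\eta^p\int\zeta^p$ is independent of $s$; (ii) $\|v_s\|\to\infty$ as $s\to\infty$, which follows from the reverse triangle inequality $\|v_s\|\ge\|\psi_s\|-\|u^*\|$ together with the blow-up of the Gagliardo energy of the high-frequency bump, $\|\psi_s\|^p\ge\theta\int_Q|\psi_s(x)-\psi_s(y)|^p|x-y|^{-(n+p\al)}\,dx\,dy\to\infty$ as $s\to\infty$ by hypothesis $(b)$; and (iii) $B(v_s)\ge B(u^*)/2>0$ uniformly in $s$. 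For (iii) I would split $B(v_s)=\int_{\Om\setminus B_r(x_0)}b|u^*|^\ba\,dx+\int_{B_r(x_0)}b|u^*+\psi_s|^\ba\,dx$: the second integrand is nonnegative because $b>0$ on $B_r(x_0)$, while the first differs from $B(u^*)$ by $\int_{B_r(x_0)}b|u^*|^\ba\,dx$, which is as small as I wish once $r$ is chosen small, by absolute continuity of $\int|u^*|^\ba$. Combining (i) and (ii) gives $E_\la(v_s)=\|v_s\|^p-\la\|v_s\|_p^p\to+\infty$, whereas $E_\la(v_0)=E_\la(u^*)<0$; by continuity there is a first $s_0>0$ with $E_\la(v_{s_0})=0$ and $E_\la(v_s)<0$ for $0\le s<s_0$.

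Finally I would set $u_s:=v_s/\|v_s\|$ for $s<s_0$. Since $E_\la$ and $B$ are $p$- and $\ba$-homogeneous, $E_\la(u_s)=E_\la(v_s)/\|v_s\|^p<0$ and $B(u_s)=B(v_s)/\|v_s\|^\ba>0$, so $u_s\in E_\la^-\cap B^+$ with $\|u_s\|=1$; moreover $E_\la(u_s)\to0^-$ and $B(u_s)\to B(v_{s_0})/\|v_{s_0}\|^\ba=:b_0>0$ as $s\nearrow s_0$ (note $v_{s_0}\neq0$, as otherwise $B(v_{s_0})=0$ against (iii)). Set $m:=b_0/2$. Then, given $\e>0$, choosing $s$ close enough to $s_0$ yields $u_\e:=u_s$ with $E_\la(u_\e)$ as close to $0$ from below as desired (in particular $\int_Q|u_\e(x)-u_\e(y)|^pK(x-y)\,dx\,dy-\la\int_\Om|u_\e|^p\,dx<\e$) and $\int_\Om b(x)|u_\e|^\ba\,dx=B(u_\e)>m$, which is the claim.

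I expect the main obstacle to be the quantitative control of the nonlocal energy $\|v_s\|$ in step (ii): one must force the Gagliardo seminorm to blow up under the high-frequency perturbation despite the nonlocal cross terms between $u^*$ and $\psi_s$, while simultaneously keeping $\|v_s\|_p$ and the weight functional $B(v_s)$ controlled. The reverse triangle inequality for the norm $\|\cdot\|$ on $X_0$ (a genuine norm by Lemma \ref{l4}) disposes of the cross terms for the blow-up, and the small-ball, fixed-amplitude choice keeps $B$ positive; the continuity of $s\mapsto v_s$ in $X_0$, hence of $s\mapsto E_\la(v_s)$ and $s\mapsto B(v_s)$, then makes the intermediate-value step rigorous.
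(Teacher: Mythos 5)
Your proposal is correct and follows the same overall strategy as the paper's proof: starting from a fixed element of $E_\la^{-}\cap B^{+}$, deform it continuously into the region where $E_\la>0$ while keeping $B$ uniformly bounded below, invoke the intermediate value theorem to locate the crossing of $E_\la^{0}$, and read off the normalized points just before the crossing. The implementations differ in two respects, both worth noting. First, the paper simply \emph{posits} an $h\in X_0$ with arbitrarily small $L^{\infty}$-norm but arbitrarily large $\|h\|$, deforms along $u+\e h$ with the amplitude $\e\in[0,1]$ as parameter, and keeps $B$ bounded below precisely because the perturbation is uniformly small in $L^{\infty}$; you instead construct the perturbation explicitly as a fixed-amplitude, high-frequency bump $\eta\,\zeta(x)\sin(s x_1)$ supported in a small ball inside $\{b>0\}$, use the frequency $s$ as parameter, and keep $B$ bounded below by localization (on the support of the perturbation the integrand $b|u^*+\psi_s|^{\ba}$ is nonnegative) together with absolute continuity of $\int b|u^*|^{\ba}$. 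Your version is therefore more self-contained: it substantiates the existence claim that the paper leaves unproved, at the price of the one fact you flag, namely $[\zeta\sin(s\,\cdot)]_{W^{\al,p}}\to\infty$ as $s\to\infty$; this is genuinely provable (boundedness in $W^{\al,p}$ along a sequence $s_k\to\infty$ plus compact embedding would force strong $L^{p}$-convergence to the weak limit $0$, contradicting $\|\eta\zeta\sin(s_k x_1)\|_{L^p}\not\to 0$), and it is exactly the same kind of assertion the paper makes about $h$. Two small points of equal standing in both arguments: kernel condition $(a)$ is implicitly needed to ensure that smooth compactly supported functions lie in $X_0$ and that $s\mapsto v_s$ is $X_0$-continuous, and both proofs actually deliver the evidently intended stronger conclusion $-\e<E_\la(u_\e)<0$ together with $B(u_\e)>m$ (the literal inequality $E_\la(u_\e)<\e$ in the statement is vacuous for elements of $E_\la^{-}$).
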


\begin{proof}
Let $u\in E_{\la}^{-}\cap B^{+}$ then $\|u\|^{p}-\la \int_{\Om}|u|^{p}dx<0$  and $\int_{\Om} b(x)|u|^{\ba}>0$. We may choose $h\in X_0$ with arbitrary small $L^{\infty}$ norm but
$\|h\|^{p}$ is arbitrary large. Thus we may choose $h$ so that $\int_{\Om}b(x)|u+\e h|^{\ba}dx >\frac{1}{2}\int_{\Om}b(x)|u|^{\ba} dx>0$ for $0\leq\e\leq 1$ and $\|u+h\|^{p} -\la\int_{\Om}|u+h|^{p} dx>0$. Let $u_{\e}=\frac{u+\e h}{\|u+\e h\|}$, then we claim that $u_{\e}\in B^{+}$. In fact we have $\frac{1}{\|u+\e h\|^{\ba}}\int_{\Om} b(x)|u+\e h|^{\ba} dx\geq \frac{1}{2(\|u\|+\|h\|)^{\ba}}\int_{\Om}b(x)|u|^{\ba} dx$. Moreover, we have $u_0\in E^{-}_{\la}$ and $u_1\in E^{+}_{\la}$. Let $\eta(\e)= \|u_\e\|^{p}- \la\int_{\Om}|u_{\e}|^{p} dx$ for $0\leq \e \leq 1$. Then $\eta: [0,1]\ra \mb R$ is a continuous function such that $\eta(0)<0$ and $\eta(1)>0$ and so it is easy to see that for any given $\de>0$ there exist $\e$ such that $u_{\e}$ has required properties.\QED

\end{proof}

%%%%%%%%%%%%%%%%%%%%%%%%%%%%%%%%%%%%%%%%%%%%%%%%%%%%%%%%%%%%%%%%%%%%%%%%%%%%%%%%%%%%%%%%%%%%%%%%%%%%%%%%%%%%%%%%%%

\begin{Lemma}\label{lea4}
$J_{\la}$ is unbounded below on $\mc N_{\la}$ whenever
$E^{-}_{\la}\cap B^{+}\ne \emptyset$.
\end{Lemma}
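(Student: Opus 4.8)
The plan is to exhibit an explicit sequence in $\mc N_{\la}$ along which $J_{\la}\ra-\infty$, built from the family produced in Lemma \ref{les6}. The starting observation is the identity valid on the Nehari manifold, namely $J_{\la}(u)=\left(\frac1p-\frac1\ba\right)\left(\|u\|^{p}-\la\int_{\Om}|u|^{p}dx\right)$, together with the fibering analysis of Case 3 in the $p$-sublinear regime: if $w\in X_0$ with $\|w\|=1$ satisfies $w\in E^{+}_{\la}\cap B^{+}$, then $\phi_{w}$ has a unique global minimum at $t(w)=\left[\frac{\int_{\Om}b(x)|w|^{\ba}dx}{\|w\|^{p}-\la\int_{\Om}|w|^{p}dx}\right]^{1/(p-\ba)}$, and $t(w)w\in\mc N_{\la}^{+}\subset\mc N_{\la}$. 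Writing $s(w):=\|w\|^{p}-\la\int_{\Om}|w|^{p}dx>0$ and $B(w):=\int_{\Om}b(x)|w|^{\ba}dx>0$, the value of the functional at this point is, exactly as computed in Theorem \ref{t4},
\[
J_{\la}(t(w)w)=\left(\frac1p-\frac1\ba\right)\frac{B(w)^{p/(p-\ba)}}{s(w)^{\ba/(p-\ba)}}.
\]

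The strategy is then to drive $s(w)\ra0^{+}$ while keeping $B(w)$ bounded away from zero. Since $1<\ba<p$, we have $\frac1p-\frac1\ba<0$ and the exponents $\frac{p}{p-\ba},\,\frac{\ba}{p-\ba}$ are positive, so if we can find a family $\{w_{\e}\}$ with $w_{\e}\in E^{+}_{\la}\cap B^{+}$, $\|w_{\e}\|=1$, $B(w_{\e})>m$ for a fixed $m>0$, and $s(w_{\e})\ra0^{+}$, then
\[
J_{\la}(t(w_{\e})w_{\e})\leq\left(\frac1p-\frac1\ba\right)\frac{m^{p/(p-\ba)}}{s(w_{\e})^{\ba/(p-\ba)}}\lra-\infty .
\]
Because each $t(w_{\e})w_{\e}$ lies in $\mc N_{\la}^{+}\subset\mc N_{\la}$, this forces $\inf_{\mc N_{\la}}J_{\la}=-\infty$, which is the assertion. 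To obtain such a family I would invoke Lemma \ref{les6}: the continuous path $\e\mapsto u_{\e}$ constructed there runs from $E^{-}_{\la}$ to $E^{+}_{\la}$ while keeping the $B$-integral above $m/2$ (so $u_{\e}\in B^{+}$ throughout), and $\eta(\e)=\|u_{\e}\|^{p}-\la\int_{\Om}|u_{\e}|^{p}dx$ is continuous with $\eta(0)<0<\eta(1)$. By the intermediate value theorem one may select parameters on the $E^{+}_{\la}$ side arbitrarily close to the zero crossing of $\eta$, giving precisely points $w_{\e}\in E^{+}_{\la}\cap B^{+}$ with $s(w_{\e})=\eta(\e)\ra0^{+}$ and $B(w_{\e})>m$.

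The main obstacle is not the final estimate—which is an immediate consequence of the displayed formula once the family is in hand—but guaranteeing that the $B$-integral stays uniformly bounded below as $s(w_{\e})\ra0^{+}$; otherwise the numerator could collapse at the same rate as the denominator and the blow-up would fail. This is exactly the quantitative content of Lemma \ref{les6}, whose perturbation by an element $h$ of small $L^{\infty}$-norm but large seminorm decouples the behavior of $\|\cdot\|^{p}-\la\int_{\Om}|\cdot|^{p}$ from that of $\int_{\Om}b(x)|\cdot|^{\ba}dx$; I would therefore carry out the construction of the sequence through that lemma and reserve the routine verification of continuity of $\eta$ and of the lower bound on $B$ to a brief remark. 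It is worth noting that $t(w_{\e})=\bigl(B(w_{\e})/s(w_{\e})\bigr)^{1/(p-\ba)}\ra+\infty$, so the minimizers escape to infinity in norm, consistent with the divergence of $J_{\la}$ to $-\infty$.
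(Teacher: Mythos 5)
Your proposal is correct and takes essentially the same route as the paper: both rely on Lemma \ref{les6} to produce a normalized family with $\int_{\Om}b(x)|\cdot|^{\ba}dx$ bounded below by a fixed $m>0$ while $s(\cdot)=\|\cdot\|^{p}-\la\int_{\Om}|\cdot|^{p}dx\ra 0^{+}$, then push these points onto $\mc N_{\la}$ via the fibering map and use the identity $J_{\la}(t(w)w)=\left(\frac1p-\frac1\ba\right)B(w)^{p/(p-\ba)}/s(w)^{\ba/(p-\ba)}$ to force $J_{\la}\ra-\infty$. In fact your write-up is slightly more careful than the paper's, which (through apparent typos) labels the approximating sequence as lying in $E^{-}_{\la}\cap B^{+}$ rather than on the $E^{+}_{\la}$ side of the zero crossing of $\eta$, and swaps the exponents $\frac{p}{p-\ba}$ and $\frac{\ba}{p-\ba}$ in the final estimate; your version states both points correctly.
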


\begin{proof}
Let $u\in E^{-}_{\la}\cap B^{+}$. Then by Lemma \ref{les6}, there exists $m>0$ and a sequence $\{u_k\}\subseteq
E^{-}_{\la}\cap B^{+}$ such that $\int_{\Om} b(x) |u_k|^{\ba}dx \geq m$
and $0<\|u_k\|^p-\la_1\int_{\Om}|u_k|^p dx< \frac{1}{k}$. Also
using the same calculation as in Lemma \ref{t5}, we have
\begin{align*}
J_{\la}(t(u_k)u_k)&=\left(\frac{1}{p}-\frac{1}{\ba}\right)
\frac{(\int_{\Om} b(x)
|u_k|^{\ba}dx)^{\frac{p}{p-\ba}}}{(\|u_k\|^p-\la_1\int_{\Om}|u_k|^p
dx)^{\frac{\ba}{p-\ba}}}\\
&<\left(\frac{1}{p}-\frac{1}{\ba}\right)m^{\frac{\ba}{p-\ba}}k^{\frac{p}{p-\ba}}\ra
-\infty\;\mbox{as}\; k\ra\infty.
\end{align*}
Hence $J_{\la}$ is unbounded below on $\mc N_{\la}$.\QED
\end{proof}
%%%%%%%%%%%%%%%%%%%%%%%%%%%%%%%%%%%%%%%%%%%%%%%%%%%%%%%%%%%%%%%%%%%%%%%%%%%%%%%%%%%%%%%%%%%%%%%%%%%%%%%%%%%%%%

\begin{Lemma}\label{le5}
$J_{\la}$ is unbounded below on $\mc N_{\la}$ when either of the
following condition hold:
\begin{enumerate}
\item[$(i)$] $\int_{\Om} b(x)\phi_{1}^\ba dx >0$
and $\la>\la_1$;
\item[$(ii)$] $\la>\la_b$, where $\la_b$ denotes the principal eigenvalue
of
\begin{equation*}
 \quad \left.
\begin{array}{lr}
 \quad -\mc L_{K}u(x) = \la |u(x)|^{p-2}u(x) \; \text{in}\;
\Om^{+} \\
 \quad \quad u = 0 \; \mbox{in}\; \mb R^n \setminus\Om^{+},\\
\end{array}
\quad \right\}
\end{equation*}
\noi with eigenfunction $\phi_b\in X_0$, and $\Om^{+}=\{x\in \Om :
b(x)>0\}$.
\end{enumerate}
\end{Lemma}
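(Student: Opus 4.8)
The plan is to reduce both statements to Lemma \ref{lea4}, which already guarantees that $J_{\la}$ is unbounded below on $\mc N_{\la}$ as soon as $E^{-}_{\la}\cap B^{+}\ne\emptyset$. Hence in each of the two cases it suffices to produce a single test function lying in $E^{-}_{\la}\cap B^{+}$. Since the defining conditions $E_{\la}(u)<0$ and $B(u)>0$ are positively homogeneous (of degrees $p$ and $\ba$ respectively), I may check them on any convenient nonzero representative and then pass to $u/\|u\|$ without rescaling explicitly.

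For case $(i)$ the natural candidate is the first eigenfunction $\phi_1$, taken non-negative so that $\phi_1^{\ba}=|\phi_1|^{\ba}$. Using the identity already recorded in the excerpt, $E_{\la}(\phi_1)=\|\phi_1\|^{p}-\la\int_{\Om}|\phi_1|^{p}dx=(\la_1-\la)\int_{\Om}|\phi_1|^{p}dx$, which is strictly negative precisely when $\la>\la_1$; therefore $\phi_1/\|\phi_1\|\in E^{-}_{\la}$. On the other hand $B(\phi_1)=\int_{\Om}b(x)\phi_1^{\ba}dx>0$ is exactly the hypothesis, so $\phi_1/\|\phi_1\|\in B^{+}$. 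Thus $E^{-}_{\la}\cap B^{+}\ne\emptyset$ and Lemma \ref{lea4} applies.

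For case $(ii)$ I would test with $\phi_b$, the principal eigenfunction of $-\mc L_K$ on $\Om^{+}$, extended by zero outside $\Om^{+}$; since $\Om^{+}\subseteq\Om$ this extension lies in $X_0$. The key computation is that, because $\phi_b$ vanishes outside $\Om^{+}$, its Rayleigh quotient equals $\la_b$, that is $\|\phi_b\|^{p}=\la_b\int_{\Om^{+}}|\phi_b|^{p}dx=\la_b\int_{\Om}|\phi_b|^{p}dx$. Consequently $E_{\la}(\phi_b)=(\la_b-\la)\int_{\Om}|\phi_b|^{p}dx<0$ whenever $\la>\la_b$, so $\phi_b/\|\phi_b\|\in E^{-}_{\la}$. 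Moreover $B(\phi_b)=\int_{\Om^{+}}b(x)|\phi_b|^{\ba}dx>0$, since $b>0$ throughout $\Om^{+}$ and $\phi_b\not\equiv0$ there. Hence $\phi_b/\|\phi_b\|\in E^{-}_{\la}\cap B^{+}$ and Lemma \ref{lea4} again yields the conclusion.

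The only genuinely delicate point is the bookkeeping in case $(ii)$: one must verify that the zero-extension of $\phi_b$ is a legitimate element of $X_0$ and that the nonlocal norm $\|\cdot\|$, an integral over all of $Q$ (including interactions between $\Om^{+}$ and its complement), coincides with the quantity minimized in the Rayleigh characterization of $\la_b$ on $\Om^{+}$. Once this identification is made, the equality $\|\phi_b\|^{p}=\la_b\int_{\Om}|\phi_b|^{p}dx$ holds and all remaining steps are immediate; case $(i)$ requires no such care and is essentially a one-line verification.
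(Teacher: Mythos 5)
Your proposal is correct and takes essentially the same route as the paper: reduce via Lemma \ref{lea4} to showing $E^{-}_{\la}\cap B^{+}\ne\emptyset$, then exhibit $\phi_1$ in case $(i)$ and the zero-extension of $\phi_b$ in case $(ii)$ as the witnesses. The paper asserts these two memberships in one line without verification, whereas you additionally write out the Rayleigh-quotient identity $\|\phi_b\|^{p}=\la_b\int_{\Om}|\phi_b|^{p}dx$ and the compatibility of the nonlocal norm over $Q$ with the eigenvalue problem on $\Om^{+}$; this is the same argument with the omitted details supplied.
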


\begin{proof}
By Lemma \ref{lea4}, it is sufficient to show that $E^{-}_{\la}\cap
B^{+}\ne \emptyset$. If $(i)$ holds, then $\phi_1\in E^{-}_{\la}\cap
B^{+}$. And if $(ii)$ holds, then $\phi_b\in E^{-}_{\la}\cap B^{+}$.
\QED
\end{proof}
%%%%%%%%%%%%%%%%%%%%%%%%%%%%%%%%%%%%%%%%%%%%%%%%%%%%%%%%%%%%%%%%%%%%%%%%%%%%%%%%%%%%%%%%%%%%%%%%%%%%

\section{$p-$Superlinear Case $(p<\ba<p^*)$}
In this section we give the proof of Theorems \ref{sp1}, \ref{sp2} and \ref{sp3}. At the end of this section, we also show the non-existence results.
We note that, for $p<\ba<p^*$, it is not difficult to see that
\begin{align}\label{n1}
\mc N_{\la}^{-}= \left\{u\in \mc N_{\la}: \int_{\Om} b(x)|u|^{\ba}dx >0\right\}\;\mbox{and}\; \mc N_{\la}^{+}= \left\{u\in \mc N_{\la}: \int_{\Om} b(x)|u|^{\ba}dx< 0\right\}.
\end{align}

\subsection{Case when $\la<\la_1$}
 When $0<\la<\la_1$, $\int_{Q} |u(x)-u(y)|^p K(x-y) dxdy -
\la \int_{\Om} |u|^{p} dx > (\la_1-\la)\int_{\Om} |u|^p dx >0$ for all $u\in X_0$ and so
$E_{\la}^{+}=\{u\in X_0 : \|u\|=1\}$. Thus $E_{\la}^{-}$ and
$E_{\la}^{0}$ are empty sets and so $\mc N_{\la}^{+}=\emptyset$ and
$\mc N_{\la}^{0}=\{0\}$. Moreover $\mc N_{\la}^{-}=\{t(u)u: u\in
B^{+}\}$ and $\mc N_{\la}=\mc N_{\la}^{-}\cup \{0\}$.

\begin{Lemma}\label{t31}
\begin{enumerate}
\item[$(i)$] If $0<\la<\la_1$ then $J_{\la}(u)$ is bounded below on $\mc N_{\la}^{-}$. And moreover $\ds\inf_{u\in \mc N_{\la}^{-}}J_{\la}(u)>0$.
\item[$(ii)$] There exists a minimizers of $J_{\la}$ on $\mc N_{\la}^{-}$.
\end{enumerate}
\end{Lemma}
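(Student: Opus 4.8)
The plan is to exploit the Nehari identity, which on $\mc N_{\la}$ collapses $J_{\la}$ to a single term, together with the superlinear fibering picture (Case 3) already set up above.

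For part $(i)$, I would first record that every $u\in\mc N_{\la}$ satisfies $\|u\|^p-\la\int_{\Om}|u|^p dx=\int_{\Om}b(x)|u|^{\ba}dx$, so that
\[J_{\la}(u)=\left(\frac1p-\frac1\ba\right)\left(\|u\|^p-\la\int_{\Om}|u|^p dx\right).\]
Since the definition of $\la_1$ gives $\la_1\int_{\Om}|u|^p dx\le\|u\|^p$, for $0<\la<\la_1$ one has $\|u\|^p-\la\int_{\Om}|u|^p dx\ge(1-\la/\la_1)\|u\|^p\ge0$; as $\ba>p$ this already shows $J_{\la}\ge0$ on $\mc N_{\la}^{-}$, hence bounded below. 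To upgrade this to $\inf>0$ I would bound $\|u\|$ away from zero on $\mc N_{\la}^{-}$: combining the Nehari identity with H\"older's inequality and the Sobolev embedding of Lemma \ref{l3} gives $\int_{\Om}b(x)|u|^{\ba}dx\le C\|u\|^{\ba}$, whereas the same quantity equals $\|u\|^p-\la\int_{\Om}|u|^p dx\ge(1-\la/\la_1)\|u\|^p$; since $\ba>p$ this forces $\|u\|\ge\rho$ for some $\rho=\rho(\la)>0$, and therefore $J_{\la}(u)\ge(\frac1p-\frac1\ba)(1-\la/\la_1)\rho^p>0$.

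For part $(ii)$, I would take a minimizing sequence $\{u_k\}\subseteq\mc N_{\la}^{-}$ with $J_{\la}(u_k)\ra c:=\ds\inf_{u\in\mc N_{\la}^{-}}J_{\la}(u)>0$. The lower bound from part $(i)$ shows $\{u_k\}$ is bounded in $X_0$, so up to a subsequence $u_k\rightharpoonup u_0$ weakly in $X_0$ and $u_k\ra u_0$ strongly in $L^p(\Om)$ and $L^{\ba}(\Om)$. Passing to the limit in $J_{\la}(u_k)=(\frac1p-\frac1\ba)\int_{\Om}b(x)|u_k|^{\ba}dx$ yields $\int_{\Om}b(x)|u_0|^{\ba}dx=c/(\frac1p-\frac1\ba)>0$, whence $u_0\not\equiv0$ and $\frac{u_0}{\|u_0\|}\in B^{+}$. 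Since $\la<\la_1$ forces $\frac{u_0}{\|u_0\|}\in E_{\la}^{+}$, the superlinear fibering analysis provides a unique $t(u_0)>0$ with $t(u_0)u_0\in\mc N_{\la}^{-}$ realizing $\max_{t\ge0}J_{\la}(tu_0)$.

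The main obstacle, and where the actual work lies, is proving $u_k\ra u_0$ strongly in $X_0$. Arguing by contradiction, if the convergence were not strong then $\|u_0\|^p<\ds\lim_{k\ra\infty}\|u_k\|^p$ while $\int_{\Om}|u_k|^p dx$ and $\int_{\Om}b(x)|u_k|^{\ba}dx$ pass to their limits; evaluating the fiber map of $u_k$ at $t(u_0)$ then gives $J_{\la}(t(u_0)u_0)<\ds\lim_{k\ra\infty}J_{\la}(t(u_0)u_k)$, the strictness coming precisely from the failure of strong convergence in the $\|\cdot\|^p$ term. On the other hand, each $u_k\in\mc N_{\la}^{-}$ means $t=1$ maximizes $t\mapsto J_{\la}(tu_k)$, so $J_{\la}(t(u_0)u_k)\le J_{\la}(u_k)$ and hence $\ds\lim_{k\ra\infty}J_{\la}(t(u_0)u_k)\le c$. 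Combining these with $t(u_0)u_0\in\mc N_{\la}^{-}$ yields $c\le J_{\la}(t(u_0)u_0)<c$, a contradiction. Thus $u_k\ra u_0$ strongly, $u_0\in\mc N_{\la}^{-}$ and $J_{\la}(u_0)=c$, so $u_0$ is the sought minimizer. The delicate points are ensuring the $L^{\ba}$ term genuinely passes to the limit (compactness for $\ba<p^*$) and that the semicontinuity inequality is strict.
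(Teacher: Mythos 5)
Your proposal is correct and follows essentially the same route as the paper: part $(i)$ rests on the Nehari identity $J_{\la}(u)=\left(\frac1p-\frac1\ba\right)\left(\|u\|^p-\la\int_{\Om}|u|^p dx\right)$ together with the spectral inequality and the Sobolev bound on $\int_{\Om}b(x)|u|^{\ba}dx$, and part $(ii)$ is the identical weak-convergence argument ending in the contradiction $c\le J_{\la}(t(u_0)u_0)<\liminf_{k}J_{\la}(t(u_0)u_k)\le c$ from the fact that $t=1$ maximizes $t\mapsto J_{\la}(tu_k)$. The only cosmetic difference is in part $(i)$: you bound $\|u\|$ away from zero directly, whereas the paper writes $u=t(v)v$ with $\|v\|=1$ and inserts the Sobolev estimate into the explicit formula for $t(v)$ --- the same computation in different clothing.
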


\begin{proof}
$(i)$ On $\mc N_{\la}$,
\[J_{\la}(u)= \left(\frac{1}{p} -\frac{1}{\ba}\right)\int_{\Om}
b|u|^{\ba} dx = \left(\frac{1}{p} -\frac{1}{\ba}\right)\left[\|u\|^{p}- \la
\int_{\Om} |u|^{p} dx\right],\]
Thus $J_{\la}(u)\geq 0$ whenever $u\in \mc N_{\la}^{-}$. Hence
$J_\la$ is bounded below by $0$ on $\mc N_{\la}^{-}$. Next, we show
that $\ds\inf_{u\in \mc N_{\la}^{-}}J_{\la}(u)>0$. Suppose $u\in \mc
N_{\la}^{-}$. Then $v=\frac{u}{\|u\|}\in E_{\la}^{+}\cap B^{+}$ and
$u=t(v)v$, where
$t(v)=\left[\frac{\|v\|^p-\la\int_{\Om}|v|^p dx}{\int_{\Om}b|v|^{\ba}}\right]^{\frac{1}{\ba-p}}$. Now
\[\int_{\Om}b(x) |v|^{\ba} dx \leq \overline{b}\int_{\Om}|v|^{\ba}dx \leq  \overline{b} K \|v\|^{\ba/p}= \overline{b} K,\]
where $\overline{b}=\ds\sup_{x\in \Om} b(x)$ and $K$ is a Sobolev embedding constant. Hence
\begin{align*}
J_{\la}(u)= J_{\la}(t(v)v)&= \left(\frac{1}{p}
-\frac{1}{\ba}\right)|t(v)|^{p}\left[\|v\|^{p}- \la \int_{\Om}
|v|^{p} dx\right]\\
&= \left(\frac{1}{p} -\frac{1}{\ba}\right)\frac{\left(\|v\|^{p}- \la
\int_{\Om} |v|^{p}
dx\right)^{\frac{\ba}{\ba-p}}}{\left(\overline{b}K\right)^{\frac{p}{\ba-p}}}
\end{align*}
and hence the result follows.

\noi $(ii)$ Let $\{u_k\}\subseteq\mc N_{\la}^{-}$ be a minimizing
sequence for $J_{\la}$ i.e. $\ds\lim_{k\ra\infty}
J_{\la}(u_k)=\ds\inf_{u\in \mc N_{\la}^{-}}J_{\la}(u)>0$. As
\[J_{\la}(u_k)=  \left(\frac{1}{p} -\frac{1}{\ba}\right)\left[\|u_k\|^{p}- \la
\int_{\Om} |u_k|^{p} dx\right] \geq \left(\frac{1}{p}
-\frac{1}{\ba}\right)\left(1 -\frac{\la}{\la_1}\right)\|u_k\|^p,\]
so $\{u_k\}$ is a bounded sequence in $X_0$. Thus we may assume that up to a
subsequence still denoted by $\{u_k\}$ such that $u_k\rightharpoonup
u_0$ weakly in $X_0$ and $u_k\ra u_0$ in $L^{p}(\Om)$ and
$L^{\ba}(\Om)$. Now $0< \ds\lim_{k\ra\infty}J_{\la}(u_k)=
\left(\frac{1}{p} -\frac{1}{\ba}\right)\lim_{k\ra\infty}\int_{\Om}
b(x)|u_k|^{\ba} dx=\left(\frac{1}{p} -\frac{1}{\ba}\right)\int_{\Om}
b(x)|u_0|^{\ba} dx $. It follows that $u_0\not\equiv 0$ a.e. in $\mb
R^n$. Also $\|u_0\|^{p}- \la \int_{\Om} |u_0|^{p} dx \geq
(\la_1-\la)\int_{\Om}|u_0|^p dx>0$. Thus $\frac{u_0}{\|u_0\|}\in
B^{+}\cap E^{+}_{\la}$. We now show that $u_k\ra u_0$ strongly in
$X_0$. Suppose $u_k\not\ra u_0$ in $X_0$. Then
\begin{align*}
\|u_0\|^p - \la \int_{\Om} |u_0|^{p} dx -\int_{\Om}
b(x)|u_0|^{\ba}dx <\liminf_{k\ra\infty} \|u_k\|^p
- \la \int_{\Om} |u_k|^{p} dx - \int_{\Om} b(x)|u_k|^{\ba} dx =0.
\end{align*}
Also by the fibering map analysis we have that $\phi_{u_0}$ has a
unique maximum at $t(u_0)$ such that $t(u_0)u_0\in \mc N_{\la}^{-}$
and $t(u_0)<1$. As $u_k\in \mc N_{\la}^{-}$, the map $\phi_u$
attains its maximum at $t=1$. Hence
\[J_{\la}(t(u_0) u_{0}) <  \liminf_{k\ra \infty} J_{\la}(t(u_0)u_{k})\leq \lim_{k\ra \infty}J_{\la}(u_k)= \inf_{u\in\mc N_{\la}^{-}}J_{\la}(u).\]
which is a contradiction. Hence we must have $u_k\ra u_{0}$ in
$X_0$. Thus $u_{0}\in \mc N_{\la}^{-}$ and
$J_{\la}(u_0)=\ds\lim_{k\ra\infty} J_{\la}(u_k)=\ds\inf_{u\in\mc
N_{\la}^{-}}J_{\la}(u)$. Since $\int_{\Om} b|u_0|^{\ba} dx>0$,
$u_0\not\in \mc N_{\la}^{0}$. So $u_0$ is a critical point of
$J_{\la}$.\QED
\end{proof}
%%%%%%%%%%%%%%%%%%%%%%%%%%%%%%%%%%%%%%%%%%%%%%%%%%%%%%%%%%%%%%%%%%%%%%%%%%%%%%%%%%%
%We conclude this section by considering the case $\int_{\Om}b(x)\phi_{1}^{\ba} dx >0$.
%%%%%%%%%%%%%%%%%%%%%%%%%%%%%%%%%%%%%%%%%%%%%%%%%%%%%%%%%%%%%%%%%%%%%%%%%%%%%%%%%5
\begin{Theorem}\label{t41}
Suppose $\int_{\Om}b(x)\phi_{1}^{\ba} dx >0$. Then
\begin{enumerate}
\item[$(i)$] $\lim_{\la\ra\la_{1}^{-}} \inf_{u\in \mc N_{\la}^{-}}
J_{\la}(u)= 0$.
\item[$(ii)$] If $\la_{k}\ra \la_{1}^{-}$ and $u_k$ is a minimizer
of $J_{\la_{k}}$ on $\mc N_{\la}^{-}$, then $\lim_{k\ra\infty}
u_k=0$
\end{enumerate}
\end{Theorem}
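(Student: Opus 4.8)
The plan is to treat the two parts in turn, using $\phi_1$ as the privileged test element for the upper bound in $(i)$ and a normalisation-plus-concentration argument for the convergence in $(ii)$.

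For part $(i)$ I would first check that $\phi_1\in E^{+}_{\la}\cap B^{+}$ for every $\la<\la_1$: indeed $\|\phi_1\|^{p}-\la\int_{\Om}|\phi_1|^{p}dx=(\la_1-\la)\int_{\Om}|\phi_1|^{p}dx>0$ and, by hypothesis, $\int_{\Om}b\phi_1^{\ba}dx>0$. By the fibering analysis of Case~3 in the $p$-superlinear regime the map $\phi_{\phi_1}$ has a unique critical point $t(\phi_1)$, a global maximum, with $t(\phi_1)\phi_1\in\mc N_{\la}^{-}$. Inserting $t(\phi_1)=\big[(\|\phi_1\|^{p}-\la\int_{\Om}|\phi_1|^{p}dx)/\int_{\Om}b\phi_1^{\ba}dx\big]^{1/(\ba-p)}$ into $J_{\la}(t(\phi_1)\phi_1)=(\tfrac1p-\tfrac1\ba)|t(\phi_1)|^{p}(\|\phi_1\|^{p}-\la\int_{\Om}|\phi_1|^{p}dx)$ gives the closed form
\[
J_{\la}(t(\phi_1)\phi_1)=\Big(\tfrac1p-\tfrac1\ba\Big)\frac{\big((\la_1-\la)\int_{\Om}|\phi_1|^{p}dx\big)^{\ba/(\ba-p)}}{\big(\int_{\Om}b\phi_1^{\ba}dx\big)^{p/(\ba-p)}},
\]
which tends to $0$ as $\la\ra\la_1^{-}$ since $\ba/(\ba-p)>0$. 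As $t(\phi_1)\phi_1\in\mc N_{\la}^{-}$ this bounds $\inf_{\mc N_{\la}^{-}}J_{\la}$ from above, and combined with $J_{\la}\ge0$ on $\mc N_{\la}^{-}$ (Lemma~\ref{t31}) a squeeze yields $\lim_{\la\ra\la_1^{-}}\inf_{u\in\mc N_{\la}^{-}}J_{\la}(u)=0$.

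For part $(ii)$ I would use $(i)$ as the starting point: since $u_k$ minimises $J_{\la_k}$ on $\mc N_{\la_k}^{-}$, one has $J_{\la_k}(u_k)=\inf_{\mc N_{\la_k}^{-}}J_{\la_k}\ra0$. On $\mc N_{\la_k}$, $J_{\la_k}(u_k)=(\tfrac1p-\tfrac1\ba)(\|u_k\|^{p}-\la_k\int_{\Om}|u_k|^{p}dx)=(\tfrac1p-\tfrac1\ba)\int_{\Om}b|u_k|^{\ba}dx$, so both $\|u_k\|^{p}-\la_k\int_{\Om}|u_k|^{p}dx$ and $\int_{\Om}b|u_k|^{\ba}dx$ vanish in the limit. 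Setting $v_k=u_k/\|u_k\|$ and dividing the Nehari identity by $\|u_k\|^{p}$ produces the key relation $a_k:=1-\la_k\int_{\Om}|v_k|^{p}dx=\|u_k\|^{\ba-p}\int_{\Om}b|v_k|^{\ba}dx$. Passing to a subsequence with $v_k\rightharpoonup v_0$ in $X_0$ and $v_k\ra v_0$ in $L^{p}$ and $L^{\ba}$, I would note $a_k\ra a_0:=1-\la_1\int_{\Om}|v_0|^{p}dx\ge0$ and distinguish two cases. If $a_0>0$, then $a_k$ is bounded away from $0$, and $J_{\la_k}(u_k)=(\tfrac1p-\tfrac1\ba)\|u_k\|^{p}a_k\ra0$ forces $\|u_k\|\ra0$ at once.

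The degenerate case $a_0=0$ is the main obstacle. Here, using the eigenvalue inequality \eqref{a1} for $v_0$ (which reads $\|v_0\|^{p}\ge\la_1\int_{\Om}|v_0|^{p}dx$) together with weak lower semicontinuity, the chain $0=a_0=1-\la_1\int_{\Om}|v_0|^{p}dx\ge1-\|v_0\|^{p}\ge0$ forces $\|v_0\|=1=\lim\|v_k\|$; by the uniform convexity of $X_0$ (valid since $p\ge2$) this upgrades the weak convergence to $v_k\ra v_0$ strongly, and equality in \eqref{a1} makes $v_0$ a minimiser for $\la_1$. Simplicity of $\la_1$ (see \cite{gf}) then gives $v_0=c\phi_1$, so $\int_{\Om}b|v_0|^{\ba}dx=|c|^{\ba}\int_{\Om}b\phi_1^{\ba}dx>0$ by hypothesis, whence $\int_{\Om}b|v_k|^{\ba}dx\ra\int_{\Om}b|v_0|^{\ba}dx>0$. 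Feeding this back into the key relation gives $\|u_k\|^{\ba-p}=a_k/\int_{\Om}b|v_k|^{\ba}dx\ra0$, so again $\|u_k\|\ra0$. Since every subsequence thus has a further subsequence along which $u_k\ra0$ in $X_0$, the whole sequence converges to $0$, proving $(ii)$. The crux is precisely this degenerate regime: one must identify $\phi_1$ as the concentration profile and invoke the sign condition $\int_{\Om}b\phi_1^{\ba}dx>0$ to prevent $\int_{\Om}b|v_k|^{\ba}dx$ from collapsing to $0$.
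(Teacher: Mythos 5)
Your part $(i)$ coincides with the paper's own proof: both test $\mc N_{\la}^{-}$ with the element $t(\phi_1)\phi_1$, compute the same closed form $\left(\frac1p-\frac1\ba\right)(\la_1-\la)^{\ba/(\ba-p)}\left(\int_{\Om}|\phi_1|^{p}dx\right)^{\ba/(\ba-p)}\big/\left(\int_{\Om}b\phi_1^{\ba}dx\right)^{p/(\ba-p)}$, and squeeze against the positive lower bound from Lemma \ref{t31}. Part $(ii)$ is correct but organized genuinely differently. The paper first proves that $\{u_k\}$ is bounded by contradiction: assuming $\|u_k\|\ra\infty$, it shows the normalized sequence $v_k=u_k/\|u_k\|$ must converge strongly (non-strong convergence would violate the eigenvalue inequality \eqref{a1}), so that the limit $v_0$ satisfies $\|v_0\|^p-\la_1\int_{\Om}|v_0|^{p}dx=0$ and $\int_{\Om}b|v_0|^{\ba}dx=0$, whence $v_0=k\phi_1$ with $k=0$, contradicting $\|v_0\|=1$; it then disposes of the bounded case with the remark that ``the same argument'' gives $u_k\ra u_0=0$. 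You bypass the boundedness step entirely: the scaling identity $a_k=\|u_k\|^{\ba-p}\int_{\Om}b|v_k|^{\ba}dx$ lets you dichotomize on $a_0=\lim a_k\geq 0$. When $a_0>0$, the conclusion $\|u_k\|\ra 0$ is immediate from $J_{\la_k}(u_k)=\left(\frac1p-\frac1\ba\right)\|u_k\|^{p}a_k\ra 0$; when $a_0=0$, you identify $v_0=c\phi_1$ with $c\neq 0$ via equality in \eqref{a1}, weak lower semicontinuity and simplicity of $\la_1$, and then the hypothesis $\int_{\Om}b\phi_1^{\ba}dx>0$ keeps $\int_{\Om}b|v_k|^{\ba}dx$ away from zero, forcing $\|u_k\|^{\ba-p}=a_k\big/\int_{\Om}b|v_k|^{\ba}dx\ra 0$; a subsequence argument then upgrades this to the full sequence. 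The ingredients (eigenvalue inequality, Radon--Riesz property of the uniformly convex $X_0$, compact embedding into $L^{\ba}$, simplicity of $\la_1$, the sign condition) are identical in both proofs; what your version buys is that the sign condition is used constructively rather than to manufacture a contradiction, and the paper's terse ``same argument'' step -- which in fact requires rerunning the strong-convergence reasoning on the unnormalized sequence -- is replaced by a single self-contained dichotomy that covers bounded and unbounded behavior of $\{u_k\}$ at once.
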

%%%%%%%%%%%%%%%%%%%%%%%%%%%%%%%%%%%%%%%%%%%%%%%%%%%%%%%%%%%%%%%%%%%%%%%%%%%%%%%%%%%%%%%%%%%%%%%%%%%%%%%%%%%%%%%%%

\begin{proof}
\begin{enumerate}
\item[$(i)$] Without loss of generality, we may assume that $\|\phi_1\|=1$. Since $\int_{\Om}
b(x)\phi_{1}^{\ba}dx >0$ and
\[\int_{Q} |\phi_{1}(x)-\phi_{1}(y)|^p K(x-y)dxdy - \la\int_{\Om}|\phi_1|^{p}dx = (\la_1- \la)\int_{\Om}|\phi_1|^{p}dx >0,\]
we have $\phi_1\in E^{+}_{\la}\cap B^{+}$ for all $\la<\la_1$ and
hence $t(\phi_1)\phi_1\in \mc N^{-}_{\la}$, where
$t(\phi_1)=%\left[\frac{\|\phi_1\|^{p}-\la\int_{\Om}|\phi_1|^p
%dx}{\int_{\Om}b(x)|\phi_1|^{\ba}dx}\right]^{\frac{1}{\ba-p}}=
\left[\frac{(\la_1-\la)\int_{\Om}|\phi_1|^p
dx}{\int_{\Om}b(x)|\phi_1|^{\ba}dx}\right]^{\frac{1}{\ba-p}}$. Thus
\begin{align*}
J_{\la}(t(\phi_1)\phi_1)=& \left(\frac{1}{p}-\frac{1}{\ba}\right)
|t(\phi_1)|^{\ba} \int_{\Om}b(x) |\phi_1|^{\ba} dx\\
=&\left(\frac{1}{p}-\frac{1}{\ba}\right)
(\la_1-\la)^{\frac{\ba}{\ba-p}}\frac{(\int_{\Om}|\phi_1|^{p}dx)^{\frac{\ba}{\ba-p}}}{\left(
\int_{\Om}b(x)|\phi_1|^{\ba} dx\right)^{\frac{p}{\ba-p}}}.
\end{align*}
Then $0<\ds\inf_{u\in\mc N_{\la}^{-}} J_{\la}(u)\leq
J_{\la}(t(\phi_1)\phi_1)\ra 0$ as $\la\ra \la_{1}^{-}$. Hence $\ds\lim_{\la\ra\la_{1}^{-}} \inf_{u\in \mc N_{\la}^{-}}
J_{\la}(u)= 0$.

\item[$(ii)$] We first show that $\{u_k\}$ is bounded. Suppose not, then we
may assume that $\|u_k\|\ra\infty$ as $k\ra \infty$. Let
$v_k=\frac{u_k}{\|u_k\|}$. Then we may assume that
$v_k\rightharpoonup v_0$ weakly in $X_0$ and $v_k\ra v_0$
strongly in $L^{p}(\Om)$ for every $1\leq p<p^*$. Since $u_k\in \mc
N_{\la}$, we have
\begin{align*}
J_{\la_k}(u_k)= \left(\frac{1}{p}-\frac{1}{\ba}\right)\left[ \|u_k\|^p
- \la_k \int_{\Om} |u_k|^{p} dx\right] =
\left(\frac{1}{p}-\frac{1}{\ba}\right)\int_{\Om}
b(x)|u_k|^{\ba}dx\ra 0\;\mbox{as}\;k\ra\infty,
\end{align*}
by $(i)$ and so we get
\[\lim_{k\ra\infty}\left(\|v_k\|^p  - \la_k \int_{\Om} |v_k|^{p}
dx\right) = 0\;\mbox{and}\; \lim_{k\ra\infty} \int_{\Om}
b(x)|v_k|^{\ba} dx=0.\] \noi Suppose $v_k\not\ra v_0$ strongly in
$X_0$. Then
\[\|v_0\|^p -\la_1\int_{\Om}|v_0|^{p}dx < \lim_{k\ra\infty} \int_{Q} |v_k(x)-v_k(y)|^p K(x-y) dxdy-\la_k\int_{\Om}|v_k|^{p}dx =0,\]
which is impossible. Hence $v_k\ra v_0$ in $X_0$. Thus we must have
\[\|v_0\|^p -\la_1\int_{\Om}|v_0|^{p}dx= \lim_{k\ra\infty}  \|v_k\|^p - \la_k\int_{\Om}|v_k|^{p}dx = 0,\]
and so $v_0=k\phi_1$ for some $k$. Since $\int_{\Om}
b(x)|v_0|^{\ba}dx=0$ implies that $k=0$. Thus $v_0=0$, which is
again impossible as $\|v_0\|=1$. Hence $\{u_k\}$ is bounded. So we
assume that $u_k\rightharpoonup u_0$ weakly in $X_0$. Thus by using
the same argument, we can get that $u_k\ra u_0$ and $u_0=0$. Hence
the proof is complete. \QED
\end{enumerate}
\end{proof}
\noi {\bf Proof of Theorem 1.4:} Lemma \ref{t31} and Theorem \ref{t41} complete the proof of Theorem \ref{sp1}.
%%%%%%%%%%%%%%%%%%%%%%%%%%%%%%%%%%%%%%%%%%%%%%%%%%%%%%%%%%%%%%%%%%%%%%%%%%%%%%%%%%%%%%%%%%%%%%55
%%%%%%%%%%%%%%%%%%%%%%%%%%%%%%%%%%%%%%%%%%%%%%%%%%%%%%%%%%%%%%%%%%%%%%%%%%%%%%%%%%%%%%%%%%%%%%%%%%%
\subsection{Case when $\la>\la_1$}
If $\la>\la_1$, then
$$\int_{Q} |\phi_{1}(x)-\phi_{1}(y)|^p K(x-y) dxdy - \la
\int_{\Om} |\phi_{1}|^{p} dx = (\la_1 -\la) \int_{\Om}
|\phi_{1}|^{p} dx <0.$$ and so $\phi_1\in E^{-}_{\la}$. Hence if $\int_{\Om} b(x)|\phi_1|^{\ba}<0$ then $\phi_1\in E_{\la}^{-}\cap B^{-}$ and so $\mc N_{\la}^{+}$ is non-empty. For $\la=\la_1$, we have $E_{\la}^{-}=\emptyset$ and $E_{\la}^{0}=\{\phi_1\}$.\\
When $\la>\la_1$, and if $\phi_1\in B^{-}$, then it follows that $\overline{E^{-}_{\la}}\cap\overline{B^{+}}$ is empty. We show that this is an important condition for establishing the existence of minimizers.
%%%%%%%%%%%%%%%%%%%%%%%%%%%%%%%%%%%%%%%%%%%%%%%%%%%%%%%%%%%%%%%%%%%%%%%%%%%%%%%%%%%%%

%%%%%%%%%%%%%%%%%%%%%%%%%%%%%%%%%%%%%%%%%%%%%%%%%%%%%%%%%%%%%%%%%%%%%%%%%%%%%%%%%%%%%%%%%%%%%%%
\begin{Lemma}\label{le3}
Suppose $\int_{\Om}b(x)\phi_{1}^{\ba} dx <0$ then there exists
$\de>0$ such that $u\in \overline{E_{\la}^{-}}\cap
\overline{B^{+}}=\emptyset$ whenever $\la_1< \la \leq \la_1+\de$.
\end{Lemma}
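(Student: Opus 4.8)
The plan is to argue exactly as in Lemma \ref{les3}, but to carry the estimate over to the \emph{closures}. First I would record a reduction: since the functionals $u\mapsto \|u\|^{p}-\la\int_{\Om}|u|^{p}dx$ and $u\mapsto \int_{\Om}b(x)|u|^{\ba}dx$ are continuous on $X_0$ (strong convergence in $X_0$ forces strong convergence in $L^{p}(\Om)$ and $L^{\ba}(\Om)$ by the compact embedding Lemma of Section 2), every element of $\overline{E_{\la}^{-}}$ satisfies $\|u\|=1$ and $\|u\|^{p}-\la\int_{\Om}|u|^{p}dx\leq 0$, while every element of $\overline{B^{+}}$ satisfies $\|u\|=1$ and $\int_{\Om}b(x)|u|^{\ba}dx\geq 0$. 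Hence it suffices to show that, for $\la$ close enough to $\la_1$ from the right, no unit vector satisfies both inequalities simultaneously.

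I would prove this by contradiction. If no such $\de$ existed, there would be sequences $\la_k\ra\la_{1}^{+}$ and $u_k$ with $\|u_k\|=1$, $\|u_k\|^{p}-\la_k\int_{\Om}|u_k|^{p}dx\leq 0$ and $\int_{\Om}b(x)|u_k|^{\ba}dx\geq 0$. Since $\{u_k\}$ is bounded in $X_0$, up to a subsequence $u_k\rightharpoonup u_0$ weakly in $X_0$ and $u_k\ra u_0$ strongly in $L^{p}(\Om)$ and $L^{\ba}(\Om)$.

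The decisive step is to upgrade this to strong convergence in $X_0$, precisely as in Lemma \ref{les3}. If $u_k\not\ra u_0$ strongly, then $\|u_0\|^{p}<\liminf_k\|u_k\|^{p}$; combining this with $\|u_k\|^{p}\leq\la_k\int_{\Om}|u_k|^{p}dx$ and passing to the limit (using $\la_k\ra\la_1$ and $\int_{\Om}|u_k|^{p}dx\ra\int_{\Om}|u_0|^{p}dx$) yields $\|u_0\|^{p}<\la_1\int_{\Om}|u_0|^{p}dx$, which contradicts the eigenvalue inequality \eqref{a1}. Therefore $u_k\ra u_0$ in $X_0$, so $\|u_0\|=1$, and letting $k\ra\infty$ in the two inequalities gives $\|u_0\|^{p}-\la_1\int_{\Om}|u_0|^{p}dx\leq 0$ together with $\int_{\Om}b(x)|u_0|^{\ba}dx\geq 0$. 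By \eqref{a1} the first relation must be an equality, so $u_0$ realises the infimum defining $\la_1$; since $\la_1$ is simple (\cite{gf}), $u_0=c\,\phi_1$ for some $c\neq 0$, whence $\int_{\Om}b(x)|u_0|^{\ba}dx=|c|^{\ba}\int_{\Om}b(x)\phi_{1}^{\ba}dx<0$ by hypothesis. This contradicts $\int_{\Om}b(x)|u_0|^{\ba}dx\geq 0$, and the contradiction proves $\overline{E_{\la}^{-}}\cap\overline{B^{+}}=\emptyset$ for $\la_1<\la\leq\la_1+\de$.

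The only genuinely delicate point is the strong-convergence step: it is what turns the weak limit into an actual eigenfunction and lets the sign hypothesis $\int_{\Om}b(x)\phi_{1}^{\ba}dx<0$ bite against the simplicity of $\la_1$. Everything else is bookkeeping with the definitions of $E_{\la}^{-}$ and $B^{+}$ and the continuity of the two functionals on the unit sphere, where I expect no surprises.
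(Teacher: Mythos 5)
Your proof is correct and follows essentially the same route the paper intends: the paper's own ``proof'' of Lemma \ref{le3} is just the remark that it can be proved as in Lemma \ref{les3}, and your argument is exactly that adaptation --- contradiction sequences, upgrade to strong convergence via weak lower semicontinuity against the eigenvalue inequality \eqref{a1}, then simplicity of $\la_1$ forcing $u_0=c\phi_1$ and a sign contradiction with $\int_{\Om}b(x)\phi_1^{\ba}\,dx<0$. Your explicit handling of the closures (replacing strict inequalities by non-strict ones and noting $c\neq 0$) is a welcome tightening of the details the paper leaves implicit.
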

 \begin{proof}
 This can be prove in a similar way as in Lemma \ref{les3}.\QED
%We will prove this by contradiction. Suppose there exist sequences
%$\{\la_k\}$ and $\{u_k\}$ such that $\|u_k\|=1$, $\la_k\ra
%\la_{1}^{+}$ and
%\[\int_{Q} |u_k(x)-u_k(y)|^p K(x-y) dxdy-\la_k \int_{\Om}|u_k|^{p} dx \leq 0 \; \mbox{and}\; \int_{\Om} b(x)|u_k|^{\ba}\geq 0.\]
%Since $\{u_k\}$ is bounded, we may assume that $u_k\rightharpoonup
%u_0$ weakly in $X_0$ and $u_k\ra u_0$ strongly in $L^{p}(\Om)$ for
%$1\leq p<\frac{np}{n-p\al}$. We show that $u_k\ra u_0$ strongly in
%$X_0$. Suppose this is not true then
%$\|u_0\|<\liminf_{k\ra\infty}\|u_k\|$ and
%\[\|u_0\|^p -\la_1 \int_{\Om}|u_0|^{p} dx< \liminf_{k\ra\infty}\left[\|u_k\|^p -\la_k \int_{\Om}|u_k|^{p} dx\right]\leq 0\]
%which is impossible. Hence $u_k\ra u_0$ strongly in $X_0$ and so
%$\|u_0\|=1$. Thus it follows that
%\[(i) \;\|u_0\|^p -\la_1 \int_{\Om}|u_0|^{p}
%dx\leq 0\; \;\;\; (ii)\; \int_{\Om}b(x)|u_{0}|^{\ba} dx\geq 0.\] But
%$(i)$ implies that $u_0=k \phi_1$ for some $k$ and then $(ii)$
%implies that $k=0$ which is a contradiction as $\|u_0\|=1$. \QED
\end{proof}
%%%%%%%%%%%%%%%%%%%%%%%%%%%%%%%%%%%%%%%%%%%%%%%%%%%%%%%%%%%%%%%%%%%%%%%%%%%%%%%%%%%%%%%%%%%%%%%%%%%
\begin{Theorem}\label{t1}
Suppose $\overline{E^{-}_{\la}}\cap\overline{B^{+}}=\emptyset$. Then
we have the following:
\begin{enumerate}
\item[1.]  ${\mc N_{\la}^{0}}= \{0\}$.
\item[2.] $0\not\in \overline{\mc N_{\la}^{-}}$ and $\mc N_{\la}^{-}$ is closed.
\item[3.] $\mc N_{\la}^{-}$ and ${\mc N_{\la}^{+}}$ are separated, i.e. $\overline{\mc N_{\la}^{-}}\cap \oline{\mc N_{\la}^{+}}=\emptyset$.
\item[4.]  $\mc N_{\la}^{+}$ is bounded.
\end{enumerate}
\end{Theorem}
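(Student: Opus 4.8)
The plan is to mirror the proof of Theorem \ref{tt1}, substituting the superlinear sign conventions of \eqref{n1} and recording one homogeneity identity. For $u\in\mc N_{\la}\setminus\{0\}$ write $w=u/\|u\|$; since $E_{\la}(u)=B(u)$ on $\mc N_{\la}$ and $\phi_{u}^{\prime\prime}(1)=(p-\ba)B(u)$ with $p-\ba\neq0$, membership of $u$ in $\mc N_{\la}^{-}$, $\mc N_{\la}^{+}$, $\mc N_{\la}^{0}$ is equivalent to $w$ lying in $E^{+}_{\la}\cap B^{+}$, $E^{-}_{\la}\cap B^{-}$, $E^{0}_{\la}\cap B^{0}$ respectively, because $E_{\la}$ and $B$ keep their sign after normalization. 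Throughout I read $\overline{E^{-}_{\la}}=\{u:\|u\|=1,\,E_{\la}(u)\le0\}$ and $\overline{B^{+}}=\{u:\|u\|=1,\,B(u)\ge0\}$, so that $E^{0}_{\la}\subseteq\overline{E^{-}_{\la}}$ and $B^{0}\subseteq\overline{B^{+}}$ hold automatically. With this reading Part 1 is immediate: a nonzero $u\in\mc N_{\la}^{0}$ yields $w\in E^{0}_{\la}\cap B^{0}\subseteq\overline{E^{-}_{\la}}\cap\overline{B^{+}}=\emptyset$, a contradiction, so $\mc N_{\la}^{0}=\{0\}$.

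For Part 2, to prove $0\not\in\overline{\mc N_{\la}^{-}}$ I would argue by contradiction: take $\{u_k\}\subseteq\mc N_{\la}^{-}$ with $u_k\ra0$, set $v_k=u_k/\|u_k\|$, and pass to $v_k\rightharpoonup v_0$ weakly in $X_0$ and strongly in $L^{p}(\Om)$ and $L^{\ba}(\Om)$. Dividing the Nehari identity for $u_k$ by $\|u_k\|^{p}$ gives $E_{\la}(v_k)=\|u_k\|^{\ba-p}B(v_k)$; since $\ba>p$ and $\|u_k\|\ra0$ while $B(v_k)$ stays bounded, $E_{\la}(v_k)\ra0$, with $B(v_k)>0$. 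If $v_k\ra v_0$ strongly then $\|v_0\|=1$, $E_{\la}(v_0)=0$ and $B(v_0)\ge0$, so $v_0\in\overline{E^{-}_{\la}}\cap\overline{B^{+}}$, impossible. If the convergence is only weak then $\|v_0\|<1$ strictly, while $E_{\la}(v_k)=1-\la\int_{\Om}|v_k|^{p}dx\ra0$ forces $\la\int_{\Om}|v_0|^{p}dx=1$, hence $v_0\neq0$ and $E_{\la}(v_0)=\|v_0\|^{p}-1<0$; then $v_0/\|v_0\|\in E^{-}_{\la}$ and, as $B(v_0)\ge0$, also in $\overline{B^{+}}$, again a contradiction. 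Closedness of $\mc N_{\la}^{-}$ then follows: a strong limit $u$ of points of $\mc N_{\la}^{-}$ is nonzero by what precedes, satisfies $E_{\la}(u)=B(u)\ge0$ by continuity, and cannot have $B(u)=0$ (else $u\in\mc N_{\la}^{0}\setminus\{0\}$, contradicting Part 1), so $u\in\mc N_{\la}^{-}$.

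Part 3 is then cheap: since $\mc N_{\la}^{-}$ is closed, $\overline{\mc N_{\la}^{-}}=\mc N_{\la}^{-}$, so any $u\in\overline{\mc N_{\la}^{-}}\cap\overline{\mc N_{\la}^{+}}$ would satisfy $B(u)>0$ (membership in $\mc N_{\la}^{-}$) yet, being a strong limit of points of $\mc N_{\la}^{+}$ where $B<0$, also $B(u)\le0$ by continuity of $u\mapsto\int_{\Om}b|u|^{\ba}dx$. For Part 4, boundedness of $\mc N_{\la}^{+}$, I would again assume $\{u_k\}\subseteq\mc N_{\la}^{+}$ with $\|u_k\|\ra\infty$ and normalize $v_k\rightharpoonup v_0$. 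Here $\ba>p$ and $\|u_k\|\ra\infty$ in $E_{\la}(v_k)=\|u_k\|^{\ba-p}B(v_k)$ force $B(v_k)\ra0$ (as $E_{\la}(v_k)$ is bounded), so $B(v_0)=0$, while $B(v_k)<0$ gives $E_{\la}(v_k)\le0$ and hence $1-\la\int_{\Om}|v_0|^{p}dx\le0$. The same strong/weak dichotomy produces $v_0/\|v_0\|\in\overline{E^{-}_{\la}}\cap\overline{B^{+}}$ (strong case: $E_{\la}(v_0)\le0$, $B(v_0)=0$; weak case: the strict norm drop gives $E_{\la}(v_0)<0$ with $v_0\neq0$), contradicting the hypothesis.

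The genuinely delicate point, and the only real topological input, is the level-set reading of $\overline{E^{-}_{\la}}$ and $\overline{B^{+}}$, equivalently the inclusions $E^{0}_{\la}\subseteq\overline{E^{-}_{\la}}$ and $B^{0}\subseteq\overline{B^{+}}$: every limit extracted above lands on the zero level sets $E^{0}_{\la}$ or $B^{0}$ rather than strictly inside $E^{-}_{\la}$ or $B^{+}$, so without these inclusions the contradictions evaporate. I would establish them first. The remaining recurring obstacle is the strong-versus-weak convergence dichotomy: in the non-strong case one must extract the \emph{strict} inequality $E_{\la}(v_0)<\lim_{k}E_{\la}(v_k)$ from weak lower semicontinuity of $\|\cdot\|$ (using uniform convexity of $X_0$ to rule out $\|v_0\|=1$) while the $L^{p}$ and $L^{\ba}$ terms pass to the limit, and then certify $v_0\neq0$ so that $v_0/\|v_0\|$ is a bona fide point of the unit sphere; everything else is bookkeeping that parallels Theorem \ref{tt1}.
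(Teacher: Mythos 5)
Your proposal is correct and follows essentially the same route as the paper's proof: normalize on the Nehari manifold, use the sign dictionary \eqref{n1}, and in each part drive the (strong or weak) limit of the normalized sequence onto $E^{0}_{\la}\cup E^{-}_{\la}$ and $B^{0}\cup B^{+}$ to contradict $\overline{E^{-}_{\la}}\cap\overline{B^{+}}=\emptyset$; your ``level-set reading'' of the closures is exactly the interpretation the paper uses implicitly (e.g.\ when it writes $E_{\la}^{0}\cap B^{0}\subseteq \overline{E_{\la}^{-}}\cap\overline{B^{+}}$ in part 1). The only cosmetic difference is in part 2, where the paper avoids the strong/weak dichotomy by noting directly that $\lim_k\bigl(\|v_k\|^p-\la\int_{\Om}|v_k|^p\,dx\bigr)=1-\la\int_{\Om}|v_0|^p\,dx=0$ forces $v_0\neq 0$, and then applies weak lower semicontinuity once.
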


\begin{proof}
\begin{enumerate}
\item[$1.$] Let $u_0\in \mc N_{\la}^{0}\setminus \{0\}$. Then
$\frac{u_0}{\|u_0\|}\in E_{\la}^{0}\cap B^{0}\subseteq
\overline{E_{\la}^{-}}\cap \overline{B^{+}}=\emptyset$, which is
impossible. Hence ${\mc N_{\la}^{0}}= \{0\}$.

\item[$2.$] Suppose by contradiction that $0\in\overline{\mc N_{\la}^{-}}$. Then there exists
a sequence $\{u_k\}\subseteq \mc N_{\la}^{-}$ such that $\lim_{k\ra\infty} u_k
=0$ in $X_0$. Since $u_k\in \mc N_{\la}$,
\[0<\|u_k\|^p - \la \int_{\Om} |u_k|^{p} dx = \int_{\Om} b(x)|u_k|^{\ba}dx \ra 0\;\mbox{as}\; k\ra\infty\] implies that
\[\lim_{k\ra\infty}\int_{\Om} b(x)|u_k|^{\ba}dx=0\;\mbox{and}\;\lim_{k\ra\infty}\left(\|u_k\|^p - \la \int_{\Om} |u_k|^{p} dx\right)=0.\]
Let $v_k=\frac{u_k}{\|u_k\|}$. Then up to a subsequence
$v_k\rightharpoonup v_0$ weakly in $X_0$ and $v_k\ra v_0$ strongly in $L^{p}(\Om)$.
Clearly
\begin{align*}
0<\|v_k\|^p - \la \int_{\Om} |v_k|^{p} dx &=
{\|u_k\|^{\ba-p}}\int_{\Om} b(x)|v_k|^{\ba}dx\ra
0\;\mbox{as}\;k\ra\infty.
\end{align*}
Thus we have
\[0= \lim_{k\ra\infty} \left(\|v_k\|^p - \la \int_{\Om} |v_k|^{p} dx \right)= 1-\la\int_{\Om}|v_0|^{p}dx\]
and so $v_0\ne 0$. Moreover
\[\|v_0\|^p - \la \int_{\Om} |v_0|^{p} dx \leq \lim_{k\ra\infty} \|v_k\|^p - \la \int_{\Om} |v_k|^{p} dx =0,\]
and so $\frac{v_0}{\|v_0\|}\in \overline{E_{\la}^{-}}$. Since
$\int_{\Om} b(x)|v_k|^{\ba} dx>0$, it follows that $\int_{\Om}
b(x)|v_0|^{\ba} dx\geq0$ and so $\frac{v_0}{\|v_0\|}\in
\overline{B^+}$, which is a contradiction. Thus we have $0\not\in
\mc N_{\la}^{-}$.

\noi We now show that $\mc N_{\la}^{-}$ is a closed set.
Clearly $\overline{\mc N_{\la}^{-}}\subseteq\mc N_{\la}^{-}\cup \{0\}$. But
$0\not\in \mc N_{\la}^{-}$ so it follows that $\overline{\mc
N_{\la}^{-}}= \mc N_{\la}^{-}$.

\item[$3.$]  Using $(i)$ and $(ii)$, we have $\overline{\mc N_{\la}^{-}}\cap\overline{\mc
N_{\la}^{+}}\subseteq \mc N_{\la}^{-}\cap(\mc N_{\la}^{+}\cup \mc
N_{\la}^{0})= (\mc N_{\la}^{-}\cap \mc N_{\la}^{+})\cup(\mc
N_{\la}^{-}\cap \{0\})= \emptyset$, and so $\mc N_{\la}^{-}$ and
$\mc N_{\la}^{+}$ are separated.

\item[$4.$] Suppose $\mc N_{\la}^{+}$ is not bounded. Then as in Theorem there exists a
sequence $\{u_k\}\subseteq \mc N_{\la}^{+}$ and $v_k=\frac{u_k}{\|u_k\|}$ satisfy
%\begin{align*}
$\|u_k\|\ra\infty$ as $k\ra \infty$ and $\|u_k\|^p - \la \int_{\Om} |u_k|^{p} dx
= \int_{\Om} b(x)|u_k|^{\ba}dx<0$
%\end{align*}
%Let $v_k=\frac{u_k}{\|u_k\|}$. Then we may assume that
%$v_k\rightharpoonup v_0$ weakly in $X_0$ and $v_k\ra v_0$
%strongly in $L^{p}(\Om)$ for every $1\leq p<p^*$. Thus
and
\[\|v_k\|^p  - \la \int_{\Om} |v_k|^{p} dx
= {\|u_k\|^{\ba-p}}\int_{\Om} b(x)|v_k|^{\ba}dx.\]
Since $\|v_k\|^p -\la \int_{\Om} |v_k|^{p} dx$ is bounded and $\|u_k\|
\ra\infty$ as $k\ra \infty$, we have $\int_{\Om} b(x)|v_0|^{\ba}dx=
\ds\lim_{k\ra\infty}\int_{\Om} b(x)|v_k|^{\ba}dx=0.$
\noi We now show that $v_k\ra v_0$ strongly in $X_0$. Suppose $v_k\not\ra v_0$ strongly in $X_0$. Then from \eqref{n1},
\begin{align}\label{n2}
\|v_0\|^p -\la\int_{\Om}|v_0|^{p}<\lim_{k\ra\infty} \int_{Q} |v_k(x)-v_k(y)|^p K(x-y) dxdy-\la\int_{\Om}|v_k|^{p} \leq0.
\end{align}
Thus $\frac{v_0}{\|v_0\|}\in \overline{E^{-}_{\la}}\cap
\overline{B^{+}}$, which is a contradiction. Hence $v_k\ra v_0$ in $X_0$.
Therefore $\|v_0\|=1$. From this and equation \eqref{n2} we obtain
%\[\|v_0\|^p -\la\int_{\Om}|v_0|^{p}dx= \lim_{k\ra\infty} \int_{Q} |v_k(x)-v_k(y)|^p K(x-y) dxdy-\la\int_{\Om}|v_0|^{p}dx \leq 0. \]
%Thus
$v_0\in \overline{E^{-}_{\la}}\cap\overline{B^{+}}$, which is
again a contradiction. Hence $\mc N^{+}_{\la}$ is bounded.\QED
\end{enumerate}
\end{proof}
%%%%%%%%%%%%%%%%%%%%%%%%%%%%%%%%%%%%%%%%%%%%%%%%%%%%%%%%%%%%%%%%%%%%%%%%%%%%%%%%%%%%%%%%%%%%%%%%%%%%%%%%%%%%%%%%%%
Next we show that $J_{\la}$ is bounded below on $\mc
N_{\la}^{+}$ and bounded away from zero on $\mc N_{\la}^{-}$.
Moreover for $\la<\la_0$, $J_{\la}$ achieves its minimizers on $\mc
N_{\la}^{+}$ and $\mc N_{\la}^{-}$ provided $\mc N_{\la}^{-}$ is
non-empty. We also note that $J_{\la}(u)$ changes sign in $\mc N_{\la}$. We
have $J_{\la}(u)>0$ on $\mc N_{\la}^{-}$ and $J_{\la}(u)<0$ on $\mc
N_{\la}^{+}$.

%%%%%%%%%%%%%%%%%%%%%%%%%%%%%%%%%%%%%%%%%%%%%%%%%%%%%%%%%%%%%%%%%%

%%%%%%%%%%%%%%%%%%%%%%%%%%%%%%%%%%%%%%%%%%%%%%%%%%%%%%%%%%%%%%%%%%%%%%%%%%%%%%%%%%%%%%%%%%%%%%%%%%%
%\begin{Lemma}\label{le6}
%Suppose $E_{\la}^{-}\cap B^{+}\ne\emptyset$. Then there exists
%$n>0$ such that for every $\e>0$, there exists $u_{\e}\in
%E_{\la}^{-}\cap B^{+}$ such that
%\[\int_{Q}
%|u_{\e}(x)- u_{\e}(y)|^p K(x-y) dxdy - \la \int_{\Om} |u_{\e}|^{p}
%dx<\e \;\mbox{and}\; \int_{\Om} b(x)|u_{\e}|^{\ba} dx >n. \]
%\end{Lemma}
%%%%%%%%%%%%%%%%%%%%%%%%%%%%%%%%%%%%%%%%%%%%%%%%%%%%%%%%%%%%%%%%%%%%%%%%%%%%%%%%%%%%%%%%%%%%%%%%%%%%%%%%%%%%%%%%%%

\begin{Theorem}\label{tt2}
Suppose $\oline{E^{-}_{\la}}\cap \oline{B^{+}}=\emptyset$, Then, we have the following \\
$(i)$ every minimizing sequence of $J_{\la}(u)$ on $\mc N_{\la}^{-}$ is bounded.\\
$(ii)$ $\ds\inf_{u\in \mc N_{\la}^{-}}J_{\la}(u)>0$.\\
$(iii)$ there exists a minimizer for $J_{\la}(u)$ on $\mc N_{\la}^{-}$.
\end{Theorem}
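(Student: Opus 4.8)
The plan is to work throughout with the fibering description of $\mc N_\la^{-}$. On $\mc N_\la$ one has $J_\la(u)=\left(\frac1p-\frac1\ba\right)B(u)=\left(\frac1p-\frac1\ba\right)E_\la(u)$ with $\frac1p-\frac1\ba>0$, and by \eqref{n1} $\mc N_\la^{-}=\{u\in\mc N_\la:B(u)>0\}$, so $J_\la>0$ on $\mc N_\la^{-}$ and $m:=\inf_{u\in\mc N_\la^{-}}J_\la(u)\ge0$. The recurring device is: given a sequence in $\mc N_\la^{-}$, normalise $v_k=u_k/\|u_k\|$, extract a weak limit $v_0$ (with strong $L^p(\Om)$ and $L^\ba(\Om)$ convergence by the compactness lemma), and use the standing hypothesis $\oline{E^{-}_\la}\cap\oline{B^{+}}=\emptyset$ to exclude the limiting configuration. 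For $(i)$, boundedness of $J_\la(u_k)$ forces $E_\la(u_k)$ and $B(u_k)$ bounded; if $\|u_k\|\ra\infty$ then $E_\la(v_k)=\|u_k\|^{-p}E_\la(u_k)\ra0$ and $B(v_k)=\|u_k\|^{-\ba}B(u_k)\ra0$, so $B(v_0)=0$. If $v_k\ra v_0$ strongly then $\|v_0\|=1$, $E_\la(v_0)=0$, giving $v_0\in E^{0}_\la\cap B^{0}\subseteq\oline{E^{-}_\la}\cap\oline{B^{+}}$; otherwise strict weak lower semicontinuity yields $E_\la(v_0)<0$, so $v_0\neq0$ and $v_0/\|v_0\|\in E^{-}_\la\cap\oline{B^{+}}$. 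Either alternative contradicts the hypothesis, so every minimizing sequence is bounded.

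For $(ii)$ I argue by contradiction: if $m=0$, a minimizing sequence satisfies $E_\la(u_k)\ra0$ and $B(u_k)\ra0$. Here Theorem \ref{t1} is essential, since $0\notin\oline{\mc N_\la^{-}}$ keeps $\|u_k\|$ bounded away from $0$; normalising and repeating the dichotomy above again produces a unit vector in $\oline{E^{-}_\la}\cap\oline{B^{+}}$, a contradiction. Hence $m>0$.

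The substance of the argument, and the main obstacle, is $(iii)$. I take a minimizing sequence, bounded by $(i)$, with $u_k\rightharpoonup u_0$ in $X_0$ and $u_k\ra u_0$ in $L^p(\Om)$ and $L^\ba(\Om)$. From $J_\la(u_k)\ra m>0$ and $J_\la(u_k)=\left(\frac1p-\frac1\ba\right)B(u_k)$ I read off $B(u_0)>0$, so $u_0\neq0$ and $u_0/\|u_0\|\in B^{+}\subseteq\oline{B^{+}}$; the hypothesis then forbids $u_0/\|u_0\|\in\oline{E^{-}_\la}\supseteq E^{0}_\la$, forcing $u_0/\|u_0\|\in E^{+}_\la$. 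By the Case~3 ($p$-superlinear) fibering analysis, $\phi_{u_0}$ has a unique maximiser $t(u_0)$ with $t(u_0)u_0\in\mc N_\la^{-}$. The delicate point is to promote weak to strong convergence: if $u_k\not\ra u_0$ in $X_0$ then $\|u_0\|^p<\liminf\|u_k\|^p$, whence $E_\la(u_0)<B(u_0)$ and so $t(u_0)<1$; since $u_k\in\mc N_\la^{-}$ makes $t\mapsto J_\la(tu_k)$ attain its maximum at $t=1$ while $t(u_0)u_k\rightharpoonup t(u_0)u_0$ non-strongly, weak lower semicontinuity gives
\[J_\la(t(u_0)u_0)<\liminf_{k\ra\infty}J_\la(t(u_0)u_k)\le\lim_{k\ra\infty}J_\la(u_k)=m,\]
contradicting $t(u_0)u_0\in\mc N_\la^{-}$. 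Therefore $u_k\ra u_0$ strongly, and since $\mc N_\la^{-}$ is closed by Theorem \ref{t1} we obtain $u_0\in\mc N_\la^{-}$ with $J_\la(u_0)=m$, the required minimizer. I expect the bookkeeping around $E_\la(u_0)>0$ (i.e. locating $u_0/\|u_0\|$ in $E^{+}_\la$ rather than $E^{0}_\la$) to be the step where the closure hypothesis is genuinely used and hence the crux.
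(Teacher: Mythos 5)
Your proposal is correct and takes essentially the same route as the paper's own proof: the same normalization dichotomy (strong versus non-strong convergence of $v_k=u_k/\|u_k\|$, excluding a unit vector in $\oline{E^{-}_{\la}}\cap\oline{B^{+}}$) for parts $(i)$ and $(ii)$ with the same appeal to Theorem \ref{t1}, and in part $(iii)$ the same fibering step placing $\frac{u_0}{\|u_0\|}\in E^{+}_{\la}\cap B^{+}$, followed by the identical weak-to-strong promotion via $t(u_0)<1$ and $J_{\la}(t(u_0)u_0)<\liminf_{k\ra\infty}J_{\la}(t(u_0)u_k)\leq\lim_{k\ra\infty}J_{\la}(u_k)$. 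No gaps to report.
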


%%%%%%%%%%%%%%%%%%%%%%%%%%%%%%%%%%%%%%%%%%%%%%%%%%%%%%%%%%%%%%%%%%%%%%%%%%%%%
\begin{proof}
$(i)$ Let $\{u_k\}\in \mc N_{\la}^{-}$ be a minimizing sequence for $J_{\la}$ on $\mc
N_{\la}^{-}$. Then
\[\|u_k\|^p - \la \int_{\Om}|u_k|^{p} dx=\int_{\Om} b(x)|u_k|^{\ba} dx\ra c\geq 0\]
We claim that $\{u_k\}$ is a bounded sequence. Suppose this is not true i.e $\|u_k\|\ra \infty$ as
$k\ra\infty$. Let $v_k=\frac{u_k}{\|u_k\|}$. Then  $v_k\rightharpoonup v_0$ weakly in $X_0$ and $v_k\ra v_0$ strongly in $L^{p}(\Om)$. Also
\[\lim_{k\ra\infty}\int_{Q} |v_k(x)- v_k(y)|^p K(x-y) dxdy -\la \int_{\Om} |v_k|^{p} dx =\lim_{k\ra\infty} \int_{\Om}b(x)|v_k|^{\ba} \|u_k\|^{\ba-p}dx\ra 0.\]
Since $\|u_k\|\ra+\infty$, it follows that $\int_{\Om}b(x)|v_k|^{\ba}dx\ra 0$ as $k\ra\infty$ and so
$\int_{\Om}b(x)|v_0|^{\ba}dx =0$.
%  \[\|v_0\|^p -\la \int_{\Om} |v_0|^{p} dx < \lim_{k\ra\infty} \|v_k\|^p -\la \int_{\Om} |v_k|^{p} dx=0= \int_{\Om}b(x)|v_0|^{\ba}dx.\]
 Next, suppose $v_k\not\ra v_0$ in $X_0$ and so
\[\|v_0\|^p -\la \int_{\Om}|v_0|^{p} dx<\lim_{k\ra\infty}\|v_k\|^p-\la \int_{\Om}|v_k|^{p} dx=0.\]
Thus $v_0\ne 0$ and $\frac{v_0}{\|v_0\|}\in \oline{E_{\la}^{-}}\cap \oline{B^{+}}$ which is impossible. Hence $v_k\ra v_0$ strongly in $X_0$.
It follows that $\|v_0\|=1$ and $\|v_0\|^p -\la \int_{\Om}|v_0|^{p} dx = \int_{\Om} b(x)|v_0|^{\ba}=0$.
Thus, $\frac{v_0}{\|v_0\|}\in E^{0}_{\la}\cap B^0$,
which is again a contradiction as $\oline{E^{-}_{\la}}\cap \oline{B^+}=\emptyset$. Hence $\{u_k\}$ is bounded.\\

$\noi (ii)$ Clearly $\ds\inf_{u\in \mc N_{\la}^{-}}J_{\la}(u)\geq0$. Suppose $\ds\inf_{u\in \mc N_{\la}^{-}}J_{\la}(u)=0$. Then let
$\{u_k\}$ be a minimizing sequence such that $J_{\la}(u_k)\ra 0$. By $(i)$, $\{u_k\}$ is bounded. Thus we may assume that
$u_k\rightharpoonup u_0$ weakly in $X_0$ and $u_k\ra u_0$ in $L^{p}(\Om)$.
Also $u_k\in \mc N_{\la}^{-}$ implies that $\int_{\Om} b(x)|u_0|^{\ba}dx\geq 0$. Now suppose
$u_k\not\ra u_0$ in $X_0$ then
\[\|u_0\|^p -\la \int_{\Om} |u_0|^{p} dx < \lim_{k\ra\infty}\int_{Q} |u_k(x)- u_k(y)|^p K(x-y) dxdy -\la \int_{\Om} |u_k|^{p} dx=0\]
which implies that $\frac{u_0}{\|u_0\|}\in \oline{E^{-}_{\la}}\cap \oline{B^+}$, which is impossible. Hence $u_k\ra u_0$. Also  $u_0\ne 0$,
since $0\not\in\oline{\mc N_{\la}^{-}}$. It then follows exactly as in the proof in $(i)$ that $\frac{u_0}{\|u_0\|}\in E^{0}_{\la}\cap B^{0}$ which is impossible as $\oline{E^{-}_{\la}}\cap \oline{B^+}=\emptyset$.\\

\noi $(iii)$ Let $\{u_k\}$ be a minimizing sequence. Then
\[J_{\la}(u_k)= \left(\frac{1}{p}-\frac{1}{\ba}\right)(\|u_k\|^{p} -\la \int_{\Om}|u_k|^{p} dx)= \left(\frac{1}{p}-\frac{1}{\ba}\right)\int_{\Om} b(x)|u_k|^{\ba}dx\ra \inf_{u\in \mc N_{\la}^{-}}J_{\la}(u)>0.\]
Also by $(i)$, $\{u_k\}$ is bounded. Therefore, we may assume that $u_k \rightharpoonup u_0$ weakly in $X_0$ and $u_k\ra u_0$ strongly in $L^{p}$.
Then $\int_{\Om} b(x)|u_0|^{\ba}dx>0$. Since $\oline{E^{-}_{\la}}\cap \oline{B^+}=\emptyset,$ it follows that
$B^{+}\subseteq E^{+}_{\la}$ and so $\|u_0\|^p-\la \int_{\Om}|u_0|^{p} dx>0.$
Hence $\frac{u_0}{\|u_0\|}\in E^{+}_{\la}\cap B^{+}$. Therefore there exists $t(u_0)$ such that
$t(u_0)u_0\in \mc N_{\la}^{-}$, where
\[t(u_0)=\left[\frac{\|u_0\|^{p} - \la \int_{\Om}
|u_0|^{p} dx}{\int_{\Om} b(x)|u_0|^{\ba} dx}\right]^{\frac{1}{\ba-p}}.\]
We now show that $u_k\ra u_0$ strongly in $X_0$. Suppose not, then $$\|u_0\|^{p} - \la \int_{\Om}
|u_0|^{p} dx< \lim_{k\ra\infty}\|u_k\|^{p} - \la \int_{\Om}
|u_k|^{p} dx =  \lim_{k\ra\infty}\int_{\Om} b(x)|u_k|^{\ba} dx= \int_{\Om} b(x)|u_0|^{\ba} dx$$
 and so $t(u_0)<1$. Since $t(u_0)u_k \rightharpoonup t(u_0)u_0$ weakly in $X_0$ but $t(u_0)u_k
\not\ra t(u_0)u_0$ strongly in $X_0$ and so
\[J_{\la}(t({u_0}) u_{0}) <   \lim_{k\ra \infty} J_{\la}(t(u_0)u_k).\]
Since the map $t\mapsto J_{\la}(tu_k)$ attains its maximum at
$t=1$, we have
\[J_{\la}(t(u_0)u_0)<\liminf_{k\ra \infty} J_{\la}(t(u_0)u_k) \leq \lim_{k\ra \infty} J_{\la}(u_k)= \inf_{u\in\mc N_{\la}^{-}}J_{\la}(u),\]
which is impossible. Thus $u_k\ra u_{0}$ strongly in $X_0$, and it
follows easily that $u_0$ is a minimizer for $J_{\la}$ on $\mc
N_{\la}^{-}$.\QED
\end{proof}

%%%%%%%%%%%%%%%%%%%%%%%%%%%%%%%%%%%%%%%%%%%%%%%%%%%%%%%%%%%%%%%%%%%%%%%%%%%%%%%%%%%%%%%%%%%%%%%%%%%%%%%%%%%%%%%%%%

\begin{Theorem}\label{ttt1}
Suppose $E_{\la}^{-}$ is non-empty but $\oline{E^{-}_{\la}}\cap \oline{B^{+}}=\emptyset$. Then there exist a minimizer of $J_{\la}$ on $\mc N_{\la}^{+}$.
\end{Theorem}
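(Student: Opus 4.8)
The plan is to carry out the direct method on $\mc N_\la^+$, in close analogy with the proof of Theorem \ref{t2}$(i)$ but adapted to the $p$-superlinear geometry, relying on the structural facts already established in Theorem \ref{t1}.

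First I would collect the a priori information. By Theorem \ref{t1}, $\mc N_\la^+$ is bounded and $\mc N_\la^0=\{0\}$. The hypothesis $\oline{E^{-}_{\la}}\cap\oline{B^{+}}=\emptyset$ forces $E_\la^-\subseteq B^-$: arguing as in Theorem \ref{t1}$(1)$ one has $E_\la^-\cap B^+\subseteq\oline{E^{-}_{\la}}\cap\oline{B^{+}}=\emptyset$ and likewise $E_\la^-\cap B^0\subseteq\oline{E^{-}_{\la}}\cap\oline{B^{+}}=\emptyset$, so each $u\in E_\la^-$ lies in $B^-$. As $E_\la^-\neq\emptyset$, the fiber analysis of Case $4$ (superlinear) gives, for such $u$, a point $t(u)u\in\mc N_\la^+$; hence $\mc N_\la^+\neq\emptyset$. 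Since on $\mc N_\la$ one has $J_\la(u)=\left(\frac1p-\frac1\ba\right)\int_\Om b(x)|u|^\ba\,dx$, which is negative on $\mc N_\la^+$ and bounded there (because $\mc N_\la^+$ is bounded and $X_0\hookrightarrow L^\ba(\Om)$), $J_\la$ is bounded below on $\mc N_\la^+$ and $m:=\ds\inf_{\mc N_\la^+}J_\la$ satisfies $-\infty<m<0$.

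Next, let $\{u_k\}\subseteq\mc N_\la^+$ be a minimizing sequence, $J_\la(u_k)\to m$. Boundedness of $\mc N_\la^+$ gives, up to a subsequence, $u_k\rightharpoonup u_0$ weakly in $X_0$ and $u_k\to u_0$ strongly in $L^p(\Om)$ and $L^\ba(\Om)$. From $J_\la(u_k)=\left(\frac1p-\frac1\ba\right)\int_\Om b|u_k|^\ba\,dx$ and strong $L^\ba$-convergence, $\int_\Om b|u_0|^\ba\,dx=\left(\frac1p-\frac1\ba\right)^{-1}m<0$, so $u_0\not\equiv0$ and $\frac{u_0}{\|u_0\|}\in B^-$. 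Using that $u_k\in\mc N_\la$ means $\|u_k\|^p-\la\int_\Om|u_k|^p\,dx=\int_\Om b|u_k|^\ba\,dx$, weak lower semicontinuity of the norm together with the strong $L^p$- and $L^\ba$-limits yields $\|u_0\|^p-\la\int_\Om|u_0|^p\,dx\le\int_\Om b|u_0|^\ba\,dx<0$, i.e. $\frac{u_0}{\|u_0\|}\in E_\la^-$. Hence $\frac{u_0}{\|u_0\|}\in E_\la^-\cap B^-$ and, by the fiber analysis, $\phi_{u_0}$ attains a unique global minimum at some $t(u_0)>0$ with $t(u_0)u_0\in\mc N_\la^+$.

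The decisive step is to upgrade to strong convergence $u_k\to u_0$ in $X_0$. Suppose not; then $\|u_0\|^p<\ds\liminf_{k\to\infty}\|u_k\|^p$, so $\|u_0\|^p-\la\int_\Om|u_0|^p\,dx<\int_\Om b|u_0|^\ba\,dx<0$, which (both sides being negative) forces $t(u_0)>1$. Evaluating the full functional, not its reduced Nehari form, at the weak limit, the strict drop in the leading norm term gives $J_\la(u_0)<\ds\lim_{k\to\infty}J_\la(u_k)=m$; combined with the minimality of the fiber at $t(u_0)$, namely $J_\la(t(u_0)u_0)\le\phi_{u_0}(1)=J_\la(u_0)$, we obtain $J_\la(t(u_0)u_0)<m$, contradicting $t(u_0)u_0\in\mc N_\la^+$ and the definition of $m$. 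Therefore $u_k\to u_0$ strongly, whence $u_0\in\mc N_\la$ with $\int_\Om b|u_0|^\ba\,dx<0$, and since $u_0\neq0$ while $\mc N_\la^0=\{0\}$ we conclude $u_0\in\mc N_\la^+$ with $J_\la(u_0)=m$, the desired minimizer. I expect the main difficulty to be exactly this last step: one must keep the reduced identity $J_\la=\left(\frac1p-\frac1\ba\right)\int_\Om b|u|^\ba$, valid only on $\mc N_\la$, separate from the full functional evaluated at the weak limit $u_0$, and verify that the fiber minimum at $t(u_0)$ strictly undercuts $m$.
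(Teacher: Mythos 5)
Your proposal is correct and follows essentially the same route as the paper: nonemptiness of $\mc N_{\la}^{+}$ and its boundedness via Theorem \ref{t1}, boundedness below of $J_{\la}$ through the Sobolev embedding, a minimizing sequence whose weak limit normalizes into $E_{\la}^{-}\cap B^{-}$, and a contradiction from $t(u_0)>1$ and the fiber-map global minimum if strong convergence fails. The only (harmless) difference is where the strict inequality is placed — you use $J_{\la}(t(u_0)u_0)\le J_{\la}(u_0)<m$, while the paper uses $J_{\la}(t(u_0)u_0)<J_{\la}(u_0)\le m$ — and your handling of the weak lower semicontinuity inequalities is in fact slightly more careful than the paper's.
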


\begin{proof}
Since $\oline{E^{-}_{\la}}\cap \oline{B^{+}}=\emptyset$, ${E^{-}_{\la}}\cap {B^{-}}\not=\emptyset$ and so $\mc N_{\la}^{+}$ must be nonempty.
Also by Theorem \ref{t1}, we have $\mc N_{\la}^{+}$ is bounded so there exist $M>0$ such that $\|u\|\leq M$ for all $u\in \mc N_{\la}^{+}$. Hence by using Sobolev inequality, we have
\begin{align*}
J_{\la}(u)= &\left(\frac{1}{p} -\frac{1}{\ba}\right)\int_{\Om}
b(x)|u|^{\ba} dx \geq \left(\frac{1}{p} -\frac{1}{\ba}\right)\underline{b}\int_{\Om}
|u|^{\ba} dx\\
&\geq \left(\frac{1}{p} -\frac{1}{\ba}\right)\underline{b} K \|u\|^{\ba}\geq \left(\frac{1}{p} -\frac{1}{\ba}\right)\underline{b} K M^{\ba}
\end{align*}
where $\underline{b}=\ds\inf_{x\in \Om} b(x)$. Thus $J_{\la}$ is bounded below on $\mc N_{\la}^{+}$ and so $\ds\inf_{u\in \mc N_{\la}^{+}}J_{\la}(u)$ exists. Moreover, $\ds\inf_{u\in \mc N_{\la}^{+}}J_{\la}(u)<0$.

\noi Suppose that $\{u_k\}$ is a minimizing sequence on $\mc N_{\la}^{+}$. Then
\[J_{\la}(u_k)= \left(\frac{1}{p} -\frac{1}{\ba}\right)\left[\|u_k\|^{p}- \la
\int_{\Om} |u_k|^{p} dx \right]= \left(\frac{1}{p} -\frac{1}{\ba}\right)\int_{\Om}
b|u_k|^{\ba} dx \ra \inf_{u\in \mc N_{\la}^{+}}J_{\la}(u) <0\]
as $k\ra\infty$. Since $\mc N_{\la}^{+}$ is bounded, we may assume that $u_k\rightharpoonup
u_0$ weakly in $X_0$ and $u_k\ra u_0$ in $L^{p}(\Om)$ and
$L^{\ba}(\Om)$. Then
\[\int_{\Om}
b|u_0|^{\ba} dx = \lim_{k\ra\infty}\int_{\Om}
b|u_k|^{\ba} dx <0\;\mbox{and}\; \|u_0\|^{p}- \la
\int_{\Om} |u_0|^{p} dx <\lim_{k\ra\infty}\left[\|u_k\|^{p}- \la
\int_{\Om} |u_k|^{p} \right]<0.\]
Hence $\frac{u_0}{\|u_0\|}\in E_{\la}^{-}\cap B^{-}$ and so there exist $t(u_0)$ such that $t(u_0)u_0\in \mc N_{\la}^{+}$.
Suppose $u_k\not\ra u_0$ then
\[\|u_0\|^{p}- \la
\int_{\Om} |u_0|^{p} dx <\lim_{k\ra\infty}\left[\|u_k\|^{p}- \la
\int_{\Om} |u_k|^{p}dx \right]= \lim_{k\ra\infty} \int_{\Om}
b|u_k|^{\ba} dx = \int_{\Om}
b|u_0|^{\ba} dx <0.\]
So \[t(u_0)=\left[\frac{\|u_0\|^{p} - \la \int_{\Om}
|u_0|^{p} dx}{\int_{\Om} b(x)|u_0|^{\ba} dx}\right]^{\frac{1}{\ba-p}}>1.\]
But this leads to a contradiction as
\[J_{\la}(t(u_0) u_{0}) < J_{\la}(u_0)\leq \lim_{k\ra \infty} J_{\la}(u_{k})=\inf_{u\in\mc N_{\la}^{+}}J_{\la}(u).\]
Thus we must have $u_k\ra u_{0}$ in
$X_0$, and so $\|u_0\|^p-\la\int_{\Om}|u_0|^p dx =\int_{\Om} b|u_0|^{\ba} dx<0$. Thus $u_{0}\in \mc N_{\la}^{+}$ and
$J_{\la}(u_0)=\ds\lim_{k\ra\infty} J_{\la}(u_k)=\ds\inf_{u\in\mc
N_{\la}^{+}}J_{\la}(u)$. Since $\int_{\Om} b|u_0|^{\ba} dx<0$,
$u_0\not\in \mc N_{\la}^{0}$ and so $u_0$ is a critical point of
$J_{\la}$.\QED
\end{proof}

%%%%%%%%%%%%%%%%%%%%%%%%%%%%%%%%%%%%%%%%%%%%%%%%%%%%%%%%%%%%%%%%%%%%%%%%%%%%%%%%%%%%%%%%%%%%%%%%%%%%%%%%%%%%%
\begin{Theorem}\label{tt3}
Suppose $\int_{\Om}b(x)\phi_{1}^{\ba} dx <0$. Then there exists $\de_1>0$ such that for $\la_1< \la \leq \la_1+\de_1$ there
exist minimizers $u_\la$ and $v_\la$ of $J_{\la}$ on $\mc
N_{\la}^{+}$ and $\mc N_{\la}^{-}$ respectively.
\end{Theorem}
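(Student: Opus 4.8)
The plan is to reduce Theorem \ref{tt3} to the machinery already assembled for the $p$-superlinear case, specifically Lemma \ref{le3}, Theorem \ref{tt2}, and Theorem \ref{ttt1}. The whole point of Lemma \ref{le3} is that when $\int_{\Om}b(x)\phi_{1}^{\ba}dx<0$ there is a threshold $\de_1>0$ making the geometric separation condition $\oline{E^{-}_{\la}}\cap\oline{B^{+}}=\emptyset$ valid on the whole interval $\la_1<\la\leq\la_1+\de_1$. Since that separation hypothesis is exactly what drives both existence theorems, the strategy is simply to verify the remaining hypotheses and then quote those theorems.

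First I would apply Lemma \ref{le3} with the assumption $\int_{\Om}b(x)\phi_{1}^{\ba}dx<0$ to produce $\de_1>0$ such that $\oline{E^{-}_{\la}}\cap\oline{B^{+}}=\emptyset$ for all $\la$ with $\la_1<\la\leq\la_1+\de_1$. Fix such a $\la$. For the minimizer on $\mc N_{\la}^{-}$, the separation condition is the sole hypothesis of Theorem \ref{tt2}, whose part $(iii)$ directly yields a minimizer $v_\la$ of $J_{\la}$ on $\mc N_{\la}^{-}$. For the minimizer on $\mc N_{\la}^{+}$, I would invoke Theorem \ref{ttt1}, which requires in addition that $E^{-}_{\la}$ be non-empty. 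This is where I would check the hypothesis explicitly: since $\la>\la_1$, the computation
\[
\int_{Q}|\phi_1(x)-\phi_1(y)|^p K(x-y)\,dxdy-\la\int_{\Om}|\phi_1|^{p}dx=(\la_1-\la)\int_{\Om}|\phi_1|^{p}dx<0
\]
shows $\phi_1\in E^{-}_{\la}$, so $E^{-}_{\la}\ne\emptyset$. Theorem \ref{ttt1} then furnishes a minimizer $u_\la$ of $J_{\la}$ on $\mc N_{\la}^{+}$.

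Finally, to conclude these minimizers are actual solutions, I would recall that by Lemma \ref{le10} a minimizer lying off $\mc N_{\la}^{0}$ is a critical point of $J_{\la}$, and that the non-negativity and regularity argument using the truncated functional $J_{\la}^{+}$ (carried out in the $p$-sublinear section) applies verbatim here. The only genuine content is the bookkeeping of which earlier result supplies which hypothesis; no new estimate is needed.

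The main obstacle, such as it is, lies entirely in Lemma \ref{le3}, whose proof is merely asserted to be "similar" to Lemma \ref{les3}. The delicate point there is the compactness step: one takes a would-be violating sequence $u_k$ with $\|u_k\|=1$, $\la_k\ra\la_1^{+}$, satisfying $\|u_k\|^p-\la_k\int_{\Om}|u_k|^p\,dx\le 0$ together with $\int_{\Om}b(x)|u_k|^{\ba}\,dx\ra 0$, and must upgrade weak convergence to \emph{strong} convergence in $X_0$ to force the limit to be $\pm\phi_1$ (using simplicity of $\la_1$), thereby contradicting $\int_{\Om}b(x)\phi_1^{\ba}\,dx<0$. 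The strong convergence is obtained by the same lower-semicontinuity trick used repeatedly above: if $u_k\not\ra u_0$ strongly then $\|u_0\|^p<\liminf\|u_k\|^p$ would give $\|u_0\|^p-\la_1\int_{\Om}|u_0|^p\,dx<0$, contradicting the definition of $\la_1$. Once Lemma \ref{le3} is in hand, Theorem \ref{tt3} is immediate.
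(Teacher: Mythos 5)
Your proposal is correct and takes essentially the same route as the paper: the paper's proof likewise observes that $\phi_1\in E^{-}_{\la}$ makes $E^{-}_{\la}$ non-empty for $\la>\la_1$, and then invokes Lemma \ref{le3} to verify that the hypotheses of Theorems \ref{tt2} and \ref{ttt1} hold with $\la_0=\la_1+\de_1$. Your closing sketch of Lemma \ref{le3}, via the lower-semicontinuity argument forcing strong convergence to $\phi_1$ as in Lemma \ref{les3}, is exactly the argument the paper leaves implicit.
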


\begin{proof}
Clearly $\phi_1\in E^{-}_{\la}$ and so $E^{-}_{\la}$ is non-empty
whenever $\la\geq \la_1$. By Lemma \ref{le3}, the hypotheses of
Theorem \ref{tt2} and Theorem \ref{ttt1} are satisfied with $\la_0= \la_1+\de_1$ and hence
the result follows.\QED
\end{proof}

\noi By considering $J_{\la}^{+}$ as in $p-$sublinear case, we get non-negative solutions in the similar way.
%%%%%%%%%%%%%%%%%%%%%%%%%%%%%%%%%%%%%%%%%%%%%%%%%%%%%%%%%%%%%%%%%%%%%%%%%%%%%%%%%%%%%%%%%%%%%%%%%%%%%%%%%%%%%%%%%%%%
%%%%%%%%%%%%%%%%%%%%%%%%%%%%%%%%%%%%%%%%%%%%%%%%%%%%%%%%%%%%%%%%%%%%%%%%%%%%%%%%%%%%%%%%%%%%%%%%%%%%%%%%%%%%%%%%%%%%%%%%%
\noi Finally, in this section we investigate the behavior of $\mc N_{\la}^{+}$ as $\la\ra \la_{1}^{+}$

 \begin{Theorem}\label{tt4}
Suppose $\int_{\Om} b(x)\phi_{1}^{\ba}dx <0$ and $u_k\in \mc N_{\la}^{+}$ for $\la=\la_k$ where $\la_{k}\ra \la_{1}^{+}$.
Then as $k\ra \infty$ we have $(i)$ $u_k\ra 0$ and $(ii)$ $\frac{u_k}{\|u_k\|}\ra \phi_1$ in $X_0$.
 \end{Theorem}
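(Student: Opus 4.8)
The plan is to establish first that the energy of $u_k$ vanishes, and then to read off both convergences from the variational characterization of $\la_1$. Throughout I normalize $\|\phi_1\|=1$, so $\|\phi_1\|^p=\la_1\int_{\Om}|\phi_1|^p\,dx$, and I set $w_k=\frac{u_k}{\|u_k\|}$, which lies in $E^{-}_{\la_k}\cap B^{-}$ since $u_k\in\mc N_{\la_k}^{+}$ (so $\int_{\Om}b|u_k|^{\ba}\,dx<0$ and $\|u_k\|^p-\la_k\int_{\Om}|u_k|^p\,dx<0$). The key first step is a uniform energy estimate. Writing $A_k=\|w_k\|^p-\la_k\int_{\Om}|w_k|^p\,dx<0$ and $C_k=\int_{\Om}b(x)|w_k|^{\ba}\,dx<0$, and using $u_k=t(w_k)w_k$ with $t(w_k)=(|A_k|/|C_k|)^{1/(\ba-p)}$, a direct computation gives
\[
|J_{\la_k}(u_k)|=\left(\frac1p-\frac1\ba\right)\frac{|A_k|^{\frac{\ba}{\ba-p}}}{|C_k|^{\frac{p}{\ba-p}}}.
\]
Since $\|w_k\|=1$ and $\la_1$ is the first eigenvalue, $\int_{\Om}|w_k|^p\,dx\le1/\la_1$, whence $|A_k|=\la_k\int_{\Om}|w_k|^p\,dx-1\le(\la_k-\la_1)/\la_1\to0$. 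On the other hand, the quantitative estimate of Lemma \ref{les3} (whose argument is insensitive to the sign condition $\ba\gtrless p$; compare Lemma \ref{le3}) yields $\de_2>0$ with $C_k\le-\de_2$ for $\la_1<\la_k\le\la_1+\de_1$, so $|C_k|\ge\de_2$. Hence $J_{\la_k}(u_k)\to0$, and since on $\mc N_{\la_k}$ one has $J_{\la_k}(u)=(\frac1p-\frac1\ba)\int_{\Om}b|u|^{\ba}\,dx=(\frac1p-\frac1\ba)(\|u\|^p-\la_k\int_{\Om}|u|^p\,dx)$, this gives simultaneously $\int_{\Om}b|u_k|^{\ba}\,dx\to0$ and $\|u_k\|^p-\la_k\int_{\Om}|u_k|^p\,dx\to0$.

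Next I would prove $\{u_k\}$ is bounded. If not, put $v_k=u_k/\|u_k\|$; dividing the two limits just obtained by $\|u_k\|^p$ and $\|u_k\|^{\ba}$ shows $1-\la_k\int_{\Om}|v_k|^p\,dx\to0$ and $\int_{\Om}b|v_k|^{\ba}\,dx\to0$. Passing to a weak limit $v_k\rightharpoonup v_0$ (strong in $L^p$ and $L^{\ba}$) gives $\int_{\Om}|v_0|^p\,dx=1/\la_1\neq0$ and $\int_{\Om}b|v_0|^{\ba}\,dx=0$. If $v_k\to v_0$ strongly then $\|v_0\|=1$ forces $\|v_0\|^p=\la_1\int_{\Om}|v_0|^p\,dx$, so by simplicity of $\la_1$ one has $v_0=\pm\phi_1$, contradicting $\int_{\Om}b|v_0|^{\ba}\,dx=0$; if $v_k\not\to v_0$ then $\|v_0\|^p<1=\la_1\int_{\Om}|v_0|^p\,dx$, contradicting $\|v_0\|^p\ge\la_1\int_{\Om}|v_0|^p\,dx$. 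Thus $\{u_k\}$ is bounded, and $u_k\rightharpoonup u_0$. Applying the same dichotomy to $u_k$ itself: non-strong convergence would force $\|u_0\|^p<\la_1\int_{\Om}|u_0|^p\,dx$ (using $\|u_k\|^p\to\la_1\int_{\Om}|u_0|^p\,dx$), which is impossible, so $u_k\to u_0$ strongly; then $\|u_0\|^p=\la_1\int_{\Om}|u_0|^p\,dx$ together with $\int_{\Om}b|u_0|^{\ba}\,dx=0$ forces $u_0=c\phi_1$ with $|c|^{\ba}\int_{\Om}b\phi_1^{\ba}\,dx=0$, i.e. $c=0$. Hence $u_k\to0$ strongly, and since every subsequence has such a subsequence the full sequence converges, proving $(i)$.

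For $(ii)$, dividing the Nehari identity $\|u_k\|^p-\la_k\int_{\Om}|u_k|^p\,dx=\int_{\Om}b|u_k|^{\ba}\,dx$ by $\|u_k\|^p$ gives $1-\la_k\int_{\Om}|w_k|^p\,dx=\|u_k\|^{\ba-p}\int_{\Om}b|w_k|^{\ba}\,dx$, whose right side tends to $0$ because $\ba>p$ and $\|u_k\|\to0$ by $(i)$, while $\int_{\Om}b|w_k|^{\ba}\,dx$ remains bounded. Hence for a weak limit $w_k\rightharpoonup w_0$ we obtain $\int_{\Om}|w_0|^p\,dx=1/\la_1$, so $w_0\neq0$; strong convergence must hold (otherwise $\|w_0\|^p<1=\la_1\int_{\Om}|w_0|^p\,dx$ contradicts the eigenvalue inequality), giving $\|w_0\|=1$ and $\|w_0\|^p=\la_1\int_{\Om}|w_0|^p\,dx$. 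By simplicity $w_0=\pm\phi_1$, and since the minimizers $u_k$ are non-negative, $w_0=\phi_1$; the subsequence-independence of the limit then yields $w_k\to\phi_1$ in $X_0$. The main obstacle is the very first step: unlike the analysis on $\mc N_{\la}^{-}$, the functional $J_{\la}$ is negative on $\mc N_{\la}^{+}$, so the energy cannot be squeezed to $0$ by a one-sided sign alone; one genuinely needs the uniform lower bound $|C_k|\ge\de_2$ on $E^{-}_{\la_k}$ to control the denominator above, after which each remaining step is the familiar weak/strong dichotomy, closed off by either simplicity of $\la_1$ or the Rayleigh inequality $\|\cdot\|^p\ge\la_1\int_{\Om}|\cdot|^p\,dx$.
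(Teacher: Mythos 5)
Your proof is correct, and it reaches the same endgame as the paper's---the weak/strong convergence dichotomy, the Rayleigh inequality $\|u\|^p\geq\la_1\int_{\Om}|u|^p\,dx$, simplicity of $\la_1$, and the sign condition $\int_{\Om}b(x)\phi_1^{\ba}\,dx<0$---but your opening is genuinely different. The paper obtains compactness at the outset by citing Theorem \ref{t1}(4): $\mc N_{\la}^{+}$ is bounded, so $u_k$ has a weakly convergent subsequence, and the dichotomy then gives strong convergence directly; no energy estimate is ever made. Strictly speaking, Theorem \ref{t1}(4) bounds $\mc N_{\la}^{+}$ for each \emph{fixed} $\la$, whereas the sequence $u_k$ lives in $\mc N_{\la_k}^{+}$ with varying $\la_k$, a uniformity point the paper glosses over. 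You instead extract the quantitative decay
$|J_{\la_k}(u_k)|\leq\left(\frac1p-\frac1\ba\right)\left(\frac{\la_k-\la_1}{\la_1}\right)^{\frac{\ba}{\ba-p}}\de_2^{-\frac{p}{\ba-p}}$
from the fibering formula together with the uniform bound $\int_{\Om}b(x)|u|^{\ba}\,dx\leq-\de_2$ on $E_{\la}^{-}$; your claim that the $\de_2$-version of Lemma \ref{les3} survives in the superlinear range is sound, since its proof uses only the compact embedding into $L^{\ba}$ for $\ba<p^*$ (the paper implicitly concedes this in Lemma \ref{le3}). This buys three things: the uniformity in $\la_k$ is handled explicitly, the argument applies to arbitrary elements of $\mc N_{\la_k}^{+}$ with an explicit rate, and in part $(ii)$ you correctly pin down the sign $w_0=\phi_1$ (rather than $-\phi_1$) via non-negativity of the minimizers, a point the paper passes over silently. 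Two small remarks: your boundedness step can be collapsed, since once $\|u_k\|\to\infty$ forces $\int_{\Om}b(x)|v_k|^{\ba}\,dx\to0$, this already contradicts $\int_{\Om}b(x)|v_k|^{\ba}\,dx\leq-\de_2$ outright, with no need for the weak-limit dichotomy; better still, combining Lemma \ref{les3} with the Nehari identity gives $\de_2\leq\|u_k\|^{p-\ba}|A_k|$, i.e. $\|u_k\|\leq\left(|A_k|/\de_2\right)^{\frac{1}{\ba-p}}\to0$, which proves part $(i)$ in one line from the two inequalities you already established, without any compactness argument at all.
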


 \begin{proof}
$(i)$ As $\mc N_{\la}^{+}$ is bounded so we may suppose that $u_k\rightharpoonup u_0$ weakly in $X_0$ and $u_k\ra u_0$ in $L^{p}(\Om)$.
Also \[\|u_k\|^p  -\la_k \int_{\Om} |u_k|^{p} dx=  \int_{\Om} b(x)|u_k|^{\ba}dx<0\;\mbox{for all}\;k.\]
 Now suppose that $u_k\not\ra u_0$ in $X_0$ then
\[\|u_0\|^p -\la_1 \int_{\Om} |u_0|^{p} dx < \liminf_{k\ra\infty}\left[\int_{Q} |u_k(x)- u_k(y)|^p K(x-y) dxdy -\la_k \int_{\Om} |u_k|^{p}dx\right] \leq 0\]
which is impossible. Hence $u_k\ra u_0$ strongly in $X_0$ and so
\[\|u_0\|^p  -\la_1 \int_{\Om} |u_0|^{p} dx=  \int_{\Om} b(x)|u_0|^{\ba}dx \leq 0.\]
Hence $\|u_0\|^p  -\la_1 \int_{\Om} |u_0|^{p} dx= 0$ so $u_0=k\phi_1$ for some $k$. But as $\int_{\Om} b(x)\phi_{1}^{\ba}dx< 0$ we obtain $k=0$.
Thus $u_k\ra 0$ in $X_0$.

\noi $(ii)$ Let $v_k=\frac{u_k}{\|u_k\|}$. Then we may assume that $v_k\rightharpoonup v_0$ weakly in $X_0$ and $v_k\ra v_0$ in $L^{p}(\Om)$.
Clearly \[\|v_k\|^p  -\la_k \int_{\Om} |v_k|^{p} dx=  \int_{\Om} b(x)|v_k|^{\ba}\|u_k\|^{\ba-p}dx.\]
Since $\|u_k\|\ra 0$ as $k\ra\infty$, we have $\lim_{k\ra\infty}\|v_k\|^p  -\la_1 \int_{\Om} |v_k|^{p} dx= 0.$
 We claim that $v_k\ra v_0$ strongly in $X_0$. Suppose not, then
\[\|v_0\|^p -\la_1 \int_{\Om} |v_0|^{p} dx < \lim_{k\ra\infty}\left[\int_{Q} |v_k(x)- v_k(y)|^p K(x-y) dxdy -\la_1 \int_{\Om} |v_k|^{p}dx\right] \leq 0\]
 which gives a contradiction. Hence $v_k\ra v_0$ strongly in $X_0$ and so $\|v_0\|=1$ and $\|v_0\|^p -\la_1 \int_{\Om} |v_0|^{p} dx=0$. Thus
$v_0=\phi_1$ and hence the result.\QED
\end{proof}

\noi {\bf Proof of Theorem \ref{sp2}:} It follows from Theorem \ref{tt3} and \ref{tt4}.\\

%%%%%%%%%%%%%%%%%%%%%%%%%%%%%%%%%%%%%%%%%%%%%%%%%%%%%%%%%%%%%%%%%%%%%%%%%%%%%%%%%%%%%%%%%%%%%%%%%%%%%%%%%%%%%%%%%%%%%%%%%%
%\section{Non-existence of solutions}

\noi At the end, we study non-existence results in $p-$superlinear case. For this, if $\int_{\Om} b(x)\phi_{1}^{\ba}dx >0$ then $\phi_1\in E_{\la}^{-}\cap B^{+}$ whenever $\la>\la_1$. One can easily show in a similar way as in Lemma \ref{le3}  that there exists $\de>0$ such that $\oline{E_{\la}^{-}}\subset B^{+}$, whenever $\la_1\leq \la<\la+\de$.
i.e $E_{\la}^{-}\cap B^{-}=\phi$ and so $\mc N_{\la}^{+}$ is empty. On the other hand $\mc N_{\la}^{-}$ is non-empty but we have

\begin{Lemma}\label{ttt2}
If $E_{\la}^{-}\cap B^{+}\not= \emptyset$, then $\inf_{u\in \mc N_{\la}^{-}}J_{\la}(u)=0$.
\end{Lemma}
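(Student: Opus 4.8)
The plan is to prove the two inequalities $\ds\inf_{u\in\mc N_{\la}^{-}}J_{\la}(u)\ge 0$ and $\ds\inf_{u\in\mc N_{\la}^{-}}J_{\la}(u)\le 0$ separately. For the lower bound, recall from \eqref{n1} that $u\in\mc N_{\la}^{-}$ forces $\int_{\Om}b(x)|u|^{\ba}dx>0$, while on all of $\mc N_{\la}$ one has $J_{\la}(u)=\left(\frac{1}{p}-\frac{1}{\ba}\right)\int_{\Om}b(x)|u|^{\ba}dx$. Since $p<\ba$ gives $\frac{1}{p}-\frac{1}{\ba}>0$, it follows that $J_{\la}(u)>0$ on $\mc N_{\la}^{-}$, hence $\ds\inf_{u\in\mc N_{\la}^{-}}J_{\la}(u)\ge 0$. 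It therefore suffices to produce a sequence in $\mc N_{\la}^{-}$ along which $J_{\la}\ra 0$.

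To build such a sequence I would reuse the path construction from the proof of Lemma \ref{les6}, but now exit on the $E_{\la}^{+}$ side of the boundary. Fix $u\in E_{\la}^{-}\cap B^{+}$, which exists by hypothesis, and choose $h\in X_0$ with arbitrarily small $L^{\infty}$-norm but large $\|h\|$ so that $u_{\e}:=\frac{u+\e h}{\|u+\e h\|}$ satisfies $B(u_{\e})=\int_{\Om}b(x)|u_{\e}|^{\ba}dx\ge m$ for some fixed $m>0$ and all $\e\in[0,1]$, while $\eta(\e):=E_{\la}(u_{\e})=\|u_{\e}\|^{p}-\la\int_{\Om}|u_{\e}|^{p}dx$ is continuous with $\eta(0)<0$ and $\eta(1)>0$. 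By the intermediate value theorem, for all large $k$ there is $\e_k$ with $\eta(\e_k)=\frac{1}{k}$; setting $u_k:=u_{\e_k}$ gives $\|u_k\|=1$, $u_k\in E_{\la}^{+}\cap B^{+}$, $E_{\la}(u_k)=\frac{1}{k}\ra 0^{+}$, and $B(u_k)\ge m>0$.

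Finally I would feed these $u_k$ into the fibering map analysis. Since $u_k\in E_{\la}^{+}\cap B^{+}$, Case $3$ of the $p$-superlinear analysis shows $\phi_{u_k}$ has a unique positive critical point $t(u_k)=\left[E_{\la}(u_k)/B(u_k)\right]^{\frac{1}{\ba-p}}$, a global maximum, with $t(u_k)u_k\in\mc N_{\la}^{-}$. A direct substitution into $J_{\la}$ yields
\[
J_{\la}(t(u_k)u_k)=\left(\frac{1}{p}-\frac{1}{\ba}\right)\frac{E_{\la}(u_k)^{\frac{\ba}{\ba-p}}}{B(u_k)^{\frac{p}{\ba-p}}}.
\]
Because $\ba>p$ the exponent $\frac{\ba}{\ba-p}$ is positive, so $E_{\la}(u_k)^{\frac{\ba}{\ba-p}}\ra 0$, whereas $B(u_k)^{\frac{p}{\ba-p}}\ge m^{\frac{p}{\ba-p}}>0$ stays bounded away from zero; hence $J_{\la}(t(u_k)u_k)\ra 0^{+}$ and $\ds\inf_{u\in\mc N_{\la}^{-}}J_{\la}(u)\le 0$. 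Combining the two bounds gives the claim.

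The step I expect to be the main obstacle is the middle one: guaranteeing that the continuous path $u_{\e}$ actually crosses into $E_{\la}^{+}$ (not merely touches the boundary $E_{\la}^{0}$) and can be sampled arbitrarily close to that boundary while $B(u_{\e})$ remains bounded below by a fixed $m>0$. This is exactly the continuity-plus-intermediate-value mechanism underlying Lemma \ref{les6}, now applied on the positive side of $\eta$; the choice of $h$ with small $L^{\infty}$-norm but large $\|h\|$ is what simultaneously forces $\eta(1)>0$ and preserves the sign and lower bound of $B$.
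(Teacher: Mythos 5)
Your proposal is correct and follows essentially the same route as the paper: the paper also fixes $u\in E_{\la}^{-}\cap B^{+}$, perturbs by an $h\in X_0$ with small $L^{\infty}$-norm but large $X_0$-norm, normalizes to get a path $v_{\e}$ crossing from $E_{\la}^{-}$ into $E_{\la}^{+}$ with $B(v_{\e})\geq \frac{1}{2(\|u\|+\|h\|)^{\ba}}\int_{\Om}b(x)|u|^{\ba}dx>0$, extracts a sequence in $E_{\la}^{+}\cap B^{+}$ with $E_{\la}(v_k)\ra 0$, and projects it onto $\mc N_{\la}^{-}$ via $t(v_k)\ra 0$ so that $J_{\la}(t(v_k)v_k)\ra 0$. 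Your only additions are cosmetic: you make the lower bound $\inf_{u\in\mc N_{\la}^{-}}J_{\la}(u)\geq 0$ explicit (the paper records $J_{\la}>0$ on $\mc N_{\la}^{-}$ separately, just before Theorem \ref{tt2}) and you parametrize the intermediate-value step by $\eta(\e_k)=\frac{1}{k}$ rather than leaving the crossing sequence implicit.
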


 \begin{proof}
Let $u\in E_{\la}^{-}\cap B^{+}$ then it is possible to choose $h\in X_0$ with sufficiently small $L^{\infty}$ norm but sufficiently large $X$ norm
so that $\|u+\e h\|^p -\la \int_{\Om} |u+\e h|^{p} dx>0$ and $ \int_{\Om} b(x)|u+\e h|^{\ba} dx> \frac{1}{2} \int_{\Om} b(x) |u|^{\ba} dx$ for any $0\leq \e\leq1$.
Let $v_{\e}=\frac{u+\e h}{\|u+\e h\|}$ then $v_0\in E_{\la}^{-}$, $v_1\in E_{\la}^{+}$ and there exists $\e_0\in(0,1)$ such that
$v_{\e_0}\in E_{\la}^{0}$. Moreover, there exists a sequence $\{v_k\}\in E_{\la}^{+}\cap B^{+}$ $(v_k=v_{\e_k})$ such that $\ds\lim_{k\ra \infty}\left[
\|v_k\|^p  -\la \int_{\Om} |v_k|^{p} dx \right]=0$ and
\[\int_{\Om} b(x)|v_k|^{\ba}dx= \frac{1}{\|u + \e_{k} h\|^{\ba}}\int_{\Om} b(x)|u+\e_{k} h|^{\ba}dx\geq  \frac{1}{2(\|u\|+\| h\|)^{\ba}}\int_{\Om} b(x)|u|^{\ba}dx.\]
Hence \[\lim_{k\ra\infty} t(v_k)=\lim_{k\ra\infty} \left[\frac{\|v_k\|^{p} - \la \int_{\Om}
|v_k|^{p} dx}{\int_{\Om} b(x)|v_k|^{\ba} dx}\right]^{\frac{1}{\ba-p}}=0.\]
Now $t(v_k)v_k\in \mc N_{\la}^{-}$, we have
\[J_{\la}(t(v_k)v_k)= \left(\frac{1}{p}-\frac{1}{\ba}\right)|t(v_k)|^{\ba} \int_{\Om} b(x)|v_k|^{\ba} dx \ra 0.\]
Hence $\ds\inf_{u\in \mc N_{\la}^{-}} J_{\la}(u)=0$. \QED
\end{proof}
%%%%%%%%%%%%%%%%%%%%%%%%%%%%%%%%%%%%%%%%%%%%%%%%%%%%%%%%%%%%%%%%%%%%%
\begin{Corollary}\label{c2}
If $\int_{\Om} b(x)\phi_{1}^{\ba}dx >0$. Then $\ds\inf_{u\in\mc N_{\la}^{-}} J_{\la}(u)=0$ for every $\la>\la_1$.
\end{Corollary}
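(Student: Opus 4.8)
\noi The plan is to reduce the statement directly to Lemma \ref{ttt2}, which asserts that $\ds\inf_{u\in \mc N_{\la}^{-}}J_{\la}(u)=0$ whenever $E_{\la}^{-}\cap B^{+}\ne\emptyset$. Thus it suffices to exhibit a single normalized function lying in both $E_{\la}^{-}$ and $B^{+}$, and the natural candidate is the first eigenfunction $\phi_1$.

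\noi First I would normalize $\phi_1$ so that $\|\phi_1\|=1$; since the conditions defining $E_{\la}^{-}$ and $B^{+}$ depend only on the signs of $E_{\la}(\phi_1)$ and $B(\phi_1)$, and these signs are unchanged by positive scaling, this normalization is harmless. Next, using the eigenvalue identity
\[\int_{Q}|\phi_1(x)-\phi_1(y)|^{p}K(x-y)dxdy-\la\int_{\Om}|\phi_1|^{p}dx=(\la_1-\la)\int_{\Om}|\phi_1|^{p}dx,\]
which is strictly negative for $\la>\la_1$, I conclude $E_{\la}(\phi_1)<0$, i.e. $\phi_1\in E_{\la}^{-}$. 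The hypothesis $\int_{\Om}b(x)\phi_{1}^{\ba}dx>0$ says precisely that $B(\phi_1)>0$, so $\phi_1\in B^{+}$.

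\noi Consequently $\phi_1\in E_{\la}^{-}\cap B^{+}$, so this set is non-empty for every $\la>\la_1$, and Lemma \ref{ttt2} yields $\ds\inf_{u\in \mc N_{\la}^{-}}J_{\la}(u)=0$. There is essentially no obstacle here: the real content of the argument is entirely packaged inside Lemma \ref{ttt2}, whose proof builds, via a perturbation $u+\e h$ with arbitrarily small $L^{\infty}$ norm but large $X_0$ norm, a sequence in $\mc N_{\la}^{-}$ along which $J_{\la}\to 0$. The only point requiring any care is confirming that the normalization of $\phi_1$ is compatible with the constraint $\|u\|=1$ built into the definitions of $E_{\la}^{-}$ and $B^{+}$.
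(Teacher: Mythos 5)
Your proof is correct and follows essentially the same route as the paper: the paper's proof likewise observes that $\phi_1\in E_{\la}^{-}$ for $\la>\la_1$ and that the hypothesis puts $\phi_1\in B^{+}$, then invokes Lemma \ref{ttt2}. Your additional remark about normalizing $\phi_1$ (harmless since the signs of $E_{\la}(\cdot)$ and $B(\cdot)$ are scale-invariant) is a reasonable point of care that the paper leaves implicit.
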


\begin{proof}
We know that for $\la>\la_1$, $\phi_1\in E^{-}_{\la}$, so $E_{\la}^{-}\cap B^{+}\not= \emptyset$. Hence by Theorem \ref{ttt2}, $\ds\inf_{u\in\mc N_{\la}^{-}} J_{\la}(u)=0$.\QED
 \end{proof}
 \noi {\bf Proof of Theorem \ref{sp3}:} Corollary \ref{c2} completes the proof of Theorem \ref{sp3}.\\

%%%%%%%%%%%%%%%%%%%%%%%%%%%%%%%%%%%%%%%%%%%%%%%%%%%%%%%%%%%%%%%%%%%%%%%%%%%%%%%%%%%%%%%%%%%%%%%%%%%%%%%%%%
\noi The next result follow the similar result without any assumption but with the large $\la$.
\begin{Lemma}
 There exists $\tilde{\la}$ such that $\ds\inf_{u\in\mc N_{\la}^{-}} J_{\la}(u)=0$ for every $\la>\tilde{\la}$.
\end{Lemma}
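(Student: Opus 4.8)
The plan is to exhibit, for all sufficiently large $\la$, an element of $E_{\la}^{-}\cap B^{+}$ and then invoke Lemma \ref{ttt2}, which already gives $\ds\inf_{u\in\mc N_{\la}^{-}}J_{\la}(u)=0$ whenever $E_{\la}^{-}\cap B^{+}\ne\emptyset$. The natural candidate is the principal eigenfunction $\phi_b$ of $-\mc L_K$ on the subdomain $\Om^{+}=\{x\in\Om:\,b(x)>0\}$, already introduced in Lemma \ref{le5}$(ii)$, where $\la_b$ denotes the corresponding principal eigenvalue. Extending $\phi_b$ by zero outside $\Om^{+}$ we have $\phi_b\in X_0$, and we may normalize so that $\|\phi_b\|=1$.

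First I would record the Rayleigh-quotient characterization of $\la_b$. Since $\phi_b$ vanishes outside $\Om^{+}\subset\Om$, its $X_0$-energy computed over $Q$ coincides with the Dirichlet energy defining $\la_b$ on $\Om^{+}$, so that
\[\la_b=\frac{\|\phi_b\|^{p}}{\int_{\Om^{+}}|\phi_b|^{p}dx}=\frac{1}{\int_{\Om}|\phi_b|^{p}dx},\]
the last equality again because $\phi_b=0$ on $\Om\setminus\Om^{+}$. Consequently
\[E_{\la}(\phi_b)=\|\phi_b\|^{p}-\la\int_{\Om}|\phi_b|^{p}dx=1-\frac{\la}{\la_b},\]
which is strictly negative as soon as $\la>\la_b$; hence $\phi_b\in E_{\la}^{-}$. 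On the other hand, being the principal eigenfunction on $\Om^{+}$, $\phi_b$ is positive on $\Om^{+}$ and supported there, so
\[B(\phi_b)=\int_{\Om}b(x)|\phi_b|^{\ba}dx=\int_{\Om^{+}}b(x)|\phi_b|^{\ba}dx>0,\]
because $b>0$ on $\Om^{+}$ and $\phi_b\not\equiv0$. Thus $\phi_b\in B^{+}$ as well, and therefore $\phi_b\in E_{\la}^{-}\cap B^{+}$ for every $\la>\la_b$.

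With this, setting $\tilde{\la}=\la_b$, for each $\la>\tilde{\la}$ the set $E_{\la}^{-}\cap B^{+}$ is non-empty, so Lemma \ref{ttt2} applies directly and yields $\ds\inf_{u\in\mc N_{\la}^{-}}J_{\la}(u)=0$, as claimed. Note that, in contrast to Corollary \ref{c2}, no sign condition on $\int_{\Om}b(x)\phi_{1}^{\ba}dx$ is needed here, since $\la_b$ and $\phi_b$ are available regardless.

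I expect the only delicate point to be the justification that the $X_0$-energy of $\phi_b$ over $Q$ equals the energy defining $\la_b$ on $\Om^{+}$, i.e. that passing to the smaller domain $\Om^{+}$ leaves the quadratic-form value unchanged once $\phi_b$ is extended by zero; this rests on the compatibility of the non-local Dirichlet condition $u=0$ on $\mb R^{n}\setminus\Om^{+}$ with $u=0$ on $\mb R^{n}\setminus\Om$ (so that the integrand vanishes on $\mc C\Om^{+}\times\mc C\Om^{+}$), together with the existence, positivity, and simplicity of the principal eigenpair $(\la_b,\phi_b)$ on $\Om^{+}$, which follows exactly as for $\la_1$ (cf. \cite{gf}).
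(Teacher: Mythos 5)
Your proof is correct and rests on the same key lemma as the paper's: both arguments reduce the claim to showing $E_{\la}^{-}\cap B^{+}\neq\emptyset$ for all large $\la$ and then invoke Lemma \ref{ttt2}. The difference lies entirely in the choice of witness. The paper argues more elementarily: fix any $u\in X_0$ with $\int_{\Om}b(x)|u|^{\ba}dx>0$ (such $u$ exists, for example any nonzero function supported in $\Om^{+}$, since $b$ is continuous and sign-changing), and take $\tilde{\la}>\|u\|^{p}/\int_{\Om}|u|^{p}dx$; then $u/\|u\|\in E_{\la}^{-}\cap B^{+}$ whenever $\la>\tilde{\la}$, and Lemma \ref{ttt2} finishes the proof with no eigenvalue theory at all. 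Your witness is instead the principal eigenfunction $\phi_b$ on $\Om^{+}$ borrowed from Lemma \ref{le5}$(ii)$, and your verifications are sound: for functions vanishing off $\Om^{+}$ the energy over $Q$ coincides with the energy over $Q^{+}=\mb R^{2n}\setminus(\mc C\Om^{+}\times\mc C\Om^{+})$, because the integrand vanishes when both variables lie in $\mc C\Om^{+}$; hence $E_{\la}(\phi_b)=1-\la/\la_b<0$ for $\la>\la_b$, while $B(\phi_b)>0$ since $b>0$ on $\Om^{+}$ and $\phi_b\not\equiv 0$ (positivity and simplicity of $\phi_b$ are not actually needed for this last point). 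What your route buys is an explicit, $u$-independent threshold $\tilde{\la}=\la_b$, consistent with the sublinear non-existence Lemma \ref{le5}; what it costs is the burden of justifying the existence of the eigenpair $(\la_b,\phi_b)$ in $X_0$ on the open set $\Om^{+}=\{x\in\Om : b(x)>0\}$, which need not have smooth boundary --- a point the paper itself glosses over when introducing $\la_b$, and which you rightly flag as the one delicate step. The paper's choice of witness avoids that machinery entirely, at the price of an implicit $\tilde{\la}$.
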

\begin{proof}
Let $ u\in X_0$ such that $\int_{\Om} b(x)|u|^{\ba}dx >0$. Then choose $\tilde{\la}$ sufficiently large so that $\|u\|^p -\la \int_{\Om} |u|^{p} dx <0$
whenever $\la>\tilde{\la}$. Thus for $\la>\tilde{\la}$, $u\in E_{\la}^{-}\cap B^{+}$ and hence the result follows from Theorem \ref{ttt2}.\QED
\end{proof}

 %%%%%%%%%%%%%%%%%%%%%%%%%%%%%%%%%%%%%%%%%%%%%%%%%%%%%%%%%%%%%%%%%%%%%%%%%%%%%%%%%%%%%%%%%%%%%%%%%%%%%%%%%%
\noi Finally, we show that $J_\la$ is unbounded below on $\mc N_{\la}^{+}$ where $\la$ is sufficiently large.

 \begin{Theorem}
If $E_{\la}^{-}\cap B^{0}\not= \emptyset$, then $J_{\la}(u)$ is unbounded on $\mc N_{\la}^{+}$.
\end{Theorem}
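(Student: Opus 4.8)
The plan is to read off the theorem from the fibering map picture of the $p$-superlinear case together with the closed form of $J_\la$ on $\mc N_\la$. Recall from Case~4 that every $u\in E_\la^{-}\cap B^{-}$ produces, via its fiber map, a point $t(u)u\in\mc N_\la^{+}$ with $t(u)=\left[E_\la(u)/B(u)\right]^{1/(\ba-p)}$, and that on $\mc N_\la$ one has $J_\la(w)=\left(\frac1p-\frac1\ba\right)\left(\|w\|^{p}-\la\int_\Om|w|^{p}\,dx\right)$. Substituting $w=t(u)u$ for $u$ with $\|u\|=1$ and simplifying gives
\[
J_\la(t(u)u)=-\left(\frac1p-\frac1\ba\right)\frac{|E_\la(u)|^{\frac{\ba}{\ba-p}}}{|B(u)|^{\frac{p}{\ba-p}}},
\]
which is negative because $\ba>p$. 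Hence it suffices to exhibit a sequence $\{u_k\}\subseteq E_\la^{-}\cap B^{-}$ with $\|u_k\|=1$ along which $|E_\la(u_k)|$ stays bounded away from $0$ while $B(u_k)\to0^{-}$; the displayed identity then forces $J_\la(t(u_k)u_k)\to-\infty$, and since each $t(u_k)u_k\in\mc N_\la^{+}$ this proves the claim.

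To build the sequence I would start from a fixed $u\in E_\la^{-}\cap B^{0}$ furnished by the hypothesis, so that $\|u\|=1$, $E_\la(u)<0$ and $B(u)=0$, and look for a direction $h\in X_0$ that pushes $B$ strictly negative. If the Gateaux derivative of $B$ at $u$ is nonzero, I would pick $h\in X_0$ with $\int_\Om b(x)|u|^{\ba-2}u\,h\,dx<0$, whence $B(u+\e h)=\e\int_\Om b|u|^{\ba-2}u\,h\,dx+o(\e)<0$ for small $\e>0$. In the degenerate case $b|u|^{\ba-2}u\equiv0$ a.e. (equivalently $b|u|^{\ba}\equiv0$ a.e.), I would instead take $0\le h\in X_0$, $h\not\equiv0$, supported in $\Om^{-}:=\{x\in\Om:b(x)<0\}$ (a nonempty open set since $b$ is sign-changing and continuous); then $B(u+\e h)=\int_{\text{supp}\,h}b\,|u+\e h|^{\ba}\,dx<0$ for small $\e>0$.

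In either case set $u_\e:=(u+\e h)/\|u+\e h\|$. Since $\e\mapsto u+\e h$ is continuous into $X_0$, hence into $L^{\ba}(\Om)$, and $b$ is bounded, the maps $\e\mapsto\|u+\e h\|$ and $\e\mapsto B(u+\e h)$ are continuous; therefore $u_\e\to u$ in $X_0$ as $\e\to0^{+}$, so $E_\la(u_\e)\to E_\la(u)<0$ and $B(u_\e)=\|u+\e h\|^{-\ba}B(u+\e h)\to0^{-}$. Choosing $\e_k\to0^{+}$ and $u_k:=u_{\e_k}$ gives $u_k\in E_\la^{-}\cap B^{-}$ with $E_\la(u_k)\le\frac12E_\la(u)<0$ for large $k$ and $B(u_k)\to0^{-}$, exactly the sequence required above.

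The main obstacle is the perturbation step: one must be sure that $u$ can be moved from $B^{0}$ strictly into $B^{-}$ by an arbitrarily small displacement, which is why the degenerate case $b|u|^{\ba-2}u\equiv0$ has to be isolated and handled by concentrating the perturbation in $\Om^{-}$. Once a direction $h$ with $B(u+\e h)<0$ (for small $\e>0$) is secured, the remainder---continuity of the normalization and the closed-form evaluation of $J_\la$ on $\mc N_\la^{+}$---is routine.
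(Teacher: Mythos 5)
Your proof is correct and follows essentially the same route as the paper: perturb a point $u\in E_{\la}^{-}\cap B^{0}$ into $E_{\la}^{-}\cap B^{-}$ so that $B\to 0^{-}$ while $E_{\la}$ stays bounded away from zero, then note that the corresponding points $t(u_k)u_k\in\mc N_{\la}^{+}$ satisfy $J_{\la}(t(u_k)u_k)\to-\infty$. If anything, your two-case perturbation (nonvanishing Gateaux derivative of $B$ at $u$ versus the degenerate case $b|u|^{\ba}\equiv 0$ a.e., handled by a bump supported in $\{x\in\Om: b(x)<0\}$) is more careful than the paper's informal prescription of ``decreasing $u$ slightly in $\{x\in\Om: b(x)>0\}$'', which breaks down exactly in that degenerate case.
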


 \begin{proof}
Let $u\in E_{\la}^{-}\cap B^{0}$. Then by decreasing $u$ slightly in $\{x\in \Om: b(x)>0\}$, for given $\e>0$, we can find $v\in X_0$ with $\|v\|=1$ such that
$\|u-v\|<\e$, $ -\e<\int_{\Om} b(x)|v|^{\ba} dx<0$ and $\|v\|^p -\la \int_{\Om} |v|^{p} dx<\frac{1}{2}(\|u\|^p -\la \int_{\Om} |u|^{p} dx).$
 Therefore there exist $\de>0$ and a sequence $\{v_k\}\in E_{\la}^{-}\cap B^{-}$ such that
$\|v_k\|^p  -\la \int_{\Om} |v_k|^{p} dx<-\de \;\mbox{and}\;\ds\lim_{k\ra \infty} \int_{\Om} b(x)|v_k|^{\ba}dx \ra 0.$
Hence \[\lim_{k\ra \infty} t(v_k)=\ds\lim_{k\ra\infty} \left[\frac{\|v_k\|^{p} - \la \int_{\Om}
|v_k|^{p} dx}{\int_{\Om} b(x)|v_k|^{\ba} dx}\right]^{\frac{1}{\ba-p}}= \infty.\]
Now $t(v_k)v_k\in \mc N_{\la}^{+}$, we have
\[J_{\la}(t(v_k)v_k)= \left(\frac{1}{p}-\frac{1}{\ba}\right)|t(v_k)|^{p} \left[\|v_k\|^p- \la\int_{\Om} |v_k|^{p} dx\right] \leq \left(\frac{1}{p}-\frac{1}{\ba}\right)|t(v_k)|^{p} (-\de)\ra -\infty \]
as $k\ra\infty$ and so $J_{\la}(u)$ is not bounded below on $\mc N_{\la}^{+}$. \QED
\end{proof}
%%%%%%%%%%%%%%%%%%%%%%%%%%%%%%%%%%%%%%%%%%%%%%%%%%%%%%%%%%%%%%%%%%%%%%%%%%%%%%%%%%
\begin{Corollary}
 There exists $\hat{\la}$ such that $J_{\la}(u)$ is unbounded below on $\mc N_{\la}^{+}$ whenever $\la>\hat{\la}$.
\end{Corollary}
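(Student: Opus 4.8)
The plan is to derive the Corollary directly from the preceding Theorem, which already shows that $J_{\la}$ is unbounded below on $\mc N_{\la}^{+}$ whenever $E_{\la}^{-}\cap B^{0}\ne\emptyset$. So the whole task reduces to producing a threshold $\hat{\la}$ for which this intersection is nonempty for every $\la>\hat{\la}$. The guiding observation is that the condition $u\in B^{0}$ is independent of $\la$, so I will fix one function lying in $B^{0}$ and only need to push it into $E_{\la}^{-}$ by taking $\la$ large.

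First I would construct a fixed nonzero $u_0\in X_0$ with $\int_{\Om}b(x)|u_0|^{\ba}\,dx=0$, exploiting that $b$ is sign-changing. Since $\Om^{+}=\{b>0\}$ and $\Om^{-}=\{b<0\}$ are nonempty open subsets of $\Om$, I pick nonzero $\phi\in C_c^{\infty}(\Om^{+})$ and $\psi\in C_c^{\infty}(\Om^{-})$; both belong to $X_0$ and have disjoint supports. Setting $A=\int_{\Om}b|\phi|^{\ba}\,dx>0$ and $C=-\int_{\Om}b|\psi|^{\ba}\,dx>0$, disjointness of supports yields, for $s,t\ge 0$,
\[\int_{\Om}b(x)\,|s\phi+t\psi|^{\ba}\,dx = s^{\ba}A - t^{\ba}C,\]
and a choice such as $s=C^{1/\ba}$, $t=A^{1/\ba}$ makes the right-hand side vanish. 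Then $\tilde u:=s\phi+t\psi\ne 0$ satisfies $B(\tilde u)=0$, and since $B$ is homogeneous of degree $\ba$ the normalized function $u_0:=\tilde u/\|\tilde u\|$ satisfies $\|u_0\|=1$ and $u_0\in B^{0}$.

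Next, since $u_0\ne 0$ I set $c:=\int_{\Om}|u_0|^{p}\,dx>0$ and $\hat{\la}:=1/c$. For every $\la>\hat{\la}$, using $\|u_0\|=1$,
\[E_{\la}(u_0)=\|u_0\|^{p}-\la\int_{\Om}|u_0|^{p}\,dx = 1-\la c < 0,\]
so $u_0\in E_{\la}^{-}$; combined with $u_0\in B^{0}$ this gives $u_0\in E_{\la}^{-}\cap B^{0}$, so the hypothesis of the preceding Theorem holds for every $\la>\hat{\la}$ and the conclusion follows.

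I do not expect a genuine obstacle here: the argument is elementary once the sign-changing hypothesis is used. The single delicate point is securing a nonzero element of $B^{0}$, which the disjointly supported bumps above provide; alternatively one can note that $B$ is continuous and sign-changing on the (connected) unit sphere of $X_0$, so that $B^{0}\ne\emptyset$ by the intermediate value theorem.
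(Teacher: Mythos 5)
Your proof is correct and takes essentially the same route as the paper: fix a unit-norm $u\in B^{0}$, choose $\hat{\la}$ so large that $E_{\la}(u)=1-\la\int_{\Om}|u|^{p}\,dx<0$ for all $\la>\hat{\la}$, and invoke the preceding Theorem via $u\in E_{\la}^{-}\cap B^{0}$. The only difference is that you explicitly construct the element of $B^{0}$ (disjointly supported bumps in $\{b>0\}$ and $\{b<0\}$, scaled so the two contributions cancel), whereas the paper's proof simply posits such a $u$; this is a welcome but minor completion of a step the paper leaves implicit.
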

\begin{proof}
Let $ u\in X_0$ with $\|u\|=1$ and $\int_{\Om} b(x)|u|^{\ba}dx =0$. Choose $\hat{\la}$ sufficiently large so that $\|u\|^p -\la \int_{\Om} |u|^{p} dx <0$
whenever $\la>\hat{\la}$. Thus for $\la>\hat{\la}$, $u\in E_{\la}^{-}\cap B^{0}$ and hence the proof is complete.\QED
\end{proof}
{\bf Acknowledgements:} Authors thank the reviewer for his/her thorough review and highly appreciate the comments and
suggestions, which significantly contributed to improving this article.
%%%%%%%%%%%%%%%%%%%%%%%%%%%%%%%%%%%%%%%%%%%%%%%%%%%%%%%%%%%%%%%%%%%%%%%%%%%%%%%%%%%%%%%%%%%%%%%%%%%%%%%%%%%%5

%%%%%%%%%%%%%%%%%%%%%%%%%%%%%%%%%%%%%%%%%%%%%%%%%%%%%%%%%%%%%%%%%%%%%%%%%%%%

\end{document}